\title{A comparison of various analytic choice principles}
\author{Paul-Elliot Angl\`es d'Auriac}
\address{LACL, D\'epartement d'Informatique\\
Facult\'e des Sciences et Technologie\\
61 avenue du G\'en\'eral de Gaulle\\
94010 Cr\'eteil Cedex, France}
\email{panglesd@lacl.fr}
\thanks{Angl\`es d'Auriac would like to thank the JSPS, as
  the paper was prepared during Summer Program of the Japan Society
  for the Promotion of Science.}
\author{Takayuki Kihara}
\address{Department of Mathematical Informatics\\
  Graduate School of Informatics\\
  Nagoya University, Japan}
\email{kihara@i.nagoya-u.ac.jp}
\thanks{Kihara's research was partially supported by JSPS KAKENHI
  Grant 17H06738, 15H03634, and the JSPS Core-to-Core Program
  (A. Advanced Research Networks).}
\newtheorem{Theorem}{Theorem}[section]
\newtheorem{Proposition}[Theorem]{Proposition}
\newtheorem{Corollary}[Theorem]{Corollary}
\newtheorem{Lemma}[Theorem]{Lemma}
\theoremstyle{definition}
\newtheorem{Definition}[Theorem]{Definition}
\newtheorem{Fact}[Theorem]{Fact}
\newtheorem{Obs}[Theorem]{Observation}
\newtheorem*{Notation}{Notation}
\newtheorem*{Ack}{Acknowledgements}
\newtheorem{Question}[Theorem]{Question}
\newcommand{\rest}[1]{\! \upharpoonright_{#1}} 
\newcommand\Nb{\mathbb{N}}
\newcommand\dom{\mathrm{dom}}
\newcommand\omCK{\omega_1^{\mathrm{CK}}}
\newcommand\Baire{{\omega^{\omega}}}
\newcommand\baire{{\omega^{<\omega}}}
\newcommand\Cantor{{2^{\omega}}}
\newcommand\sW{_{\rm sW}}
\newcommand\W{_{\rm W}}
\newcommand\N{\mathbb{N}}
\newcommand\upto{\upharpoonright}
\newcommand\ac[1]{\Sigma^1_1\mbox{-}{\sf AC}^{\sf #1}_{\N^\N}}
\newcommand\dc[1]{\Sigma^1_1\mbox{-}{\sf DC}^{\sf #1}_{\N^\N}}
\newcommand{\pcolon}{\colon\!\!\!\subseteq}
\newcommand\om{\omega}
\newcommand{\fr}{\mbox{}^\smallfrown}
\begin{document}

\begin{abstract}
We investigate computability theoretic and descriptive set theoretic contents of various kinds of analytic choice principles by performing detailed analysis of the Medvedev lattice of $\Sigma^1_1$-closed sets.
Among others, we solve an open problem on the Weihrauch degree of the parallelization of the $\Sigma^1_1$-choice principle on the integers.
Harrington's unpublished result on a jump hierarchy along a pseudo-well-ordering plays a key role in solving the problem.
\end{abstract}

\maketitle

\section{Introduction}

\subsection{Summary}

The study of the Weihrauch lattice aims to measure the computability theoretic difficulty of finding a choice function witnessing the truth of a given $\forall\exists$-theorem (cf.\ \cite{pauly-handbook}) as an analogue of reverse mathematics \cite{SOSOA:Simpson}.
In this article, we investigate the uniform computational contents of the axiom of choice $\Sigma^1_1$-{\sf AC} and dependent choice $\Sigma^1_1$-{\sf DC} for $\Sigma^1_1$ formulas in the context of the Weihrauch lattice.

The computability-theoretic strength of these choice principles is completely independent of their proof-theoretic strength, since the meaning of an impredicative notion such as $\Sigma^1_1$ is quite unstable among models of second-order arithmetic.
Nevertheless, it is still interesting to examine the uniform computational contents of $\Sigma^1_1$-{\sf AC} and $\Sigma^1_1$-{\sf DC} in the full model $\mathcal{P}\N$:
In descriptive set theory, we do not consider the complexity of points in spaces.
Instead, we consider the descriptive or topological complexity of sets and functions on spaces as described below.

For a set $A\subseteq X\times Y$ define the {\em $x$-th section of $A$} as $A(x)=\{y\in Y:(x,y)\in A\}$.
Moreover, we say that a set is {\em total} if all of its sections are nonempty.
We say that a partial function $g\pcolon X\to Y$ is a {\em choice function} for $A$ if $g(x)$ is defined and $g(x)\in A(x)$ whenever $A(x)$ is nonempty.
In descriptive set theory and related areas, there are a number of important results on measuring the complexity of choice functions.
Let $X$ and $Y$ be standard Borel spaces.
The Jankov-von Neumann uniformization theorem (cf.\ \cite[Theorem 18.1]{KechrisBook}) states that if $A$ is analytic, then there is a choice function for $A$ which is measurable w.r.t.\ the $\sigma$-algebra generated by the analytic sets.
The Luzin-Novikov uniformization theorem (cf.\ \cite[Theorem 18.10]{KechrisBook}) states that if $A$ is Borel each of whose section is at most countable, then there is a Borel-measurable choice function for $A$.
Later, Arsenin and Kunugui (cf.\ \cite[Theorem 35.46]{KechrisBook}) showed that the same holds even if each section is allowed to be $\sigma$-compact.

A set $H\subseteq Z^Y$ is {\em homogeneous} if $H$ is the set of all total choice functions for some $A\subseteq Y\times Z$.
A choice function for a set with homogeneous sections can be thought of as a choice of a choice function.
The fact that the coanalytic sets do not have the separation property can be used to conclude that an analytic set with compact homogeneous sections does not necessarily have a Borel-measurable choice.
Nevertheless, a set with homogeneous sections is sometimes easier to uniformize than a general set.
For instance, a coanalytic subset of $X\times \om^\om$ with homogeneous sections always have a Borel-measurable choice, whereas there is no complexity bound within $\mathbf{\Delta}^1_2$ which has a power to uniformize a coanalytic set even if assuming that every section is a singleton.

We are interested in comparing the difficulty of finding choice functions for various analytic sets.
Our main tools for comparing the degrees of difficulty are the following preorderings on analytic sets in product spaces.
Let $A\subseteq X\times Y$ and $B\subseteq Z\times W$ be given.
\begin{enumerate}
\item We write $A\leq_1 B$ if there exist continuous functions $h\colon X\to W$ and $k:Z\to Y$ such that $k\circ g\circ h$ is a choice for $A$ whenever $g$ is a choice for $B$.
\item We write $A\leq_2 B$ if there exist continuous functions $h\colon X\to W$ and $k\colon Z\to Y$ such that $x\mapsto k(x,g\circ h(x))$ is a choice for $A$ whenever $g$ is a choice for $B$.
\end{enumerate}

It is clear that $A\leq_1B$ always implies $A\leq_2B$, but the converse does not hold in general.
Note that $\leq_0$ usually refers the Wadge reducibility, and the two preorderings $\leq_1$ and $\leq_2$ are topological versions of two reducibility notions $\leq\sW$ and $\leq\W$ introduced in Section \ref{sec:prelim}.

\begin{Fact}[Kihara-Marcone-Pauly \cite{KMP}]
For any total analytic set $A\subseteq\om^\om\times 2^\om$, there exists a total analytic set $H\subseteq\om^\om\times 2^\om$ with homogeneous sections such that $A\leq_1 H$.

However, there exists a total analytic set $A\subseteq\om^\om\times \om^\om$ with homogeneous sections such that $A\not\leq_2 B$ for any total analytic set $B\subseteq\om^\om\times \om^\om$ with compact sections.
\end{Fact}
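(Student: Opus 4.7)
The plan is to handle the two halves separately. For the first (positive) half, given total analytic $A \subseteq \om^\om \times 2^\om$, I aim to construct a total analytic $H \subseteq \om^\om \times 2^\om$ with homogeneous sections together with continuous $h$ and $k$ witnessing $A \leq_1 H$, i.e., such that $k(H(h(x))) \subseteq A(x)$ for every $x$. Starting from a representation $A(x) = p[T_x]$ with $T_x$ a tree on $2 \times \om$ obtained continuously from $x$, the naive attempt of letting $H(h(x))$ be the coordinate-wise hull $\prod_n \{f(n) : f \in A(x)\}$ is too coarse: a free coordinate in $H$ corresponds to arbitrary $\{0,1\}$ values, and a single decoder cannot extract an element of $A(x)$ from such arbitrary completions. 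The remedy I propose is to encode each candidate branch of $T_x$ redundantly across many coordinates so that every consistent selection still decodes to a valid branch. Concretely, using an embedding $\om^\om \hookrightarrow 2^\om$ via unary separator codes, one designs $H(h(x))$ as a product set whose forced coordinates describe $T_x$ and whose free coordinates range over encodings of the witness parameter $z$ in branches $(y,z) \in [T_x]$, so that the fixed $k$ reads off the first coordinate $y$ from any such consistent decoding.

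For the second (negative) half, the plan is a diagonalization. I will build a total analytic $A \subseteq \om^\om \times \om^\om$ with homogeneous sections that realizes the parallelization of $\Sigma^1_1$-choice on $\om$: take $A(x) = \prod_n A_n(x)$ with each $A_n(x) \subseteq \om$ a nonempty analytic set, chosen diagonally. The key tool is a Borel-uniformization theorem for compact-sectioned analytic sets (in the spirit of the Arsenin-Kunugui theorem cited in the introduction): any suitable analytic $B$ with compact sections admits a Borel-measurable uniformization. Consequently, if $A \leq_2 B$ held for some compact-sectioned analytic $B$, composing with the Borel uniformization would produce a Borel-measurable choice function $\gamma$ for $A$, giving $\gamma(x)(n) \in A_n(x)$ for every $x,n$. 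The diagonalization then chooses $A_n(x)$ to be a nonempty analytic subset of $\om$ specifically designed to defeat the $n$-th Borel code, parametrizing candidate choices by a universal analytic set and enumerating Borel codes.

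The main obstacle in the positive half is ensuring that $k$ produces a legitimate element of $A(x)$ no matter which element of the product set $H(h(x))$ is supplied: since $g$ may be an arbitrary choice function for $H$, the encoding must tolerate arbitrary values on every free coordinate. The redundancy likely involves repeating the witness $z$ infinitely often across independent coordinate blocks, so that at least one block is usable and $k$ can identify it. The main obstacle in the negative half is that $\leq_2$ reductions give $k$ access to $x$ itself, enlarging the class of reductions substantially beyond the $\leq_1$ case; the diagonalization must then run against all triples $(h,k,B)$, and the Borel code of the uniformization of $B$ provides the countable parameter needed to make the enumeration feasible.
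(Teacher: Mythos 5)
This statement is quoted as a \emph{Fact} from \cite{KMP}; the paper itself gives no proof, so your attempt can only be judged against the known arguments (whose ingredients do appear later in the paper, e.g.\ Proposition \ref{prop:compact-choice-total-DNC}, Lemma \ref{lemma:higher-hyperimmunefree}, and the cited domination Fact for $\ac{cof}$). Measured against those, both halves of your proposal have genuine gaps.

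For the positive half, the redundancy scheme does not close the circle. A homogeneous section of $2^\om$ is a product $\prod_n S_n$ with $S_n\subseteq\{0,1\}$, so every coordinate is either fully forced or fully adversarial. A block of free coordinates can never be ``usable'': the choice function fills it with garbage, and any marker telling $k$ which block to trust must itself be forced. But forcing a block to encode an actual witness $z$ with $(y,z)\in[T_x]$ amounts to selecting a witness by a single-valued assignment whose graph is simultaneously analytic and co-analytic in the relevant sense --- i.e., it presupposes a solution to the very choice problem being reduced. The actual mechanism (visible in Proposition \ref{prop:compact-choice-total-DNC}) is different: one encodes a \emph{partial} $\Pi^1_1$ function, leaving a coordinate free exactly where the $\Sigma^1_1$ case distinction is ambiguous, and designs $k$ so that \emph{any} value on an ambiguous coordinate is harmless (e.g.\ via the reduction property of $\Pi^1_1$, deciding which immediate extension of a node still meets $A(x)$). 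Even then, the naive branch-following decoder only lands in the closure of $A(x)$, so the non-closed analytic case needs an additional argument; your sketch does not engage with either point.

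For the negative half, the key tool you invoke is false, and the paper says so explicitly in two places: ``an analytic set with compact homogeneous sections does not necessarily have a Borel-measurable choice'' (introduction) and ``even a compact $\Sigma^1_1$-choice does not admit a Borel uniformization'' (Section \ref{subsection:compact-choice}). The Arsenin--Kunugui/Saint Raymond theorem applies to \emph{Borel} sets with ($\sigma$-)compact sections, not to analytic ones, so there is no Borel uniformization to compose with and the whole diagonalization strategy collapses. The correct separation is a basis-versus-domination argument: every $\Sigma^1_1$ compact set has a member $p$ such that every function obtained from $p$ by a higher (i.e.\ $\Pi^1_1$-continuous) reduction is majorized by a $\Delta^1_1$ function (Lemma \ref{lemma:higher-hyperimmunefree}), whereas there is a total analytic set with homogeneous sections --- built from $\Pi^1_1\mbox{-}{\sf DNC}_\N$-style cofinite sections, as in the Fact cited for $\ac{cof}$ --- every element of which computes a function dominating all $\Delta^1_1$ functions. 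Relativizing this over all $x$ defeats every $\leq_2$-reduction to a compact-sectioned analytic $B$.
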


\begin{Question}[Brattka et al.\ \cite{paulybrattka5} and Kihara et al.\ \cite{KMP}]\label{que:Borel-choice}
For any total analytic set $A\subseteq\om^\om\times\om^\om$, does there exist a total analytic set $H\subseteq\om^\om\times\om^\om$ with homogeneous sections such that $A\leq_2 H$?
\end{Question}

In this article, we compare the complexity of choice principles for various kinds of analytic sets, that is, analytic sets with compact sections, $\sigma$-compact sections, homogeneous sections, and so on.
In particular, we negatively solve Question \ref{que:Borel-choice}.

To solve this question, we will employ the notion of a pseudo-hierarchy:
A remarkable discovery by Harrison is that some {\em non}-well-ordering $\prec$ admits a transfinite recursion based on an arithmetical formula.
Furthermore, a basic observation is that, without deciding if a given countable linear ordering $\prec$ is well-ordered or not, one can either proceed an arithmetical transfinite recursion along $\prec$ or construct an infinite $\prec$-decreasing sequence.
Indeed, we will see that the degree of difficulty of such a construction is quite close to that of uniformizing analytic sets with compact sections, which is drastically easier than deciding well-orderedness of a countable linear ordering.

\subsection{Preliminaries}\label{sec:prelim}

In this article, we investigate several variants of $\Sigma^1_1$-choice principles in the context of the Weihrauch lattice.
The notion of Weihrauch degree is used as a tool to classify certain $\forall\exists$-statements by identifying $\forall\exists$-statements with a partial multivalued function.
Informally speaking, a (possibly false) statement $S\equiv\forall x\in X\;[Q(x)\;\rightarrow\;\exists x\;P(x,y)]$ is transformed into a partial multivalued function $f\pcolon X\rightrightarrows Y$ such that ${\rm dom}(f)=\{x:Q(x)\}$ and $f(x)=\{y:P(x,y)\}$.
Then, measuring the degree of difficulty of witnessing the truth of $S$ is identified with that of finding a choice function for $f$.
Here, we consider choice problems for partial multivalued functions rather than relations in order to distinguish the hardest instance $f(x)=\emptyset$ and the easiest instance $x\in X\setminus{\rm dom}(f)$.

In this article, we only consider subspaces of $\N^\N$, so we can use the following simpler version of the Weihrauch reducibility.
For partial multivalued functions $f,g$, we say that $f$ is {\em Weihrauch reducible to $g$} (written $f\leq_{\sf W}g$) if there are partial computable functions $h,k$ such that $x\mapsto k(x,G\circ h(x))$ is a choice for $f$ whenever $G$ is a choice for $g$.
In other words,
\[(\forall x\in{\rm dom}(f))(\forall y)\;[y\in g(h(x))\implies k(x,y)\in f(x)].\]

In recent years, a lot of researchers has employed this notion to measure uniform computational strength of $\forall\exists$-theorems in analysis as an analogue of reverse mathematics.
Roughly speaking, the study of the Weihrauch lattice can be thought of as ``reverse mathematics plus uniformity minus proof theory.''
But this disregard for proof theory provides us a new insight into the classification of impredicative principles as we see in this article.
For more details on the Weihrauch lattice, we refer the reader to a recent survey article \cite{pauly-handbook}.

We use several operations on the Weihrauch lattice.
Given a partial multivalued function $f$, the {\em parallelization of $f$} is defined as follows:
\[\widehat{f}((x_n)_{n\in\om})=\prod_{n\in\om}f(x_n).\]

If $f\equiv_{\sf W}\hat{f}$, then we say that $f$ is {\em parallelizable}.
Given partial multivalued functions $f$ and $g$, the {\em compositional product of $f$ and $g$} (written $g\star f$) is a function which realizes the greatest Weihrauch degree among $g_0\circ f_0$ for $f_0\leq_{\sf W}f$ and $g_0\leq_{\sf W}g$.
It is known that such an operation $\star$ exists.
For basic properties of parallelization and compositional product, see also \cite{paulybrattka4}.

\section{Equivalence results in the Weihrauch lattice}

\subsection{$\Sigma^1_1$-Choice Principles}\label{section:Sigma-1-1-choice}

One of the main notions in this article is the $\Sigma^1_1$-choice principle.
In the context of the Weihrauch degrees, the $\Sigma^1_1$-choice principle on a space $X$ is formulated as the partial multivalued function which, given a code of a nonempty analytic set $A$, chooses an element of $A$.

We fix a coding system of all analytic sets in a Polish space $X$, and let $S_p$ be the analytic subset of $X$ coded by $p\in\om^\om$.
For instance, let $S_p$ be the projection of the $p$-th closed subset of $X\times \N^\N$ (i.e., the complement of the union of $p(n)$-th basic open balls) into the first coordinate (cf.\ \cite{KMP}).

The {\em $\Sigma^1_1$-choice principle on $X$}, $\Sigma^1_1$-${\sf C}_X$, is the partial multivalued function which, given a code of a nonempty analytic subset of $X$, chooses one element from $X$.
Formally speaking, it is defined as the following partial multivalued function:
\begin{align*}
{\rm dom}(\Sigma^1_1\mbox{-}{\sf C}_X)&=\{p\in\om^\om:S_p\not=\emptyset\},\\
\Sigma^1_1\mbox{-}{\sf C}_X(p)&=S_p.
\end{align*}

For basics on the $\Sigma^1_1$-choice principle on $X$, see also \cite{KMP}.
In a similar manner, one can also consider the $\Gamma$-choice principle on $X$, $\Gamma$-${\sf C}_X$, for any represented space $X$ and any collection $\Gamma$ of subsets of $X$ endowed with a representation $S_{\ast}\pcolon\om^\om\to\Gamma$.
We first describe how this choice principle is related to several very weak variants of the axiom of choice.

In logic, the {\em axiom of $\Gamma$ choice}, $\Gamma$-{\sf AC}, is known to be the following statement:
\[\forall a\exists b\;\varphi(a,b)\;\longrightarrow\;\exists f\forall a\;\varphi(a,f(a)),\]
where $\varphi$ is a $\Gamma$ formula.
If we require $a\in X$ and $b\in Y$, the above statement is written as $\Gamma$-${\sf AC}_{X\to Y}$.
We examine the complexity of a procedure that, given a $\Sigma^1_1$ formula $\varphi$ (with a parameter) satisfying the premise of $\Sigma^1_1$-${\sf AC}_{X\to Y}$, returns a choice for $\varphi$.
In other words, we interpret $\Sigma^1_1$-${\sf AC}_{X\to Y}$ as the following partial multivalued function:
\begin{align*}
{\rm dom}([\Sigma^1_1\mbox{-}{\sf AC}_{X\to Y}]_{\sf mv})&=\{p\in\om^\om:\forall a\exists b\;\langle a,b\rangle\in S_p\},\\
[\Sigma^1_1\mbox{-}{\sf AC}_{X\to Y}]_{\sf mv}(p)&=\{f\in Y^X:(\forall a)\;\langle a,f(a)\rangle\in S_p\}.
\end{align*}

Unfortunately, this interpretation is different from the usual (relative) realizability interpretation.
However, the above interpretation of $\Sigma^1_1$-${\sf AC}_{X\to \N}$ is related to a descriptive-set-theoretic notion known as the number uniformization property (or equivalently, the generalized reduction property) for $\Sigma^1_1$ (cf.\ \cite[Definition 22.14]{KechrisBook}).
In the context of Weihrauch degrees, the above interpretation is obviously related to the parallelization of the $\Sigma^1_1$-choice principle.

\begin{Obs}\label{obs:choice}
If $X$ is an initial segment of $\N$, then we have $\widehat{\Sigma^1_1\mbox{-}{\sf C}_X}\equiv_{\sf W}[\Sigma^1_1\mbox{-}{\sf AC}_{\N\to X}]_{\sf mv}$.
In particular, ${\sf C}_{\N^\N}\equiv_{\sf W}[\Sigma^1_1\mbox{-}{\sf AC}_{\N\to \N^\N}]_{\sf mv}$.
\qed
\end{Obs}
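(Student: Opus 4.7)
The plan is to establish both inequalities by routine uniform manipulations of the chosen representation of analytic sets as projections of closed subsets of $X\times\Baire$.

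For the reduction $\widehat{\Sigma^1_1\mbox{-}{\sf C}_X}\leq_{\sf W}[\Sigma^1_1\mbox{-}{\sf AC}_{\N\to X}]_{\sf mv}$, given an instance $(p_n)_{n\in\om}$ of the parallelization I would effectively produce a single code $q$ for the analytic set
\[A\;=\;\{\langle n,x\rangle:n\in\om,\;x\in S_{p_n}\}\;\subseteq\;\N\times X.\]
The totality premise $\forall n\exists x\,\langle n,x\rangle\in S_q$ is immediate from the nonemptiness of each $S_{p_n}$. One application of $[\Sigma^1_1\mbox{-}{\sf AC}_{\N\to X}]_{\sf mv}$ then returns a function $f$ with $f(n)\in S_{p_n}$ for every $n$, which is exactly the output required by $\widehat{\Sigma^1_1\mbox{-}{\sf C}_X}$.

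Conversely, given a code $p$ of a total analytic set $A\subseteq\N\times X$, I would slice the underlying closed set in $\N\times X\times\Baire$ at each $n\in\om$ to produce, uniformly in $n$, a code $p_n$ for the (nonempty) section $A(n)\subseteq X$. A single application of $\widehat{\Sigma^1_1\mbox{-}{\sf C}_X}$ to $(p_n)_{n\in\om}$ yields a sequence $(x_n)_{n\in\om}$ with $x_n\in A(n)$, and $n\mapsto x_n$ is the choice function sought for $A$. I do not anticipate a real obstacle: both translations amount to effective pairing/unpairing operations together with amalgamating or slicing a closed set, all of which respect the representation by construction.

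The ``in particular'' clause requires a small additional step because $\Baire$ is not an initial segment of $\N$. Running the same two-way argument with $X=\Baire$ yields
\[\widehat{\Sigma^1_1\mbox{-}{\sf C}_{\Baire}}\;\equiv_{\sf W}\;[\Sigma^1_1\mbox{-}{\sf AC}_{\N\to\Baire}]_{\sf mv}.\]
To finish, I would appeal to the standard identification $\Sigma^1_1\mbox{-}{\sf C}_{\Baire}\equiv_{\sf W}{\sf C}_{\Baire}$, which follows from the homeomorphism $\Baire\times\Baire\cong\Baire$ (an analytic subset of $\Baire$ is the projection of a closed subset of $\Baire$), together with the parallelizability of ${\sf C}_{\Baire}$, giving $\widehat{\Sigma^1_1\mbox{-}{\sf C}_{\Baire}}\equiv_{\sf W}{\sf C}_{\Baire}$.
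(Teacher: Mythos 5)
The paper states this Observation with no proof at all (it ends in \qed), and your argument supplies exactly the routine code-amalgamation and code-slicing that is evidently intended, so it is correct and in line with the paper. Your extra care on the ``in particular'' clause --- passing through $\Sigma^1_1\mbox{-}{\sf C}_{\N^\N}\equiv_{\sf W}{\sf C}_{\N^\N}$ (project a point of the underlying closed set) and the parallelizability of ${\sf C}_{\N^\N}$ --- is needed and matches the remark the authors make at the end of the proof of Proposition~\ref{prop:depecho}.
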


In logic, the {\em axiom of $\Sigma^1_1$-dependent choice on $X$} is the following statement:
\[\forall a\exists b\;\varphi(a,b)\;\longrightarrow\;\forall a\exists f\;[f(0)=a\;\&\;\forall n\;\varphi(f(n),f(n+1))],\]
where $\varphi$ is a $\Sigma^1_1$-formula, and $a$ and $b$ range over $X$.
Note that the dependent choice is equivalent to the statement saying that if $T$ is a definable pruned tree of height $\om$, then there is an infinite path through $T$.
However, this translation may change the logical complexity of a formula $\varphi$ and a tree $T$.
For this reason, we will use the symbol $\Sigma^1_1$-${\sf DC}_X$ to denote the scheme of the $\Sigma^1_1$-dependent choice on any analytic set $Y\subseteq X$ instead of considering a single space $X$.
Then we examine the complexity of a procedure that, given a $\Sigma^1_1$ set $Y\subseteq X$ and a $\Sigma^1_1$ formula $\varphi$ (with a parameter) satisfying the premise of the $\Sigma^1_1$-dependent choice on $Y$ and an element $a\in X$, returns $f$ satisfying the conclusion:
\begin{align*}
{\rm dom}([\Sigma^1_1\mbox{-}{\sf DC}_{X}]_{\sf mv})&=\{\langle p,q,a_0\rangle\in(\om^\om)^2\times S_q:\forall a\in S_q\exists b\in S_q\;\langle a,b\rangle\in S_p\},\\
[\Sigma^1_1\mbox{-}{\sf DC}_{X}]_{\sf mv}(p,q,a_0)&=\{f\in S_q^\N:f(0)=a_0\;\&\;\forall n\;\langle f(n),f(n+1)\rangle\in S_p\}.
\end{align*}

Note that this formulation is different from the $\Sigma^1_1$-dependent choice on $X$ in the context of second order arithmetic.
Indeed, our formulation falls between the $\Sigma^1_1$-dependent choice and the {\em strong $\Sigma^1_1$-dependent choice} (cf.\ Simpson \cite{SOSOA:Simpson}).
Now, it is easy to see the following:

\begin{Proposition}\label{prop:depecho}
${\sf C}_{\N^\N}\equiv_{\sf W}[\Sigma^1_1\mbox{-}{\sf DC}_{\N^\N}]_{\sf mv}\equiv_{\sf W}[\Sigma^1_1\mbox{-}{\sf DC}_\N]_{\sf mv}$.
\end{Proposition}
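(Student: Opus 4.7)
The plan is to establish the cyclic chain
\[
{\sf C}_{\N^\N}\;\leq_{\sf W}\;[\Sigma^1_1\mbox{-}{\sf DC}_\N]_{\sf mv}\;\leq_{\sf W}\;[\Sigma^1_1\mbox{-}{\sf DC}_{\N^\N}]_{\sf mv}\;\leq_{\sf W}\;{\sf C}_{\N^\N},
\]
which closes both equivalences at once. The middle step $[\Sigma^1_1\mbox{-}{\sf DC}_\N]_{\sf mv}\leq_{\sf W}[\Sigma^1_1\mbox{-}{\sf DC}_{\N^\N}]_{\sf mv}$ is essentially a padding: identify $\N$ with a computably closed (hence $\Sigma^1_1$) subset of $\N^\N$, such as the constant sequences, and re-code both the analytic ``domain'' $S_q\subseteq\N$ and the $\Sigma^1_1$ relation $S_p$ on $\N$ into their images under this embedding; an $\N^\N$-valued solution is then recovered as a sequence in $\N$ by reading off one coordinate.

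For $[\Sigma^1_1\mbox{-}{\sf DC}_{\N^\N}]_{\sf mv}\leq_{\sf W}{\sf C}_{\N^\N}$, I would observe that on any input $(p,q,a_0)$ in the domain, the set of admissible outputs
\[
\{f\in(\N^\N)^\N:f(0)=a_0\;\wedge\;(\forall n)\,f(n)\in S_q\;\wedge\;(\forall n)\,\langle f(n),f(n+1)\rangle\in S_p\}
\]
is a nonempty $\Sigma^1_1$ subset of $(\N^\N)^\N\cong\N^\N$, uniformly coded from $(p,q,a_0)$; hence it suffices to invoke $\Sigma^1_1\mbox{-}{\sf C}_{\N^\N}$, which is Weihrauch-equivalent to ${\sf C}_{\N^\N}$ via the standard trick of writing an analytic set as the projection of a closed subset of $\N^\N\times\N^\N\cong\N^\N$ and then applying closed choice.

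The heart of the argument is the remaining direction ${\sf C}_{\N^\N}\leq_{\sf W}[\Sigma^1_1\mbox{-}{\sf DC}_\N]_{\sf mv}$. From a code $p$ of a nonempty analytic $A\subseteq\N^\N$, uniformly compute a tree $T\subseteq\baire\times\baire$ whose body projects onto $A$, and let $Y\subseteq\baire\times\baire$ be the set of extendible nodes of $T$: since $(\sigma,\tau)\in Y$ iff $(\exists x,y\in\N^\N)\,(\sigma,\tau)\subset(x,y)\in[T]$, the set $Y$ is uniformly $\Sigma^1_1$ in $p$. Via a computable bijection $\baire\times\baire\cong\N$, view $Y$ as a $\Sigma^1_1$ subset of $\N$ and define the $\Sigma^1_1$ relation $\varphi$ by declaring $\varphi(u,v)$ to hold when $v\in Y$ and $v$ extends $u$ by exactly one level in each coordinate. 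Since $A\neq\emptyset$, the node $(\langle\rangle,\langle\rangle)$ lies in $Y$, and every extendible node admits an extendible one-level extension, so the premise of $[\Sigma^1_1\mbox{-}{\sf DC}_\N]_{\sf mv}$ is met; its output yields an infinite branch of $T$, whose first coordinate is an element of $A$. The main technical care is in packaging the $\Sigma^1_1$-definition of extendibility into the exact input format required by the domain of $[\Sigma^1_1\mbox{-}{\sf DC}_\N]_{\sf mv}$; once this is done the reduction is uniform and continuous, so the cyclic chain closes.
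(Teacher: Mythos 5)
Your proposal is correct and follows essentially the same route as the paper: the same cyclic chain ${\sf C}_{\N^\N}\leq_{\sf W}[\Sigma^1_1\mbox{-}{\sf DC}_\N]_{\sf mv}\leq_{\sf W}[\Sigma^1_1\mbox{-}{\sf DC}_{\N^\N}]_{\sf mv}\leq_{\sf W}\Sigma^1_1\mbox{-}{\sf C}_{\N^\N}\equiv_{\sf W}{\sf C}_{\N^\N}$, with the solution set of a ${\sf DC}$-instance being uniformly $\Sigma^1_1$ for the last leg, and dependent choice on the ($\Sigma^1_1$, automatically pruned) tree of extendible nodes with the immediate-successor relation for the first. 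The only cosmetic difference is that you reduce the full analytic choice to $[\Sigma^1_1\mbox{-}{\sf DC}_\N]_{\sf mv}$ directly via a tree on $\N\times\N$ projecting onto $A$, whereas the paper starts from a pruned tree for the closed-choice instance and handles $\Sigma^1_1\mbox{-}{\sf C}_{\N^\N}\leq_{\sf W}{\sf C}_{\N^\N}$ as a separate remark.
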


\begin{proof}
$[\Sigma^1_1\mbox{-}{\sf DC}_{\N^\N}]_{\sf mv}\leq_{\sf W}\Sigma^1_1\mbox{-}{\sf C}_{\N^\N}$:
The set of all solutions to an instance of $[\Sigma^1_1\mbox{-}{\sf DC}_{\N^\N}]_{\sf mv}$ is obviously $\Sigma^1_1$ relative to the given parameter, and one can easily find its $\Sigma^1_1$-index.

${\sf C}_{\N^\N}\leq_{\sf W}[\Sigma^1_1\mbox{-}{\sf DC}_\N]_{\sf mv}$:
Let $T$ be a pruned $\Sigma^1_1$ tree, and put $S_q=[T]$.
Then, let $\varphi_T(\sigma,\tau)$ be the formula expressing that $\tau$ is an immediate successor of $\sigma$.
Moreover, $\varphi_T$ satisfies the premise of $\Sigma^1_1\mbox{-}{\sf DC}_\N$ since $T$ is pruned.
Let $f$ be a solution to the instance $\varphi$ of $[\Sigma^1_1\mbox{-}{\sf DC}_\N]_{\sf mv}$ where $f(0)$ is the empty string.
Since $T$ is pruned, $f$ must be a path through $T$.

We conclude by remarking that $\Sigma^1_1\mbox{-}{\sf C}_{\N^\N}\leq_{\sf W}{\sf C}_{\N^\N}$: Given $p\in\om^\om$, one can find an element of $S_p$ by using ${\sf C}_{\N^\N}$ to find an element $x$ of the $p$-th closed set, and then taking the projection of $x$. Finally, obviously $[{\sf DC}_\N]_{\sf mv}\leq_{\sf W}[\Sigma^1_1\mbox{-}{\sf DC}_{\N^\N}]_{\sf mv}$.
\end{proof}

In the proper context, Question \ref{que:Borel-choice} was formulated as the problem asking whether $\widehat{\Sigma^1_1\mbox{-}{\sf C}_\N}<_{\sf W}{\sf C}_{\N^\N}$.
By the above observations, this is the same as asking the following.

\begin{Question}[Restatement of {Question \ref{que:Borel-choice}}]
Do we have $[{\Sigma^1_1}\mbox{-}{\sf AC}_{\N\to \N}]_{\sf mv}<_{\sf W}[\Sigma^1_1\mbox{-}{\sf DC}_{\N}]_{\sf mv}$?
Or equivalently, $[{\Sigma^1_1}\mbox{-}{\sf AC}_{\N\to \N}]_{\sf mv}<_{\sf W}[\Sigma^1_1\mbox{-}{\sf AC}_{\N\to\N^\N}]_{\sf mv}$?
\end{Question}

\subsection{Compact Choice Principles}\label{subsection:compact-choice}

According to the Arsenin-Kunugui uniformization theorem (cf.\ \cite[Theorem 18.10]{KechrisBook}), the choice principle for $\sigma$-compact $\Delta^1_1$ sets is much simpler than that for arbitrary ${\Delta}^1_1$ sets.
We are interested in that an analogous statement holds for $\Sigma^1_1$-choice, while we know that even a compact $\Sigma^1_1$-choice does not admit a Borel uniformization.

We now consider subprinciples of the $\Sigma^1_1$ choice principle by restricting its domain.
Recall that $S_p$ is the analytic set in $X$ coded by $p\in\om^\om$.
Let $\mathcal{R}$ be a collection of subsets of $X$.
Define the {\em $\Sigma^1_1$-choice principle $\Sigma^1_1\mbox{-}{\sf C}_X\rest{\mathcal{R}}$ restricted to sets in $\mathcal{R}$} as follows:
\begin{align*}
\Sigma^1_1\mbox{-}{\sf C}_{X}\rest{\mathcal{R}}&:\subseteq\om^\om\rightrightarrows X,\\
{\rm dom}(\Sigma^1_1\mbox{-}{\sf C}_{X}\rest{\mathcal{R}})&=\{p\in\om^\om:S_p\not=\emptyset\mbox{ and }S_p\in\mathcal{R}\},\\
\Sigma^1_1\mbox{-}{\sf C}_{X}\rest{\mathcal{R}}(p)&=S_p
\end{align*}

First, we consider the $\Sigma^1_1$ choice principle restricted to compact sets, that is, we define the {\em compact $\Sigma^1_1$-choice $\Sigma^1_1\mbox{-}{\sf KC}_X$} as follows:
\[\Sigma^1_1\mbox{-}{\sf KC}_X=\Sigma^1_1\mbox{-}{\sf C}_X\rest{\{A\subseteq X:A\text{ is compact}\}.}\]

In other words, the $\Sigma^1_1$-compact choice principle $\Sigma^1_1\mbox{-}{\sf KC}_X$ is the multivalued function which, given a code of a nonempty compact $\Sigma^1_1$ set, chooses one element from the set.
This choice principle can be thought of as an interpretation of parallelized two-valued choice.
Before confirming the equivalence, first note that in \cite{KMP} the $\Sigma^1_1$ parallelized two-valued choice is shown to be equivalent to the following principles:

\begin{itemize}
\item The principle $\Sigma^1_1\mbox{-}{\sf WKL}$, the {\em weak K\"onig's lemma for $\Sigma^1_1$-trees}, is the partial multivalued function which, given a binary tree $T\subseteq 2^{<\om}$ which is $\Sigma^1_1$ relative to a given parameter, chooses an infinite path through $T$.
\item The principle $\Pi^1_1\mbox{-}{\sf Sep}$, the {\em problem of separating a disjoint pair of $\Pi^1_1$ sets}, is the partial multivalued function which, given a pair of disjoint sets $A,B\subseteq\om$ which are $\Pi^1_1$ relative to a given parameter, chooses a set $C\subseteq\om$ separating $A$ from $B$, that is, $A\subseteq C$ and $B\cap C=\emptyset$.
\end{itemize}

\begin{Fact}[Kihara-Marcone-Pauly \cite{KMP}]\label{KMP}
$\widehat{\Sigma^1_1\mbox{-}{\sf C}_2}\equiv_{\sf W}\Pi^1_1\mbox{-}{\sf Sep}\equiv_{\sf W}\Sigma^1_1\mbox{-}{\sf WKL}$.
\end{Fact}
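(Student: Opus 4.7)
The plan is to establish the three-reduction cycle
\[\widehat{\Sigma^1_1\mbox{-}{\sf C}_2}\;\leq_{\sf W}\;\Pi^1_1\mbox{-}{\sf Sep}\;\leq_{\sf W}\;\Sigma^1_1\mbox{-}{\sf WKL}\;\leq_{\sf W}\;\widehat{\Sigma^1_1\mbox{-}{\sf C}_2},\]
which yields both equivalences in the statement; the first two arrows are essentially combinatorial, and the third is the main obstacle.

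For the first reduction, given a sequence $(A_n)_{n\in\N}$ of nonempty $\Sigma^1_1$ subsets of $\{0,1\}$, let $P_i=\{n\in\N:i\notin A_n\}$ for $i\in\{0,1\}$. Each $P_i$ is uniformly $\Pi^1_1$, and $P_0\cap P_1=\emptyset$ by nonemptiness of each $A_n$. A separator $S$ with $P_0\subseteq S$ and $S\cap P_1=\emptyset$ yields the choice $n\mapsto\chi_S(n)\in A_n$: if $0\notin A_n$ then $n\in P_0\subseteq S$ forces $\chi_S(n)=1$, and symmetrically. For the second reduction, given disjoint $\Pi^1_1$ sets $P_0,P_1\subseteq\N$, the tree
\[T=\{\sigma\in 2^{<\om}:(\forall n<|\sigma|)\,[(n\notin P_0\vee\sigma(n)=1)\wedge(n\notin P_1\vee\sigma(n)=0)]\}\]
is a $\Sigma^1_1$-tree (each conjunct ``$n\notin P_i$'' is $\Sigma^1_1$) and has an infinite branch (the characteristic function of any separator works); conversely every path through $T$ is itself a separator.

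The delicate direction $\Sigma^1_1\mbox{-}{\sf WKL}\leq_{\sf W}\widehat{\Sigma^1_1\mbox{-}{\sf C}_2}$ must bypass the following obstacle. The most natural query, at each $\sigma\in 2^{<\om}$, is the $\Sigma^1_1$ set $A_\sigma=\{i\in\{0,1\}:\sigma\fr i$ extends to an infinite path in $T\}$, which is nonempty iff $\sigma$ itself is extendable; since extendability is $\Sigma^1_1$ rather than decidable, one cannot filter the bad queries in advance. My plan is to pass to a computable witness tree $U\subseteq(2\times\om)^{<\om}$ with first-coordinate projection $T$, so that paths of $T$ lift to paths of $U$, and to pose the queries at the level of $U$, where the local tree structure is decidable. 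For each pair $(\sigma,\rho)\in U$ one asks the $\Sigma^1_1$ two-valued query ``for which $i$ does some $(\sigma\fr i,\rho\fr m)\in U$ extend to an infinite branch of $U$?'', padded with the default answer $0$ on pairs outside $U$ (a decidable condition). The parallelised answers drive a greedy construction starting from $(\emptyset,\emptyset)$ that builds an infinite branch of $U$, whose first coordinate is the desired path through $T$.

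The principal difficulty is guaranteeing that every instance fed to $\widehat{\Sigma^1_1\mbox{-}{\sf C}_2}$ is genuinely nonempty: pairs of $U$ that are in $U$ but fail to extend still produce empty queries, and handling them forbids the naive padding strategy above. The resolution is a careful two-layered interplay with the $\Pi^1_1$-separation principle --- itself absorbable into the same parallel call of $\widehat{\Sigma^1_1\mbox{-}{\sf C}_2}$ via the first reduction --- that certifies extendability of the current pair before the answer at the next step is trusted. Verifying that this greedy construction never consults an empty query, and hence really produces a branch of $U$, is the technical heart of the proof and requires the full parallel strength of $\widehat{\Sigma^1_1\mbox{-}{\sf C}_2}$.
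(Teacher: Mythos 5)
The paper does not prove this statement --- it is quoted as a Fact from Kihara--Marcone--Pauly \cite{KMP} --- so your attempt has to stand on its own. Your first two arrows are correct and routine: the passage from a sequence of nonempty $\Sigma^1_1$ subsets of $\{0,1\}$ to the disjoint $\Pi^1_1$ sets $P_i=\{n:i\notin A_n\}$, and the encoding of a separation instance as a $\Sigma^1_1$ binary tree whose paths are exactly the separators, both work as written.

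The genuine gap is the third arrow, $\Sigma^1_1\mbox{-}{\sf WKL}\leq_{\sf W}\widehat{\Sigma^1_1\mbox{-}{\sf C}_2}$, which you correctly identify as the heart of the matter but do not actually prove. Passing to a computable witness tree $U$ does not remove the obstacle you name: whether a node of $U$ extends to an infinite branch of $U$ is still a $\Sigma^1_1$ condition that can fail, so the queries posed at the level of $U$ are empty at exactly the same kind of nodes as before, and the promised ``two-layered interplay'' that ``certifies extendability of the current pair'' is circular --- certifying that a node is extendable is precisely as hard as the problem you are trying to reduce. The missing idea is to make every query total by exploiting the $\Pi^1_1$ reduction property (equivalently, comparison of the ordinal stages of two $\Pi^1_1$ events). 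Concretely, let $F_i=\{\sigma\in 2^{<\om}:\sigma\fr i\text{ has only finitely many extensions in }T\}$; these are $\Pi^1_1$ but overlap on the dead part of $T$. By reducing the pair $(F_0,F_1)$ --- e.g.\ putting $\sigma$ into $P_0$ if the finiteness of $T_{\sigma\fr 0}$ is witnessed no later than that of $T_{\sigma\fr 1}$, and into $P_1$ otherwise --- one obtains \emph{disjoint} $\Pi^1_1$ sets covering $F_0\cup F_1$, whose separation instance (and hence, by your own first arrow, a single parallel call to $\widehat{\Sigma^1_1\mbox{-}{\sf C}_2}$ with all queries nonempty) gives at every node $\sigma$ a direction that is guaranteed safe \emph{whenever $\sigma$ is extendable}. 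One then follows this advice greedily from the root; the invariant ``the current node is extendable'' is maintained by induction and never needs to be certified. Without this step your cycle does not close, and the statement remains unproved.
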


We now show that these are equivalent to the $\Sigma^1_1$-compact choice.

\begin{Proposition}\label{prop:compact-prouduct-of-two}
$\widehat{\Sigma^1_1\mbox{-}{\sf C}_2}\equiv_{\sf W}\Sigma^1_1\mbox{-}{\sf KC}_{\N^\N}\equiv_{\sf W}[\Sigma^1_1\mbox{-}{\sf AC}_{\N\to 2}]_{\sf mv}$.
\end{Proposition}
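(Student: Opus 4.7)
The plan is first to note that $\widehat{\Sigma^1_1\mbox{-}{\sf C}_2}\equiv_{\sf W}[\Sigma^1_1\mbox{-}{\sf AC}_{\N\to 2}]_{\sf mv}$ is simply the case $X=2$ of Observation \ref{obs:choice}, so the real content of the proposition is the remaining equivalence $\widehat{\Sigma^1_1\mbox{-}{\sf C}_2}\equiv_{\sf W}\Sigma^1_1\mbox{-}{\sf KC}_{\N^\N}$. Combining this with Fact \ref{KMP}, which identifies $\widehat{\Sigma^1_1\mbox{-}{\sf C}_2}$ with $\Sigma^1_1\mbox{-}{\sf WKL}$, I would reduce matters to showing $\Sigma^1_1\mbox{-}{\sf WKL}\equiv_{\sf W}\Sigma^1_1\mbox{-}{\sf KC}_{\N^\N}$.

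For $\Sigma^1_1\mbox{-}{\sf WKL}\leq_{\sf W}\Sigma^1_1\mbox{-}{\sf KC}_{\N^\N}$, given a $\Sigma^1_1$ binary tree $T\subseteq 2^{<\om}$, the set $[T]\subseteq 2^\om\subseteq\N^\N$ is a nonempty compact analytic set whose $\Sigma^1_1$-code (in the representation of analytic subsets of $\N^\N$) can be computed uniformly; feeding this to $\Sigma^1_1\mbox{-}{\sf KC}_{\N^\N}$ returns an infinite path through $T$.

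The nontrivial direction $\Sigma^1_1\mbox{-}{\sf KC}_{\N^\N}\leq_{\sf W}\Sigma^1_1\mbox{-}{\sf WKL}$ I would handle via the standard unary-with-stopper encoding $\phi\colon\N^\N\to 2^\om$ defined by $\phi(x)=0^{x(0)}1\, 0^{x(1)}1\, 0^{x(2)}1\cdots$. The map $\phi$ is continuous, and its inverse is continuous on the set of $y\in 2^\om$ that contain infinitely many $1$s. Given a code $p$ of a nonempty compact $\Sigma^1_1$ set $K=S_p\subseteq\N^\N$, one can uniformly compute a $\Sigma^1_1$-code for the binary tree
\[ T_p=\{\sigma\in 2^{<\om}:\exists x\in K\;\sigma\prec\phi(x)\}, \]
and apply $\Sigma^1_1\mbox{-}{\sf WKL}$ to $T_p$ to obtain some $y\in[T_p]$. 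Since $\phi$ is continuous and $K$ compact, $\phi(K)$ is compact and hence closed in $2^\om$, so $[T_p]=\phi(K)$. Therefore $y$ has infinitely many $1$s, and $\phi^{-1}(y)\in K$ is well-defined and continuously computable from $y$, providing the required choice.

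The main obstacle, and the only substantive verification, is the identity $[T_p]=\phi(K)$, i.e.\ that every infinite path through $T_p$ actually comes from some $x\in K$ rather than being a spurious limit. Without compactness of $K$ this can fail: a sequence $\phi(x_n)$ with $x_n(0)\to\infty$ converges to $0^\om\notin\phi(\N^\N)$, so $\phi(K)$ would not be closed and $[T_p]$ could contain paths with only finitely many $1$s for which $\phi^{-1}$ is undefined. Compactness of $K$ is exactly what suppresses such accumulation, and this is precisely why the encoding trick places $\Sigma^1_1\mbox{-}{\sf KC}_{\N^\N}$ at the level of $\widehat{\Sigma^1_1\mbox{-}{\sf C}_2}$ rather than strictly above it (as happens for the unrestricted $\Sigma^1_1\mbox{-}{\sf C}_{\N^\N}$).
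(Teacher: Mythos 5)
Your proof is correct, and at the top level it follows the same decomposition as the paper: the first equivalence is exactly Observation \ref{obs:choice}, and the remaining content is reduced to $\Sigma^1_1\mbox{-}{\sf WKL}\equiv_{\sf W}\Sigma^1_1\mbox{-}{\sf KC}_{\N^\N}$ together with Fact \ref{KMP}. Where you genuinely diverge is in the key transfer step, namely that a nonempty compact $\Sigma^1_1$ subset $K$ of $\N^\N$ can be uniformly replaced by a $\Sigma^1_1$ closed subset of $2^\om$ of the same Medvedev degree. The paper argues effectively: it takes a pruned $\Sigma^1_1$ tree $T_b$ with $[T_b]=K$, notes that the finiteness of each branching set is witnessed at some stage $\alpha_\sigma<\omega_1^{\mathrm{CK}}$, applies $\Sigma^1_1$-boundedness to the map $\sigma\mapsto\alpha_\sigma$ to find a single stage at which the approximating tree is already finitely branching, and then uses the usual injection of a finitely branching tree into $2^{<\om}$. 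You instead fix the unary embedding $\phi$ once and for all and invoke the purely topological fact that $\phi(K)$ is compact, hence closed, so the prefix tree $T_p$ satisfies $[T_p]=\phi(K)$ and $\phi^{-1}$ is computable on it; the uniformity in $p$ is immediate from closure properties of $\Sigma^1_1$. Your route avoids the ordinal-stage analysis and $\Sigma^1_1$-boundedness entirely, which makes it arguably cleaner for this proposition; the paper's route produces a concrete finitely branching $\Sigma^1_1$ tree presentation of $K$, which is the form of the argument it reuses later (e.g.\ in Lemma \ref{lem:cpr-acfincof}, where a branching bound $g$ is already available). Both are valid, and your closing observation that compactness is precisely what rules out spurious limit paths of $T_p$ (and hence why the trick stops at compact choice) is the right diagnosis.
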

\begin{proof}
By Observation \ref{obs:choice}, we have $\widehat{\Sigma^1_1\mbox{-}{\sf C}_2}\equiv_{\sf W}[\Sigma^1_1\mbox{-}{\sf AC}_{\N\to 2}]_{\sf mv}$.
  To show that these are equivalent to the $\Sigma^1_1$ compact choice principle, we claim that a set $A\subseteq\Baire$ is $\Sigma^1_1$ and compact if and only if it is computably isomorphic to a $\Sigma^1_1$-closed set $B\subseteq\Cantor$.
  The reverse implication is clear, as compactness is preserved via continuous functions. So suppose that $A$ is $\Sigma^1_1$ and compact. First, it is clearly closed, so let $T_b\subseteq\baire$ be a $\Sigma^1_1$ tree such that $A=[T_b]$ and $T_b$ has no dead-end. For every $\sigma$, there exists at most finitely many $i\in\Nb$ such that $\sigma\fr i\in T_b$, and this fact is observed at some stage $\alpha_\sigma$ below $\omCK$.
  Now apply $\Sigma^1_1$-boundedness to the total function $\sigma\mapsto \alpha_\sigma$ to get a stage $\alpha$ below $\omCK$ such that already, $T_b[\alpha]$ is a finitely branching tree. Then, we can use the usual injection of a finitely branching tree space into Cantor space.
By uniformly relativizing this argument, we now obtain $\Sigma^1_1\mbox{-}{\sf WKL}\equiv_{\sf W}\Sigma^1_1\mbox{-}{\sf KC}_{\N^\N}$, which can conclude by invoking Fact~\ref{KMP} that assert $\Sigma^1_1\mbox{-}{\sf WKL}\equiv_{\sf W}\widehat{\Sigma^1_1\mbox{-}{\sf C}_2}$.
\end{proof}

Next, we show that the compact $\Sigma^1_1$-choice principle is also Weihrauch equivalent to the following principles:
\begin{itemize}
\item The principle $\Pi^1_1\mbox{-}{\sf Tot}_2$, the {\em totalization problem for partial $\Pi^1_1$ two-valued functions}, is the partial multivalued function which, given a partial function $\varphi\pcolon\om\to 2$ which is $\Pi^1_1$ relative to a given parameter, chooses a total extension $f\colon\om\to 2$ of $\varphi$.
\item The principle $\Pi^1_1\mbox{-}{\sf DNC}_2$, the {\em problem of finding a two-valued diagonally non-$\Pi^1_1$ function}, is the partial multivalued function which, given a sequence of partial functions $(\varphi_e)_{e\in\om}$ which are $\Pi^1_1$ relative to a given parameter, chooses a total function $f\colon\om\to 2$ diagonalizing the sequence, that is, $f(e)\not=\varphi_e(e)$ whenever $\varphi_e(e)$ is defined.
\end{itemize}

The latter notion has also been studied by Kihara-Marcone-Pauly \cite{KMP}.

\begin{Proposition}\label{prop:compact-choice-total-DNC}
$\widehat{\Sigma^1_1\mbox{-}{\sf C}_2}\equiv_{\sf W}\Pi^1_1\mbox{-}{\sf Tot}_2\equiv_{\sf W}\Pi^1_1\mbox{-}{\sf DNC}_2$.
\end{Proposition}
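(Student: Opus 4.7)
The plan is to close a short cycle of reductions by trading in $\widehat{\Sigma^1_1\mbox{-}{\sf C}_2}$ for the much more convenient $\Pi^1_1\mbox{-}{\sf Sep}$ via Fact~\ref{KMP}. Concretely, I aim to show
\[
\Pi^1_1\mbox{-}{\sf Sep} \leq_{\sf W} \Pi^1_1\mbox{-}{\sf Tot}_2 \leq_{\sf W} \Pi^1_1\mbox{-}{\sf DNC}_2 \leq_{\sf W} \Pi^1_1\mbox{-}{\sf Sep},
\]
each by a single one-line encoding.

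For $\Pi^1_1\mbox{-}{\sf Sep} \leq_{\sf W} \Pi^1_1\mbox{-}{\sf Tot}_2$, given disjoint $\Pi^1_1$ sets $A, B \subseteq \N$, I will feed into $\Pi^1_1\mbox{-}{\sf Tot}_2$ the partial $\Pi^1_1$ function that sends $A$ to $0$ and $B$ to $1$ (well-defined by disjointness); any totalization $f$ then yields the separator $f^{-1}(\{0\})$. For $\Pi^1_1\mbox{-}{\sf Tot}_2 \leq_{\sf W} \Pi^1_1\mbox{-}{\sf DNC}_2$, given a partial $\Pi^1_1$ function $\varphi$, I will set $\psi_e(e) = 1 - \varphi(e)$ when $\varphi(e)$ is defined and leave $\psi_e$ undefined elsewhere; any DNC function against $(\psi_e)_e$ is then forced to agree with $\varphi$ on $\dom(\varphi)$ and hence totalizes it. For $\Pi^1_1\mbox{-}{\sf DNC}_2 \leq_{\sf W} \Pi^1_1\mbox{-}{\sf Sep}$, given a $\Pi^1_1$ sequence $(\varphi_e)_e$, the sets $\{e : \varphi_e(e) = 0\}$ and $\{e : \varphi_e(e) = 1\}$ form a disjoint $\Pi^1_1$ pair whose separating set transparently encodes a DNC function (its characteristic function, flipped).

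I do not foresee a genuine obstacle: the argument is entirely a matter of confirming that each constructed partial function or set remains $\Pi^1_1$ uniformly in the input index, which is immediate from closure of $\Pi^1_1$ under conjunction and number quantification. The back-translation of each solution is itself a computable operation on integer indices or on characteristic functions, so Weihrauch reducibility (not merely pointwise) holds throughout. Combined with Fact~\ref{KMP}, the cycle closes into the advertised three-way equivalence.
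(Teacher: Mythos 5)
Your proposal is correct, but it closes the equivalence by a different cycle than the paper does. The paper proves $\widehat{\Sigma^1_1\mbox{-}{\sf C}_2}\leq_{\sf W}\Pi^1_1\mbox{-}{\sf Tot}_2\leq_{\sf W}\Pi^1_1\mbox{-}{\sf DNC}_2\leq_{\sf W}\widehat{\Sigma^1_1\mbox{-}{\sf C}_2}$ directly: the middle leg is exactly your $\psi_e(e)=1-\varphi(e)$ trick, but the two outer legs translate between $\Sigma^1_1$ subsets of $2$ and partial $\Pi^1_1$ functions (for ${\sf DNC}_2\leq_{\sf W}\widehat{\Sigma^1_1\mbox{-}{\sf C}_2}$ the paper takes $S_e=\{a:\varphi_e(e)\downarrow<2\rightarrow a\neq\varphi_e(e)\}$; for $\widehat{\Sigma^1_1\mbox{-}{\sf C}_2}\leq_{\sf W}\Pi^1_1\mbox{-}{\sf Tot}_2$ it waits at an ordinal stage for $S_n$ to shrink to a singleton and reads off a partial $\Pi^1_1$ function, the same device you would otherwise need). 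You instead route both outer legs through $\Pi^1_1\mbox{-}{\sf Sep}$ and invoke Fact~\ref{KMP}. What your version buys is that the translations become purely combinatorial --- a separator is literally a totalization of the function that is $0$ on $A$ and $1$ on $B$, and the diagonal values of $(\varphi_e)_e$ literally form a disjoint $\Pi^1_1$ pair --- at the cost of leaning on the externally established Fact~\ref{KMP}; the paper's version is self-contained modulo the coding of $\Sigma^1_1$ sets. One small correction to your last leg: with the pair ordered as $(\{e:\varphi_e(e)=0\},\{e:\varphi_e(e)=1\})$ and the convention that the separator $C$ contains the first set and misses the second, the diagonalizing function is the \emph{unflipped} characteristic function $\chi_C$, since $\varphi_e(e)=0$ forces $\chi_C(e)=1\neq 0$ and $\varphi_e(e)=1$ forces $\chi_C(e)=0\neq 1$. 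Flipping a two-valued function that diagonalizes $(\varphi_e)_e$ does not in general preserve that property, so either drop the word ``flipped'' or reorder the pair; this is a bookkeeping slip, not a gap in the argument.
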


\begin{proof}
$\Pi^1_1\mbox{-}{\sf Tot}_2\leq_{\sf W}\Pi^1_1\mbox{-}{\sf DNC}_2$:
Given a partial function $\varphi\pcolon\om\to 2$, define $\psi_e(e)=1-\varphi(e)$.
If $g$ diagonalizes $(\psi_e)_{e\in\om}$, then $g(e)=1-\psi_e(e)=\varphi(e)$ whenever $\varphi(e)$ is defined.
Therefore, $g$ is a totalization of $\varphi$.

$\Pi^1_1\mbox{-}{\sf DNC}_2\leq_{\sf W}\widehat{\Sigma^1_1\mbox{-}{\sf C}_2}$: Define $S_e=\{a:\varphi_e(e)\downarrow<2\;\rightarrow\;a\not=\varphi_e(e)\}$ is uniformly $\Sigma^1_1$.
Moreover, the choice for $(S^e_{n})_{n\in\om}$ clearly diagonalizes $(\varphi_e)_{e\in\om}$.

$\widehat{\Sigma^1_1\mbox{-}{\sf C}_2}\leq_{\sf W}\Pi^1_1\mbox{-}{\sf Tot}_2$:
Given a $\Sigma^1_1$ set $S_n\subseteq 2$, wait for $S_n$ becomes a singleton, say $S_n=\{s_n\}$.
It is easy to find an index of a partial $\Pi^1_1$ function $f$ such that $f(n)=s_n$ whenever $S_n=\{s_n\}$.
Then, any total extension of $f$ is a choice for $(S_n)_{n\in\om}$.
\end{proof}

A set is {\em $\sigma$-compact} if it is a countable union of compact sets.
By Saint Raymond's theorem (cf.\ \cite[Theorem 35.46]{KechrisBook}), any Borel set with $\sigma$-compact sections can be written as a countable union of Borel sets with compact sections.
In particular, a Borel code for a $\sigma$-compact set $S$ can be transformed into a uniform sequence of Borel codes of compact sets whose union is $S$.
However, there is no analogous result for analytic sets (cf.\ Steel \cite{Ste80}).
Therefore, we do not introduce the $\sigma$-compact $\Sigma^1_1$-choice as 
\[\Sigma^1_1\mbox{-}{\sf C}_X\rest{\{A\subseteq X:A\text{ is $\sigma$-compact}\}.}\]

Instead, we directly code an analytic $\sigma$-compact set as a sequence of analytic codes of compact sets.
In other words, the {\em $\sigma$-compact $\Sigma^1_1$-choice principle}, $\Gamma\mbox{-}{\sf K}_\sigma{\sf C}_{X}$, is the partial multivalued function which, given a sequence $(S_n)_{n\in\N}$ of compact $\Sigma^1_1$ (relative to a parameter) sets at least one of which is nonempty, chooses an element from $\bigcup_{n\in\N}S_n$.
Equivalently (modulo the Weihrauch equivalence), one can formalize $\Sigma^1_1\mbox{-}{\sf K}_\sigma{\sf C}_{\N^\N}$ as the compositional product $\Sigma^1_1\mbox{-}{\sf KC}_{\N^\N}\star\Sigma^1_1\mbox{-}{\sf C}_\N$.

\subsection{Restricted Choice Principles}\label{sec:restricted-choice-principles}

Next, we consider several variations of the axiom of choice:
\begin{enumerate}
\item The axiom of {\em unique choice}: $\forall a\exists! b\;\varphi(a,b)\;\longrightarrow\;\exists f\forall a\;\varphi(a,f(a))$.
\item The axiom of {\em finite choice}: For any $a$, if $\{b:\varphi(a,b)\}$ is nonempty and finite, then there is a choice function for $\varphi$, that is, $\exists f\forall a\;\varphi(a,f(a))$.
\item The axiom of {\em cofinite choice}: For any $a$, if $\{b:\varphi(a,b)\}$ is cofinite, then there is a choice function for $\varphi$.
\item The axiom of {\em finite-or-cofinite choice}: For any $a$, if $\{b:\varphi(a,b)\}$ is either nonempty and finite or cofinite, then there is a choice function for $\varphi$.
\item The axiom of {\em total unique choice}: $\exists f\forall a\;[\exists!b\varphi(a,b)\;\longrightarrow\;\varphi(a,f(a))]$.
\end{enumerate}

The last notion is a modification of a variant of hyperarithmetical axiom of choice introduced by Tanaka \cite{Tan03} in the context of second order arithmetic, where the original formulation is given as follows:
\[\exists Z\forall n\;[\exists !X\varphi(n,X)\ \longrightarrow\ \varphi(n,Z_n)],\]
where $\varphi$ is a $\Sigma^1_1$ formula.
We interpret these axioms of choice as parallelization of partial multi-valued functions.
Then, we define:
\begin{align*}
\Sigma^1_1\mbox{-}{\sf UC}_{X}&=\Sigma^1_1\mbox{-}{\sf C}_{X}\rest{\{A\subseteq X:|A|=1\}},\\
\Sigma^1_1\mbox{-}{\sf C}^{\sf fin}_{X}&=\Sigma^1_1\mbox{-}{\sf C}_{X}\rest{\{A\subseteq X:A\text{ is finite}\}},\\
\Sigma^1_1\mbox{-}{\sf C}^{\sf cof}_{X}&=\Sigma^1_1\mbox{-}{\sf C}_{X}\rest{\{A\subseteq X:A\text{ is cofinite}\}},\\
\Sigma^1_1\mbox{-}{\sf C}^{\sf foc}_{X}&=\Sigma^1_1\mbox{-}{\sf C}_{X}\rest{\{A\subseteq X:A\text{ is finite or cofinite}\}},\\
{\Sigma^1_1\mbox{-}{\sf C}^{\sf aof}_{X}}&{=\Sigma^1_1\mbox{-}{\sf C}_{X}\rest{\{A\subseteq X:A=X\text{ or $A$ is finite}\}},}\\
\Sigma^1_1\mbox{-}{\sf C}^{\sf aou}_{X}&=\Sigma^1_1\mbox{-}{\sf C}_{X}\rest{\{A\subseteq X:A= X\text{ or }|A|=1\}}.
\end{align*}

Note that the all-or-unique choice is often denoted by ${\sf AoUC}_X$ instead of ${\sf C}^{\sf aou}_X$, cf.\ \cite{pauly-kihara2-mfcs}.
Among others, we see that the all-or-unique choice $\Sigma^1_1\mbox{-}{\sf C}^{\sf aou}_{\N}$ is quite robust.
Recall from Proposition \ref{prop:compact-choice-total-DNC}
that the $\Pi^1_1$-totalization principle $\Pi^1_1\mbox{-}{\sf Tot}_2$ and the $\Pi^1_1$-diagonalization principle $\Pi^1_1\mbox{-}{\sf DNC}_2$ restricted to {\em two valued functions} are equivalent to the $\Sigma^1_1$ compact choice principle.
We now consider the $\om$-valued versions of the totalization and the diagonalization principles:
\begin{itemize}
\item The principle $\Pi^1_1\mbox{-}{\sf Tot}_\N$, the {\em totalization problem for partial $\Pi^1_1$ functions}, is the partial multivalued function which, given a partial function $\varphi\pcolon\om\to\om$ which is $\Pi^1_1$ relative to a given parameter, chooses a total extension of $\varphi$.
\item The principle $\Pi^1_1\mbox{-}{\sf DNC}_\N$, the {\em problem of finding a diagonally non-$\Pi^1_1$ function}, is the partial multivalued function which, given a sequence of partial functions $(\varphi_e)_{e\in\om}$ which are $\Pi^1_1$ relative to a given parameter, chooses a total function $f:\om\to\om$ diagonalizing the sequence.
\end{itemize}

It is clear that $\Pi^1_1\mbox{-}{\sf DNC}_\N\leq_{\sf W}\Pi^1_1\mbox{-}{\sf DNC}_2\equiv_{\sf W}\Pi^1_1\mbox{-}{\sf Tot}_2\leq_W\Pi^1_1\mbox{-}{\sf Tot}_\N$.
One can easily see the following.

\begin{Proposition}\label{prop:aouc-total-n}
$\widehat{\Sigma^1_1\mbox{-}{\sf C}^{\sf aou}_\N}\equiv_{\sf W}\Pi^1_1\mbox{-}{\sf Tot}_\N$.
\end{Proposition}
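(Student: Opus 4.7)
The plan is to prove the two Weihrauch reductions separately via uniform coding tricks based on the $\Sigma^1_1$/$\Pi^1_1$ duality. The guiding idea is that the ``all-or-unique'' dichotomy for $S_n$ on the $\Sigma^1_1$ side mirrors the ``undefined-or-single-value'' dichotomy of a partial function on the $\Pi^1_1$ side.

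For $\widehat{\Sigma^1_1\mbox{-}{\sf C}^{\sf aou}_\N}\leq_{\sf W}\Pi^1_1\mbox{-}{\sf Tot}_\N$, given a sequence $(S_n)_{n\in\N}$ of nonempty $\Sigma^1_1$ sets each of which is either $\N$ or a singleton, I would form the relation
\[
R(n,k)\;:\Longleftrightarrow\;\forall j\neq k\;(j\notin S_n),
\]
which is uniformly $\Pi^1_1$ since $j\notin S_n$ is $\Pi^1_1$ and $\Pi^1_1$ is closed under universal number quantifiers. Nonemptiness of $S_n$ forces $R$ to be functional: if $R(n,k)$ and $R(n,k')$ held for distinct $k,k'$, then $S_n\subseteq\{k\}\cap\{k'\}=\emptyset$. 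When $S_n=\{m\}$ we have $R(n,k)\Leftrightarrow k=m$, and when $S_n=\N$ no $k$ satisfies $R(n,\cdot)$. Consequently any total extension of the partial $\Pi^1_1$ function with graph $R$ is a parallel choice for $(S_n)_{n\in\N}$.

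For the converse $\Pi^1_1\mbox{-}{\sf Tot}_\N\leq_{\sf W}\widehat{\Sigma^1_1\mbox{-}{\sf C}^{\sf aou}_\N}$, given a partial $\Pi^1_1$ function $\varphi$ presented by its $\Pi^1_1$ graph, I would fix a uniformly computable family of trees $(T_{n,k})_{n,k\in\N}$ such that $\varphi(n)=k$ iff $T_{n,k}$ is well-founded, and set
\[
S_n\;=\;\{\,m\in\N:\forall k\neq m\;(T_{n,k}\text{ is ill-founded})\,\}.
\]
Ill-foundedness of $T_{n,k}$ is $\Sigma^1_1$ and $\Sigma^1_1$ is closed under universal number quantifiers (pack the witnessing paths into a single real), so a $\Sigma^1_1$ code for $S_n$ is uniformly computable. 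A short case split shows that $S_n=\{\varphi(n)\}$ when $\varphi(n)$ is defined and $S_n=\N$ otherwise, so $(S_n)_{n\in\N}$ satisfies the all-or-unique promise and any parallel choice for it is a total extension of $\varphi$.

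No serious descriptive-set-theoretic machinery is required; the only subtle step is the functionality of $R$ in the first reduction, where the nonemptiness of $S_n$ (implicit in it being a valid instance of the choice problem) is essential to preclude the pathological case $R(n,k)\wedge R(n,k')$ with $k\neq k'$.
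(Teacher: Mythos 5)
Your proof is correct and follows essentially the same route as the paper: both directions rest on the observation that $\{a:\varphi(n)\downarrow\rightarrow a=\varphi(n)\}$ is uniformly $\Sigma^1_1$ and all-or-unique, while conversely the relation ``$k$ is the sole survivor of $S_n$'' is a uniformly $\Pi^1_1$ partial function whose totalizations are exactly the choices. Your explicit graph $R(n,k)\Leftrightarrow\forall j\neq k\,(j\notin S_n)$ is just a more formal rendering of the paper's ``wait until $S_n$ becomes a singleton'' step, so no further changes are needed.
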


\begin{proof}
The argument is almost the same as Proposition \ref{prop:compact-choice-total-DNC}.
Given a partial function $\varphi$, define $S_{n}=\{a:\varphi(n)\downarrow\;\rightarrow\;a=\varphi(n)\}$, which is uniformly $\Sigma^1_1$.
Clearly, either $S_n=\N$ or $S_n$ is a singleton.
Hence, the all-or-unique choice principle chooses an element of $S_{n}$, which produces a totalization of $\varphi$.

Conversely, given a $\Sigma^1_1$ set $S_n\subseteq\N$, wait until $S_n$ becomes a singleton, say $S_n=\{s_n\}$.
It is easy to find an index of partial $\Pi^1_1$ function $f$ such that $f(n)=s_n$ whenever $S_n=\{s_n\}$.
Then, any total extension of $f$ is a choice for $(S_n)_{n\in\om}$.
\end{proof}

We introduce the {\em totalization of the $\Sigma^1_1$-choice principle (restricted to $\mathcal{R}$) on $X$}.
Recall that $S_p$ is the analytic set in $X$ coded by $p\in\om^\om$.
Then we define $\Sigma^1_1\mbox{-}{\sf C}^{\sf tot}_X\rest{\mathcal{R}}$ as follows:
\begin{align*}
\Sigma^1_1\mbox{-}{\sf C}^{\sf tot}_{X}\rest{\mathcal{R}}&:\om^\om\rightrightarrows \N,\\
{\rm dom}(\Sigma^1_1\mbox{-}{\sf C}^{\sf tot}_{X}\rest{\mathcal{R}})&=\{p\in\om^\om:S_p\not=\emptyset\mbox{ and }S_p\in\mathcal{R}\},\\
\Sigma^1_1\mbox{-}{\sf C}^{\sf tot}_{X}\rest{\mathcal{R}}(p)&=
\begin{cases}
S_p&\mbox{ if $x\in\mathcal{R}$,}\\
X&\mbox{ otherwise.}
\end{cases}
\end{align*}

Roughly speaking, if a given $\Sigma^1_1$ set $S$ is nonempty and belongs to $\mathcal{R}$, then any element of $S$ is a solution to this problem as a usual choice problem, but even if a set $S$ is either empty or does not belong to $\mathcal{R}$, there is a need to feed some value, although any value is acceptable as a solution.

In second order arithmetic, the totalization of dependent choice is known as {\em strong dependent choice} (cf.\ Simpson \cite[Definition VII.6.1]{SOSOA:Simpson}).
In the Weihrauch context, Kihara-Marcone-Pauly \cite{KMP} have found that the totalization of ${\sf C}_{\N^\N}$ has an important role in the study of the Weihrauch counterpart of arithmetical transfinite recursion.
Here we consider the totalization of $\Sigma^1_1\mbox{-}{\sf UC}_{\N^\N}$, which can be viewed as the multivalued version of the axiom of {\em total unique choice} mentioned above.

\begin{Proposition}
Let $X$ be a $\Delta^1_1$ subset of $\N$.
Then, $\Sigma^1_1\mbox{-}{\sf UC}^{\sf tot}_{X}\equiv_{\sf W}\Sigma^1_1\mbox{-}{\sf C}^{\sf aou}_{X}$.
\end{Proposition}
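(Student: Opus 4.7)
The plan is to prove the two reductions separately, with the nontrivial direction relying on a simple $\Sigma^1_1$ ``collapse'' construction of the instance. For $\Sigma^1_1\mbox{-}{\sf C}^{\sf aou}_{X}\leq_{\sf W}\Sigma^1_1\mbox{-}{\sf UC}^{\sf tot}_{X}$, I would just feed the aou-instance straight into $\Sigma^1_1\mbox{-}{\sf UC}^{\sf tot}_{X}$ and return the answer verbatim. Any aou-instance $S$ satisfies $S=X$ or $|S|=1$: if $|S|=1$ the totalization is forced to return the unique element of $S$; if $S=X$ (with $|X|\geq 2$) any element of $X=S$ returned by the totalization is already a valid aou-answer, and if $|X|=1$ everything is trivial. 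So identity pre- and post-processors suffice.

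For the other direction $\Sigma^1_1\mbox{-}{\sf UC}^{\sf tot}_{X}\leq_{\sf W}\Sigma^1_1\mbox{-}{\sf C}^{\sf aou}_{X}$, my plan is to transform a $\Sigma^1_1$-code of $S\subseteq X$ into a $\Sigma^1_1$-code of
\[
T=\{a\in X:a\in S\;\vee\;(\exists b)(\exists c)(b\in S\wedge c\in S\wedge b\neq c)\}.
\]
Since $X$ is $\Delta^1_1$ and $S$ is $\Sigma^1_1$ in the given parameter, $T$ will be uniformly $\Sigma^1_1$ in that same parameter, so such a transformation is effective. The verification is a quick case split: when $|S|=1$ the existential clause is vacuous and $T=S$ is a singleton, while when $|S|\geq 2$ the existential is realised and $T=X$. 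Hence $T$ is always a legitimate instance of $\Sigma^1_1\mbox{-}{\sf C}^{\sf aou}_{X}$, and I would return the aou-answer $t\in T$ unchanged as the UC-tot answer: in the singleton case $t$ is forced to equal the unique element of $S$, and in the $|S|\geq 2$ case any element of $X$ is already acceptable to UC-tot.

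The main obstacle I anticipate is the edge case $S=\emptyset$, for which the recipe above also yields $T=\emptyset$ and so falls outside the domain of $\Sigma^1_1\mbox{-}{\sf C}^{\sf aou}_{X}$. Under the convention that $\Sigma^1_1\mbox{-}{\sf UC}^{\sf tot}_{X}$ is defined only on codes of nonempty $\Sigma^1_1$-sets, this case is excluded a priori and the above argument is complete. If instead $\Sigma^1_1\mbox{-}{\sf UC}^{\sf tot}_{X}$ is treated as genuinely total on $\om^\om$, a supplementary step will be needed; the plan there is to exploit the $\Delta^1_1$ presentation of $X$ to compute a default element $x_0\in X$ and, since any value is acceptable to UC-tot when $S=\emptyset$, arrange the reduction so that $T$ remains nonempty in that case without disturbing the singleton behaviour when $|S|=1$.
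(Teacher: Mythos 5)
Your easy direction ($\Sigma^1_1\mbox{-}{\sf C}^{\sf aou}_{X}\leq_{\sf W}\Sigma^1_1\mbox{-}{\sf UC}^{\sf tot}_{X}$, identity pre- and post-processing) is fine and is exactly what the paper dismisses as trivial. The problem is in the other direction, and it is precisely the edge case you flagged. The set
$T=\{a\in X:a\in S\;\vee\;(\exists b)(\exists c)(b\in S\wedge c\in S\wedge b\neq c)\}$
is indeed uniformly $\Sigma^1_1$ and behaves correctly when $|S|=1$ or $|S|\geq 2$, but it is empty when $S=\emptyset$. Your first escape route --- assuming $\Sigma^1_1\mbox{-}{\sf UC}^{\sf tot}_{X}$ is only defined on codes of nonempty sets --- is not available: the entire point of the totalization operator ${}^{\sf tot}$ is that the problem is total on $\om^\om$ (the paper explicitly says that ``even if a set $S$ is either empty or does not belong to $\mathcal{R}$, there is a need to feed some value''), and the paper actually applies $\Sigma^1_1\mbox{-}{\sf UC}^{\sf tot}_{2}$ to possibly empty sets in the proof of Lemma \ref{lem:ATRcc1}. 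Your second escape route is stated but not executed, and the obvious implementation fails: adding a default element $x_0$ to $T$ ``when $S=\emptyset$'' requires the condition $S=\emptyset$ to appear positively in a $\Sigma^1_1$ definition, but emptiness of a $\Sigma^1_1$ set is $\Pi^1_1$, so the resulting set need not be $\Sigma^1_1$; and adding $x_0$ unconditionally destroys the singleton case. So there is a genuine gap.

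The paper closes this gap by replacing the purely logical definition with a stage-based (freezing) construction, which is the standard device used throughout the paper: take a $\Delta^1_1$ approximation $(S[\alpha])_{\alpha<\omCK}$ of $S$ from above, keep $R=X$ until the first ordinal stage at which the approximation becomes a singleton, and at that point freeze $R$ to be that singleton forever. Then $R$ is uniformly $\Sigma^1_1$, always nonempty, and always all-or-unique; if $|S|=1$ the frozen singleton must be $S$ itself (the unique element of $S$ survives to every stage), if $|S|\geq 2$ the approximation never becomes a singleton so $R=X$, and if $S=\emptyset$ then $R$ is either $X$ or some singleton, either of which is an acceptable answer since any value solves the totalized problem on an empty instance. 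If you replace your formula for $T$ by this approximation argument, your proof goes through.
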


\begin{proof}
$\Sigma^1_1\mbox{-}{\sf UC}^{\sf tot}_{X}\leq_{\sf W}\Sigma^1_1\mbox{-}{\sf C}^{\sf aou}_{X}$:
Given a $\Sigma^1_1$ set $S$, wait until $S$ becomes a singleton at some ordinal stage.
If it happens, let $R=S$; otherwise keep $R=X$.
One can effectively find a $\Sigma^1_1$-index of $R$, and either $R=X$ or $R$ is a singleton.

$\Sigma^1_1\mbox{-}{\sf C}^{\sf aou}_{X}\leq_{\sf W}\Sigma^1_1\mbox{-}{\sf UC}^{\sf tot}_{X}$:
Trivial.
\end{proof}

In particular, the totalization of two-valued unique choice is equivalent to the compact choice.

\begin{Corollary}\label{cor:tvunch-comch}
$\widehat{\Sigma^1_1\mbox{-}{\sf UC}^{\sf tot}_{\sf 2}}\equiv_{\sf W}\Sigma^1_1\mbox{-}{\sf KC}_{\N^\N}$.
\end{Corollary}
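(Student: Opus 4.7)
The plan is to obtain this corollary essentially for free by chaining the preceding proposition with Proposition~\ref{prop:compact-prouduct-of-two}. The key preliminary observation is that when $X = 2$, the ``all-or-unique'' restriction is vacuous: every nonempty subset $A \subseteq 2$ is either a singleton ($\{0\}$ or $\{1\}$) or equals the whole space $\{0,1\}$, so the domain and values of $\Sigma^1_1\mbox{-}{\sf C}^{\sf aou}_{2}$ coincide exactly with those of $\Sigma^1_1\mbox{-}{\sf C}_{2}$. In other words, $\Sigma^1_1\mbox{-}{\sf C}^{\sf aou}_{2} = \Sigma^1_1\mbox{-}{\sf C}_{2}$ as partial multivalued functions, not merely up to Weihrauch equivalence.

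Having noted that, the proof reduces to a short chain of equivalences. First, I would invoke the preceding proposition (which applies since $2$ is trivially $\Delta^1_1$) to conclude $\Sigma^1_1\mbox{-}{\sf UC}^{\sf tot}_{2}\equiv_{\sf W}\Sigma^1_1\mbox{-}{\sf C}^{\sf aou}_{2}$. Combined with the identification above, this yields $\Sigma^1_1\mbox{-}{\sf UC}^{\sf tot}_{2}\equiv_{\sf W}\Sigma^1_1\mbox{-}{\sf C}_{2}$. Next I would use the standard fact that parallelization is monotone with respect to $\leq_{\sf W}$, and in particular preserves Weihrauch equivalence, to pass to $\widehat{\Sigma^1_1\mbox{-}{\sf UC}^{\sf tot}_{2}}\equiv_{\sf W}\widehat{\Sigma^1_1\mbox{-}{\sf C}_{2}}$. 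Finally, Proposition~\ref{prop:compact-prouduct-of-two} already gives $\widehat{\Sigma^1_1\mbox{-}{\sf C}_2}\equiv_{\sf W}\Sigma^1_1\mbox{-}{\sf KC}_{\N^\N}$, closing the chain.

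There is no real obstacle here; the only point worth a sentence of justification is the trivial identification of $\Sigma^1_1\mbox{-}{\sf C}^{\sf aou}_{2}$ with $\Sigma^1_1\mbox{-}{\sf C}_{2}$, which is why the corollary is placed exactly after the proposition it cites. Thus the write-up can be very short, essentially just the display
\[
\widehat{\Sigma^1_1\mbox{-}{\sf UC}^{\sf tot}_{2}}\;\equiv_{\sf W}\;\widehat{\Sigma^1_1\mbox{-}{\sf C}^{\sf aou}_{2}}\;=\;\widehat{\Sigma^1_1\mbox{-}{\sf C}_{2}}\;\equiv_{\sf W}\;\Sigma^1_1\mbox{-}{\sf KC}_{\N^\N},
\]
with the first equivalence by the preceding proposition (parallelized), the middle equality by inspection of the definitions when $X=2$, and the last equivalence by Proposition~\ref{prop:compact-prouduct-of-two}.
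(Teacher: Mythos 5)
Your proposal is correct and follows essentially the same route as the paper: the paper's own proof likewise observes that $\Sigma^1_1\mbox{-}{\sf C}^{\sf aou}_{2}\equiv_{\sf W}\Sigma^1_1\mbox{-}{\sf C}_{2}$ (you strengthen this to literal equality, which is valid), combines it with the preceding proposition, and concludes via Proposition~\ref{prop:compact-prouduct-of-two}. The only cosmetic difference is that the paper also cites Fact~\ref{KMP}, which is not strictly needed since Proposition~\ref{prop:compact-prouduct-of-two} already contains the equivalence $\widehat{\Sigma^1_1\mbox{-}{\sf C}_2}\equiv_{\sf W}\Sigma^1_1\mbox{-}{\sf KC}_{\N^\N}$.
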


\begin{proof}
It is clear that $\Sigma^1_1\mbox{-}{\sf C}^{\sf aou}_{\sf 2}\equiv_{\sf W}\Sigma^1_1\mbox{-}{\sf C}_{\sf 2}$.
Thus, the assertion follows from Fact \ref{KMP} and Proposition \ref{prop:compact-prouduct-of-two}.
\end{proof}

\subsection{Arithmetical Transfinite Recursion}

In reverse mathematics, the axiom of $\Sigma^1_1$-choice $\Sigma^1_1\mbox{-}{\sf AC}_0$ is known to be weaker than the arithmetical transfinite recursion scheme ${\sf ATR}_0$ (cf.\ \cite[Section VIII.4]{SOSOA:Simpson}).
However, an analogous result does not hold in the Weihrauch context.
The purpose of this section is to clarify the relationship between the $\Sigma^1_1$-choice principles and the arithmetical transfinite recursion principle in the Weihrauch lattice.

Kihara-Marcone-Pauly \cite{KMP} first introduced an analogue of {\em arithmetical transfinite recursion}, ${\sf ATR}_0$, in the context of Weihrauch degrees, and studied two-sided versions of several dichotomy theorems related to ${\sf ATR}_0$, but they have only considered the one-sided version of ${\sf ATR}_0$.
Then, Goh \cite{Goh} introduced the two-sided version of ${\sf ATR}_0$ to examine the Weihrauch strength of K\"onig's duality theorem for infinite bipartite graphs.
Roughly speaking, the above two Weihrauch problems are introduced as follows:
\begin{itemize}
\item The {\em one-sided version}, ${\sf ATR}$, by \cite{KMP} is the partial multivalued function which, given a countable well-ordering $\prec$, returns the jump hierarchy for $\prec$.
\item The {\em two-sided version}, ${\sf ATR}_2$, by \cite{Goh} is the total multivalued function which, given a countable linear ordering $\prec$, chooses either a jump hierarchy for $\prec$ or an infinite $\prec$-decreasing sequence.
\end{itemize}

Here, a {\em jump hierarchy} for a partially ordered set $(P,<_P)$ is a sequence $(H_p)_{p\in P}$ of sets satisfying the following property:
For all $p\in P$,
\[H_p=\bigoplus_{q<_Pp}H_q'.\]

Even if $\prec$ is not well-founded, some solution to ${\sf ATR}_{2}(\prec)$ may produce a jump hierarchy for $\prec$ (often called a {\em pseudo-hierarchy}) by Harrison's well-known result that there is a pseudo-well-order which admits a jump hierarchy (but a jump hierarchy is not necessarily unique).
Regarding ${\sf ATR}_{2}$, we note that, sometimes in practice, what we need is not a full jump hierarchy for a pseudo-well-ordering, but a jump hierarchy for an initial segment of $\prec$ containing its well-founded part.
Therefore, we introduce another two-sided version ${\sf ATR}_{2'}$ as follows:

Let $L$ be a linearly ordered set.
The {\em well-founded part} of $L$ is the largest initial segment of $L$ which is well-founded.
We say that an initial segment $I$ of $L$ is {\em large} if it contains a well-founded part of $L$.

We consider a variant of the arithmetical transfinite recursion ${\sf ATR}_{2'}$, which states that for any $x$-th linear order $\prec_x$, one can find either a jump hierarchy for a large initial segment of $\prec_x$ or an infinite $\prec_x$-decreasing sequence:
\begin{align*}
{\sf ATR}_{2'}(x)=\ &\{0\fr H:H\mbox{ is a jump hierarchy for a large initial segment of $\prec_x$}\}\\
\cup\ &\{1\fr p:p\mbox{ is an infinite decreasing sequence with respect to $\prec_x$}\}.
\end{align*}

Seemingly, ${\sf ATR}_{2'}$ is completely unrelated to any other choice principles.
Surprisingly, however, we will see that (the parallelization of) ${\sf ATR}_{2'}$ is arithmetically equivalent to the choice principle for $\Sigma^1_1$-compact sets, which is also equivalent to the $\Pi^1_1$ separation principle.
We say that $f$ is {\em arithmetically Weihrauch reducible to $g$} (written $f\leq_{\sf W}^ag$) if we are allowed to use arithmetic functions $H$ and $K$ (i.e., $H,K\leq_{\sf W}\lim^{(n)}$ for some $n\in\N$) in the definition of Weihrauch reducibility.

\begin{Theorem}\label{thm:ATR-is-equivalent-to-compact-choice}
$\widehat{{\sf ATR}_{2'}}\equiv_{\sf W}^a\Sigma^1_1\mbox{-}{\sf KC}_{\N^\N}\equiv_{\sf W}^a\widehat{\Sigma^1_1\mbox{-}{\sf C}^{\sf aou}_\N}$.
\end{Theorem}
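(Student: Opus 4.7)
The plan is to close the cycle $\Sigma^1_1\mbox{-}{\sf KC}_{\N^\N}\leq_{\sf W}\widehat{\Sigma^1_1\mbox{-}{\sf C}^{\sf aou}_\N}\leq_{\sf W}^a\widehat{{\sf ATR}_{2'}}\leq_{\sf W}^a\Sigma^1_1\mbox{-}{\sf KC}_{\N^\N}$. The first inequality comes for free: Propositions~\ref{prop:compact-prouduct-of-two}, \ref{prop:compact-choice-total-DNC}, and~\ref{prop:aouc-total-n} give $\Sigma^1_1\mbox{-}{\sf KC}_{\N^\N}\equiv_{\sf W}\Pi^1_1\mbox{-}{\sf Tot}_2$ and $\widehat{\Sigma^1_1\mbox{-}{\sf C}^{\sf aou}_\N}\equiv_{\sf W}\Pi^1_1\mbox{-}{\sf Tot}_\N$, and the embedding $2\hookrightarrow\N$ yields the trivial reduction $\Pi^1_1\mbox{-}{\sf Tot}_2\leq_{\sf W}\Pi^1_1\mbox{-}{\sf Tot}_\N$. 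Only the two arithmetic reductions require work.

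For $\widehat{{\sf ATR}_{2'}}\leq_{\sf W}^a\Sigma^1_1\mbox{-}{\sf KC}_{\N^\N}$, I would attach to each input linear order $\prec$ the set $A_\prec\subseteq 2^\N\times(2^\om)^\N$ of pairs $(I,H)$ in which $I$ is an initial segment of $\prec$, $H$ is a jump hierarchy on $I$, and $I$ is large. The first two clauses are arithmetic and a routine check shows they are preserved under pointwise Cantor limits. The third clause ``$I$ is large'' can be rewritten as $\forall p\,(p\in I\lor p\in\mathrm{IF}(\prec))$, where $p\in\mathrm{IF}(\prec)$ is $\Sigma^1_1$ (witnessed by an infinite $\prec$-descending sequence starting at $p$); since $\forall p\,\Sigma^1_1$ uniformizes to a single $\Sigma^1_1$ statement and this formulation is also stable under Cantor limits, $A_\prec$ is a $\Sigma^1_1$-closed subset of Cantor space, hence a compact $\Sigma^1_1$ set. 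It is nonempty because $(\mathrm{WF}(\prec),H_{\mathrm{WF}(\prec)})\in A_\prec$. An element $(I,H)$ returned by $\Sigma^1_1\mbox{-}{\sf KC}_{\N^\N}$ (via Proposition~\ref{prop:compact-prouduct-of-two}) then directly produces the valid ${\sf ATR}_{2'}$-output $0\fr H$. Parallelization is automatic: since $\Sigma^1_1\mbox{-}{\sf KC}_{\N^\N}$ is parallelizable, the single-instance reduction lifts to $\widehat{{\sf ATR}_{2'}}\leq_{\sf W}^a\widehat{\Sigma^1_1\mbox{-}{\sf KC}_{\N^\N}}\equiv_{\sf W}\Sigma^1_1\mbox{-}{\sf KC}_{\N^\N}$.

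For $\widehat{\Sigma^1_1\mbox{-}{\sf C}^{\sf aou}_\N}\leq_{\sf W}^a\widehat{{\sf ATR}_{2'}}$, I would pass through $\Pi^1_1\mbox{-}{\sf Tot}_\N$ via Proposition~\ref{prop:aouc-total-n}. Given a partial $\Pi^1_1$ function $\varphi$, the standard $\Pi^1_1$-to-well-foundedness coding yields, uniformly in $(k,m)$, a computable linear order $\prec_{k,m}$ with $\varphi(k)=m$ iff $\prec_{k,m}$ is well-founded. Apply ${\sf ATR}_{2'}$ in parallel to this doubly-indexed family. An output $1\fr p$ immediately eliminates $m$ as a candidate for $\varphi(k)$, and an output $0\fr H$ whose recorded initial segment is arithmetically seen to be proper likewise certifies ill-foundedness of $\prec_{k,m}$. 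The ambiguous case is when ${\sf ATR}_{2'}$ returns a full-field hierarchy on an ill-founded $\prec_{k,m}$, locally indistinguishable from the genuine hierarchy of a well-ordering. Here the unpublished result of Harrington on jump hierarchies along pseudo-well-orderings (flagged in the abstract) is invoked: such a pseudo-hierarchy arithmetically computes $\mathcal{O}$, which in turn decides any fixed partial $\Pi^1_1$ predicate. A careful choice of the family $\prec_{k,m}$ ensures that this Harrington-style extraction can be applied uniformly to the parallel output to recover a totalization of $\varphi$.

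The principal obstacle is precisely this last step: the ambiguity between a genuine jump hierarchy on a well-order and a pseudo-hierarchy on a pseudo-well-order is invisible to any arithmetic inspection of the returned hierarchy alone, so direct case-splitting is impossible. The descending-sequence and proper-initial-segment cases are routine, but verifying that Harrington's lemma nevertheless supplies, arithmetically in the outputs of the parallelized ${\sf ATR}_{2'}$, enough information to reconstruct any partial $\Pi^1_1$ function is the technical heart of the theorem and the point at which the abstract's key technical input is genuinely used.
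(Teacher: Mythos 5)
Your cycle is the right shape and the easy leg ($\Sigma^1_1\mbox{-}{\sf KC}_{\N^\N}\leq_{\sf W}\widehat{\Sigma^1_1\mbox{-}{\sf C}^{\sf aou}_\N}$ via ${\sf Tot}_2\leq_{\sf W}{\sf Tot}_\N$) is fine, but both arithmetic reductions have genuine gaps. For $\widehat{{\sf ATR}_{2'}}\leq_{\sf W}^a\Sigma^1_1\mbox{-}{\sf KC}_{\N^\N}$, your set $A_\prec$ is not compact: the clause ``$H$ is a jump hierarchy on $I$'' is \emph{not} preserved under pointwise limits. The condition $H_p=\bigoplus_{q\prec p}H_q'$ is genuinely $\Pi^0_2$ (the graph of the Turing jump is not closed, since the jump is not continuous), so $A_\prec$ is an analytic but non-closed subset of a compact space and cannot be fed to compact choice. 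The paper circumvents exactly this by working bit-by-bit: for each $n$, $a\prec_x n$ and $k$, the set $S^n_{a,k}\subseteq 2$ of possible values $H_a(k)$ over all hierarchies on $\prec_x\rest n$ is $\Sigma^1_1$ and is a \emph{singleton} whenever $\prec_x\rest n$ is well-founded, so the parallelized totalized unique choice $\widehat{\Sigma^1_1\mbox{-}{\sf UC}^{\sf tot}_2}$ (which equals ${\sf KC}_{\N^\N}$ by Corollary \ref{cor:tvunch-comch}) returns candidate hierarchies $H_n$ that are guaranteed correct on the well-founded part; an arithmetic post-processing then either assembles a hierarchy on a large initial segment or extracts a descending sequence. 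No compactness of the solution set is ever needed.

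For $\widehat{\Sigma^1_1\mbox{-}{\sf C}^{\sf aou}_\N}\leq_{\sf W}^a\widehat{{\sf ATR}_{2'}}$, the step you flag as the obstacle is indeed where your argument fails, and the specific claim you lean on is false: a jump hierarchy along a pseudo-well-ordering computes every hyperarithmetic real, but it does \emph{not} (arithmetically or otherwise) compute $\mathcal{O}$, and no arithmetic inspection of a single full-field hierarchy can tell a genuine hierarchy on a well-order from a pseudo-hierarchy. The paper's resolution does not try to decide individual instances at all; it exploits the all-or-unique structure globally. With $n\in S$ iff $\prec_n$ is ill-founded, one has the dichotomy: $n\notin S$ implies $H_n$ is hyperarithmetic, while $n\in S$ implies $H_n$ computes all hyperarithmetic reals. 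The single arithmetic (in $(H_n)_n$) question $(\exists i)(\forall j)\,H_i\not<_T H_j$ then suffices: if true, the witnessing $i$ must lie in $S$ (otherwise $H_i$ would be hyperarithmetic and hence $<_T H_j$ for any $j\in S$); if false, infinitely many $H_i$ are non-hyperarithmetic, so $S$ is infinite, hence $S=\N$ and any answer is correct. This cross-comparison of the Turing degrees of the parallel outputs is the missing idea; also note that the Harrington result cited in the abstract is actually deployed in Lemma \ref{lem:main-separation}, not here.
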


We divide the proof of Theorem \ref{thm:ATR-is-equivalent-to-compact-choice} into two lemmas.

\begin{Lemma}\label{lem:ATRcc1}
${\sf ATR}_{2'}\leq_{\sf W}^a\widehat{\Sigma^1_1\mbox{-}{\sf UC}^{\sf tot}_2}$.
\end{Lemma}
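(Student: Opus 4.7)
The plan is to use $\widehat{\Sigma^1_1\mbox{-}{\sf UC}^{\sf tot}_2}$ to read off, bit by bit, a jump hierarchy along $\prec$. The key observation is that whenever $p$ lies in the well-founded part of $\prec$, the jump hierarchy below $p$ is \emph{unique}, so the unique-choice oracle delivers the correct bit there for free; bits returned beyond the well-founded part may be arbitrary, but they are harmless after an arithmetic post-processing step.

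Given a linear order $\prec$ on $\N$, for each $(p,n)\in\N^2$ form the set
\[
 A_{p,n} \;=\; \bigl\{ b \in 2 \;\colon\; \exists\,(G_q)_{q \preceq p}\text{ a jump hierarchy along }\{q\colon q \preceq p\}\text{ with } G_p(n)=b \bigr\},
\]
which is uniformly $\Sigma^1_1$ in $\prec,p,n$, so a $\Sigma^1_1$-code for $A_{p,n}$ is computable from $\prec$. If $p$ lies in the well-founded part $W$ of $\prec$, then $\{q\colon q\preceq p\}$ is well-founded and hence admits a \emph{unique} jump hierarchy by the standard transfinite-induction argument, so $A_{p,n}$ is the singleton containing the correct bit $H_p(n)$. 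For $p\notin W$, $A_{p,n}$ may be empty, a singleton, or all of $\{0,1\}$, but in every case it is a legal input to $\Sigma^1_1\mbox{-}{\sf UC}^{\sf tot}_2$. Feeding $(A_{p,n})_{(p,n)\in\N^2}$ to $\widehat{\Sigma^1_1\mbox{-}{\sf UC}^{\sf tot}_2}$ produces bits $b_{p,n}$; set $H^*_p(n):=b_{p,n}$. By the singleton behaviour of unique choice we have $H^*_p = H_p$ for every $p\in W$.

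Now post-process arithmetically. Define
\[
  I \;=\; \bigl\{ p \colon (\forall q\preceq p)\;\; H^*_q = \bigoplus_{r \prec q}(H^*_r)' \bigr\}.
\]
The defining condition is $\Pi^0_2$ in $(H^*,\prec)$, so $I$ is arithmetic in the data. By construction $I$ is $\preceq$-downward closed; since $H^*\upharpoonright W$ coincides with the true jump hierarchy, $W\subseteq I$; and $(H^*_p)_{p\in I}$ is a jump hierarchy along $I$ by the very definition of $I$. Hence outputting $0\fr H^*$, which together with $\prec$ encodes a jump hierarchy along the large initial segment $I$, solves ${\sf ATR}_{2'}(\prec)$. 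All pre- and post-processing is computable in at most a single jump, so the reduction is arithmetic, as required. The only substantive point in the verification is uniqueness of jump hierarchies along well-founded orders, which is routine.
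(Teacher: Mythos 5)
Your reduction is correct, and its core is the same as the paper's: encode the bits of a candidate jump hierarchy as uniformly $\Sigma^1_1$ subsets of $2$ (``the set of values that some jump hierarchy along an initial segment takes at this coordinate''), note that these are singletons on the well-founded part because a jump hierarchy along a well-founded order is unique, and feed them to $\widehat{\Sigma^1_1\mbox{-}{\sf UC}^{\sf tot}_2}$. Where you genuinely diverge is in the arithmetic post-processing. The paper builds, for each $n$, a separate candidate hierarchy $H_n$ for the segment $\prec_x\rest{n}$ and then runs a three-way case analysis: if every $H_n$ is a hierarchy it outputs the global one; if the set of failures has a $\prec_x$-minimum $n$ it outputs the hierarchy below that $n$; and if the failures have no minimum it extracts an infinite $\prec_x$-decreasing sequence, i.e.\ it uses the second branch of ${\sf ATR}_{2'}$. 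You instead assemble a single global candidate $H^*$ and pass to the arithmetically definable set $I$ of points at which the recursion equations hold hereditarily; since $I$ is automatically downward closed, contains the well-founded part, and carries a hierarchy by its very definition, you never need the decreasing-sequence branch at all. This is a cleaner route and in fact shows slightly more, namely that the hierarchy-type solutions of ${\sf ATR}_{2'}$ alone suffice for the reduction. Two small points to tidy: the condition defining $I$ is $\Pi^0_2$ in $H^*\oplus\prec$, so it needs two jumps rather than ``at most a single jump'' (harmless, as only an arithmetic reduction is claimed); and the output should be $(I, H^*\upharpoonright_I)$ in whatever coding makes the large initial segment recoverable, which is fine since $I$ is arithmetic in the data.
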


\begin{proof}
Fix $x$.
Given $n\in\N$, let $JH_{n}$ be the set of jump hierarchies for $\prec_x\rest n$.
Note that $JH$ is an arithmetical relation.
For $a,k\in\N$, if $a\prec_x n$ then let $S^n_{a,k}$ be the set of all $i<2$ such that for some jump hierarchy $H\in JH_n$, the $k$-th value of the $a$-th rank of $H$ is $i$, that is, $H_a(k)=i$.
Otherwise, let $S^n_{a,k}=\{0\}$.
Clearly, $S^n_{a,k}$ is $\Sigma^1_1$ uniformly in $n,a,k$, and therefore there is a computable function $f$ such that $S^n_{a,k}$ is the $f(n,a,k)$-th $\Sigma^1_1$ set $G_{f(n)}$.
Note that if $\prec_x\rest n$ is well-founded, then the product $\prod_{\langle a,k\rangle}S^n_{a,k}$ consists of a unique jump hierarchy for $\prec_x\rest n$.
In particular, $S^n_{a,k}$ is a singleton for any $a\prec_xn$ and $k\in\N$ whenever $\prec_x\rest n$ is well-founded.

Given $p_{n,a,k}\in \Sigma^1_1\mbox{-}{\sf UC}^{\sf tot}_{2}(f(n,a,k))$, define $H_n=\bigoplus_{\langle a,k\rangle}p_{n,a,k}$.
Note that if $n$ is contained in the well-founded part of $\prec_x$, then $H_n$ must be a jump hierarchy for $\prec_x\rest n$.
By using an arithmetical power, first ask if $H_n$ is a jump hierarchy for $\prec_{x}\rest n$ for every $n$.
If yes, $\bigoplus_nH_n$ is a jump hierarchy along the whole ordering $\prec_x$, which is, in particular, large.
If no, next ask if there exists a $\prec_{x}$-least $n$ such $H_n$ is not a jump hierarchy for $\prec_{x}\rest n$.
If yes, choose such an $n$, and then obviously $n$ is not contained in the well-founded part of $\prec_x$.
Hence, $\prec_x\rest n$ is a large initial segment of $\prec_x$.
Moreover, by minimality of $n$, $\bigoplus\{H_j':j\prec_x n\}$ is the jump hierarchy for $\prec_x\rest n$.
If there is no such $n$, let $j_0$ be the $<_\N$-least number such that $H_{j_0}$ is not a jump hierarchy for $\prec_x\rest {j_0}$, and $j_{n+1}\prec_{x}j_n$ be the $<_\N$-least number such that $H_{j_{n+1}}$ is not a jump hierarchy for $\prec_x\rest {j_{n+1}}$.
By using an arithmetical power, one can find such an infinite sequence $(j_n)_{n\in\om}$, which is clearly decreasing with respect to $\prec_x$.
\end{proof}

\begin{Lemma}\label{lem:AoUC-reducible-to-ATR}
$\Sigma^1_1\mbox{-}{\sf C}^{\sf aou}_\N\leq_{\sf W}^a\widehat{{\sf ATR}_{2'}}$.
\end{Lemma}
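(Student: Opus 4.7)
The plan is to feed the Kleene--Brouwer orderings of the trees coding $S$ into $\widehat{{\sf ATR}_{2'}}$ in parallel and then recover an element of $S$ by a three-stage arithmetic analysis of the solver's answer.

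First, given the parameter $p$ coding the $\Sigma^1_1$-set $S$, I uniformly produce trees $T^n\subseteq\baire$ with $n\in S$ iff $[T^n]\neq\emptyset$, and set $L_n=\mathrm{KB}(T^n)$. I pass $(L_n)_{n\in\N}$ to $\widehat{{\sf ATR}_{2'}}$; for each $n$ the solver returns either an infinite $L_n$-decreasing sequence or a jump hierarchy $H_n$ along a large initial segment $I_n\subseteq L_n$. The inverse (arithmetic) reduction then proceeds in three stages. \emph{Stage 1:} if some answer is a decreasing sequence, the Kleene--Brouwer property turns it into an infinite path through $T^n$, so $n\in S$ and I return $n$. \emph{Stage 2:} if every answer is a hierarchy, I test arithmetically whether $I_n\subsetneq L_n$ for some $n$; a well-founded $L_n$ would force $I_n=L_n$ (its well-founded part being everything), so any strict inequality witnesses that $L_n$ is ill-founded and hence $n\in S$. \emph{Stage 3:} every $I_n$ equals $L_n$; in the trivial branch $S=\N$ I return $0$.

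In the remaining branch $|S|=1$, say $S=\{n_0\}$, each $L_m$ with $m\neq n_0$ is well-founded and $H_m$ is its genuine $\Delta^1_1(p)$ jump hierarchy, whereas $L_{n_0}$ is ill-founded but admits a full jump hierarchy, which forces $L_{n_0}$ to be a Harrison-type pseudo-well-ordering: if $L_{n_0}$ had a computable-in-$p$ descending sequence, the corresponding values of $H_{n_0}$ would yield a strictly decreasing sequence of hyperarithmetic jump-ranks, which is impossible. By the Harrison analysis, the well-founded part of $L_{n_0}$ has order type exactly $\omega_1^{{\rm CK},p}$, so $H_{n_0}$ uniformly computes $0^{(\alpha)}$ for every $\alpha<\omega_1^{{\rm CK},p}$ and therefore every hyperarithmetic-in-$p$ real; in particular $H_{n_0}\geq_T H_m$ for every $m$. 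Conversely, each $H_m$ with $m\neq n_0$ is $\Delta^1_1(p)$ and cannot compute the non-hyperarithmetic $H_{n_0}$. Hence $n_0$ is characterized arithmetically as the unique $n$ for which $H_m\leq_T H_n$ holds for every $m$, a $\Sigma^0_3$-in-$(H_m\oplus H_n)$ condition.

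The main obstacle is Stage~3: the adversarial oracle is free to return a genuine pseudo-hierarchy along the whole ill-founded $L_{n_0}$, so the bare shape of the answer does not reveal ill-foundedness. The Harrison structural theorem is what makes the identification of $n_0$ possible, by forcing any such pseudo-hierarchy to have well-founded part of ordertype $\omega_1^{{\rm CK},p}$, and thereby to compute every hyperarithmetic-in-$p$ real, creating the necessary computational gap between $H_{n_0}$ and the other $H_m$. The remaining steps---producing the Kleene--Brouwer orderings, checking $I_n\subsetneq L_n$, and comparing Turing degrees---are routine arithmetic operations.
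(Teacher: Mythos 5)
Your proof is correct and follows essentially the same route as the paper's: encode membership in $S$ as ill-foundedness of linear orders, feed them to the parallelized ${\sf ATR}_{2'}$, dispose arithmetically of decreasing-sequence answers and of hierarchies along proper large initial segments, and then exploit the dichotomy that a full jump hierarchy is hyperarithmetic in $p$ when the order is well-founded but computes all hyperarithmetic-in-$p$ reals when it is a pseudo-well-order, together with the all-or-unique shape of $S$, to extract a correct index by Turing-degree comparison. The only cosmetic differences are that your Stage-2 test ($I_n\subsetneq L_n$) is slightly more aggressive than the paper's (which only rejects when $L_n\setminus J_n$ is nonempty with no $\prec_n$-minimal element), and your final arithmetic query uses $(\forall m)\,H_m\leq_T H_n$ where the paper uses $(\forall j)\,H_i\not<_T H_j$; both variants are sound.
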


\begin{proof}
Let $S$ be a computable instance of $\Sigma^1_1\mbox{-}{\sf C}^{\sf aou}_\N$.
Let $\prec_n$ be a linear order on an initial segment $L_n$ of $\N$ such that $n\in S$ iff $\prec_n$ is ill-founded.
Let $H_n$ be a solution to the instance $\prec_n$ of ${\sf ATR}_{2'}$.
Ask if there is $n$ such that $H_n$ is an infinite decreasing sequence w.r.t.\ $\prec_n$.
If so, one can arithmetically find such an $n$, which belongs to $S$.
Otherwise, each $H_n$ is a jump hierarchy along a large initial segment $J_n$ of $L_n$.
By an arithmetical way, one can obtain $J_n$.
Then ask if $L_n\setminus J_n$ is nonempty, and has no $\prec_n$-minimal element.
If the answer to this arithmetical question is yes, we have $n\in S$.

Thus, we assume that for any $n$ either $L_n=J_n$ holds or $L_n\setminus J_n$ has a $\prec_n$-minimal element.
In this case, if $n\in S$ then $J_n$ is ill-founded.
This is because if $J_n$ is well-founded, then $J_n$ is exactly the well-founded part of $L_n$ since $J_n$ is large, and thus $L_n\setminus J_n$ is nonempty and has no $\prec_n$-minimal element.
Moreover, since $J_n$ admits a jump hierarchy while it is ill-founded, $J_n$ is a pseudo-well-order; hence $H_n$ computes all hyperarithmetical reals.
Conversely, if $n\not\in S$ then $H_n$ is a jump hierarchy along the well-order $J_n=L_n$, which is hyperarithmetic.

Now, ask if the following $(H_n)_{n\in\N}$-arithmetical condition holds:
\begin{align}\label{equ:compare-jump-hierarchies}
(\exists i)(\forall j)\;H_i\not<_TH_j.
\end{align}

By our assumption that $S\not=\emptyset$, there is $j\in S$, so that $H_j$ computes all hyperarithmetic reals.
Therefore, if (\ref{equ:compare-jump-hierarchies}) is true, for such an $i$, the hierarchy $H_i$ cannot be hyperarithmetic; hence $i\in S$.
Then one can arithmetically find such an $i$.
If (\ref{equ:compare-jump-hierarchies}) is false, for any $i$ there is $j$ such that $H_i<_TH_j$.
This means that there are infinitely many $i$ such that $H_i$ is not hyperarithmetical, i.e., $i\in S$.
However, by our assumption, if $S$ is infinite, then $S=\N$.
Hence, any $i$ is solution to $S$.

Finally, one can uniformly relativize this argument to any instance $S$.
\end{proof}

\begin{proof}[Proof of Theorem \ref{thm:ATR-is-equivalent-to-compact-choice}]
By Corollary \ref{cor:tvunch-comch} and Lemmas \ref{lem:ATRcc1} and \ref{lem:AoUC-reducible-to-ATR}.
\end{proof}

One can also consider a jump hierarchy for a partial ordering.
Then, we consider the following partial order version of Goh's arithmetical transfinite recursion.
Let $(\prec_x)$ be a list of all countable partial orderings.
\begin{align*}
{\sf ATR}_{2}^{\sf po}(x)=\ &\{0\fr H:H\mbox{ is a jump hierarchy for $\prec_x$}\}\\
\cup\ &\{1\fr p:p\mbox{ is an infinite decreasing sequence with respect to $\prec_x$}\}.
\end{align*}

Note that ${\sf ATR}_{2}^{\sf po}(x)$ is an arithmetical subset of $\N^\N$.
Obviously, 
\[{\sf ATR}\leq_{\sf W}{\sf ATR}_{2'}\leq_{\sf W}{\sf ATR}_2\leq_{\sf W}{\sf ATR}_{2}^{\sf po}\leq_{\sf W}\Sigma^1_1\mbox{-}{\sf C}_{\N^\N}.\]
This version of arithmetical transfinite recursion directly computes a solution to the all-or-unique choice on the natural numbers without using parallelization or arithmetical power.

\begin{Proposition}
$\Sigma^1_1\mbox{-}{\sf C}^{\sf aou}_\N\leq_{\sf W}{\sf ATR}_{2}^{\sf po}$.
\end{Proposition}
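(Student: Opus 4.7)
The plan is to reduce $\Sigma^1_1\mbox{-}{\sf C}^{\sf aou}_\N$ to a single instance of ${\sf ATR}_{2}^{\sf po}$ by constructing, from any instance $S$, a computable partial ordering $\prec$ for which no jump hierarchy can exist; this forces ${\sf ATR}_{2}^{\sf po}$ to return an infinite $\prec$-decreasing sequence, whose component index we output. Given a $\Sigma^1_1$-code of $S \subseteq \N$ satisfying the all-or-unique hypothesis, I first produce, uniformly in $n$, a computable linear ordering $\prec_n$ on $\N$ with $n \in S$ iff $\prec_n$ is ill-founded, and such that the well-founded part of $\prec_n$ always has order type $\omega$. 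A convenient choice is $\prec_n = \omega + \mathrm{KB}(T_n)$, where $T_n$ is a computable tree with $[T_n] \neq \emptyset \iff n \in S$ and $\mathrm{KB}$ denotes the Kleene--Brouwer ordering placed above the fixed $\omega$-tail. Then I form $\prec$ on $\N$ as the computable disjoint union $\bigsqcup_n \prec_n$, in which elements of distinct components are incomparable, and feed $\prec$ to ${\sf ATR}_{2}^{\sf po}$.

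The key observation is that $\prec$ cannot admit a jump hierarchy. Any jump hierarchy for a disjoint union restricts to a jump hierarchy on each component, and by a classical result on jump hierarchies along computable linear orders, such a linear ordering admits a jump hierarchy iff it is well-founded or its well-founded part has order type $\omCK$ (i.e.\ it is a pseudo-well-ordering). Since any ill-founded $\prec_n$ has well-founded part of order type $\omega < \omCK$, no ill-founded $\prec_n$ admits a jump hierarchy, so neither does $\prec$ whenever at least one component is ill-founded. The all-or-unique hypothesis forces $S \neq \emptyset$, hence this is always the case, ruling out the jump-hierarchy branch of the output. Therefore every solution has the form $1 \fr p$ with $p$ an infinite $\prec$-decreasing sequence. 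Because the components of $\prec$ are pairwise incomparable, all terms of $p$ lie in a single component $\prec_n$, and we can computably read off $n$ from any term. Outputting this $n$ gives an element of $S$, since $p$ witnesses that $\prec_n$ is ill-founded.

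The main obstacle is the verification of the step that an ill-founded computable linear ordering with well-founded part of computable order type admits no jump hierarchy. The argument takes an infinite descending chain $c_0 \succ c_1 \succ \cdots$ above the well-founded part and notes that the equations $H_{c_i} = \bigoplus_{q \prec c_i} H_q'$ force $H_{c_i} \geq_T H_{c_{i+1}}'$ for each $i$; iterating shows that $H_{c_0}$ would compute $0^{(\beta)}$ for every computable ordinal $\beta$, and combining this with $\Sigma^1_1$-boundedness applied to the ranks of well-founded initial segments visible from $H_{c_0}$ yields a contradiction with the well-founded part stopping well below $\omCK$. Making this step precise is the crux of the proof; the remaining bookkeeping (encoding the disjoint union as a computable partial order on $\N$, computably extracting the component index from a decreasing sequence) is routine.
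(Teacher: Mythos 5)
Your reduction to a single partial order built as a disjoint union of the $\prec_n$'s matches the paper's first move (the paper uses the tree $00\sqcup_nT_n$ viewed as a partial order), and extracting $n\in S$ from a descending sequence is handled the same way. But the heart of your argument --- that $\prec$ can be arranged to admit no jump hierarchy at all, so that the hierarchy branch of ${\sf ATR}_{2}^{\sf po}$ never fires --- rests on a construction that cannot exist. You ask for a uniform computable map $T_n\mapsto\prec_n$ with $\prec_n$ ill-founded iff $T_n$ is, and with the well-founded part of $\prec_n$ of order type $\om$ (or even just some type $<\omCK$) whenever $\prec_n$ is ill-founded. If an ill-founded computable linear order has well-founded part of type $\alpha<\omCK$, then that well-founded part is hyperarithmetic (compute ranks by effective transfinite recursion up to $\alpha$; for $\alpha=\om$ it is just the $\Sigma^0_2$ set of elements with finitely many predecessors), so its complement is a nonempty hyperarithmetic set with no least element, and one hyperarithmetically extracts a descending sequence. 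Hence under your construction ``$\prec_n$ is ill-founded'' would be equivalent to ``$\prec_n$ has a hyperarithmetic descending sequence,'' which is a $\Pi^1_1$ property of $n$ (Spector--Gandy), and for $\om$ it is even arithmetic. Since the single trees $T_n$ occurring in valid all-or-unique instances are essentially arbitrary computable trees, this would make the $\Sigma^1_1$-complete ill-foundedness predicate $\Pi^1_1$, a contradiction. Your ``convenient choice'' $\om+\mathrm{KB}(T_n)$ indeed fails the requirement: the well-founded part of $\mathrm{KB}(T_n)$ for an ill-founded $T_n$ can be as large as $\omCK$ (Harrison-type trees), in which case $\mathrm{KB}(T_n)$ is a pseudo-well-ordering and, by Harrison, may well support a jump hierarchy.

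Once the jump-hierarchy branch cannot be excluded, your proof has no mechanism to produce an element of $S$ from a pseudo-hierarchy, and that is precisely where the actual work lies. The paper's proof accepts a hierarchy $H$ for the whole partial order, sets $H^\ast_n=H_{\langle 00n\rangle}$, observes that $(H^\ast_n)''\leq_TH$ so that the question $(\exists i)(\forall j)\,H^\ast_i\not<_TH^\ast_j$ is decidable from $H$, and then uses Friedman's theorem (a linear/partial order supporting a jump hierarchy has no hyperarithmetic descending sequence, so $H^\ast_n$ is hyperarithmetic exactly when $n\notin S$ and computes all hyperarithmetic reals when $n\in S$) together with the all-or-unique promise to argue that a positive answer yields some $i\in S$ and a negative answer forces $S=\N$. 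Your third paragraph's use of Friedman's theorem is pointed in the right direction, but it is deployed to rule out hierarchies rather than, as is actually needed, to exploit one.
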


\begin{proof}
Let $S$ be a computable instance of $\Sigma^1_1\mbox{-}{\sf C}^{\sf aou}_\N$.
Let $T_n$ be a computable tree such that $n\in S$ iff $T_n$ is ill-founded.
Define 
\[T=00\sqcup_nT_n=\{\langle\rangle,\langle 0\rangle,\langle 00\rangle\}\cup\{\langle 00n\rangle\sigma:\sigma\in T_n\}.\]
Let $i\fr H$ be a solution to the instance $T$ of ${\sf ATR}_{2}^{\sf po}$.
If $i=1$, i.e., if $H$ is an infinite decreasing sequence w.r.t.\ $T$, then this provides an infinite path $p$ through $T$.
Then, choose $n$ such that $00n\prec p$, which implies $T_n$ is ill-founded, and thus $n\in S$.
Otherwise, $i=0$, and thus $H$ is a jump hierarchy for $T$.
We define $H^\ast_n=H_{\langle 00n\rangle}$.
Note that if $n\not\in S$ then $H^\ast_n$ is hyperarithmetic, and if $n\in S$ then $H^\ast_n$ computes all hyperarithmetical reals.
By the definition of a jump hierarchy, we have $(H^\ast_n)''\leq_TH$.
Thus, the following is an $H$-computable question:
\begin{align}\label{equ:compare-jump-hierarchies2}
(\exists i)(\forall j)\;H^\ast_i\not<_TH^\ast_j.
\end{align}

As in the proof of Lemma \ref{lem:AoUC-reducible-to-ATR}, one can show that if (\ref{equ:compare-jump-hierarchies2}) is true for $i$ then $i\in S$, and if (\ref{equ:compare-jump-hierarchies2}) is false then any $i$ is a solution to $S$.
As before, one can uniformly relativize this argument to any instance $S$.
\end{proof}

\begin{Question}
${\sf ATR}_{2}\equiv^a_{\sf W}{\sf ATR}_{2'}\equiv^a_{\sf W}{\sf ATR}_{2}^{\sf po}$?
\end{Question}

\section{The Medvedev lattice of $\Sigma^1_1$-Closed Sets}

In this section, we investigate the structure of different (semi-)sublattices of the Medvedev degrees, corresponding to restrictions on the axiom of choice. The Medvedev reduction was introduced in \cite{Medvedev} to classify problems according to their degree of difficulty, as for Weihrauch reducibility. However, when Weihrauch reducibility compare problems that have several instances, each of them with multiple solutions, Medvedev reducibility compare ``mass problems'', which correspond to problems with a unique instance. A mass problem is a set of functions from natural numbers to natural numbers, representing the set of solutions. For two mass problems $P,Q\subseteq\omega^\omega$, we say that $P$ is Medvedev reducible to $Q$ if every solution for $Q$ uniformly computes a solution for $P$.

\begin{Definition}[Medvedev reduction]
  Let $P,Q\subseteq\omega^\omega$ be sets. We say that $P$ is Medvedev reducible to $Q$, written $P\leq_MQ$ if there exists a single computable function $f$ such that for every $x\in B$, $f(x)\in A$.
\end{Definition}

If $P\subseteq X\times\omega^\omega$ is now a Weihrauch problem, that is a partial multi-valued function, then for any instance $x\in X$, one can consider the mass problem $P(x)=\{y:(x,y)\in P\}$. Using Medvedev reducibility, we are able to compare the degree of complexity of different instances of the same problem, and we will be interested in the structural property of their complexity: Given a Weihrauch problem $P$, we define the Medvedev lattice of $P$ by the lattice of Medvedev degrees of $P(x)$ for all computable instances $x\in\dom(P)$.

We will be mainly interested in upward density of Medvedev lattices of $P$, for $P$ being various choice problems, as it can be used to Weihrauch-separate two problems. Suppose that $P\leq_WQ$ and the Medvedev lattice of $Q$ is upward dense while the Medvedev lattice of $P$ is not. Then, we have $P<_WQ$: Let $x\in\dom(P)$ be any computable instance realizing a maximal $P$-Medvedev degree, and take $y\in\dom(Q)$ such that $P(x)\leq_MQ(y)$ (as $P\leq_WQ$). By upward density, let $z\in\dom(Q)$ be such that $Q(z)>_MQ(y)$. Then, it cannot be that there is $t\in\dom(P)$ such that $P(t)\geq_MQ(z)$, as it would contradict maximality of $x$. Therefore, $z$ is a witness that $P<_WQ$. 

We will consider several restricted $\Sigma^1_1$ closed subsets of Baire space, defined as below.

\begin{Definition}
  We define several versions of axiom of choice where the set we have to choose from are restricted to special kinds:
  \begin{align*}
    \Sigma^1_1\mbox{-}{\sf AC}^{\star}_{\N}&=\widehat{\Sigma^1_1\mbox{-}{\sf C}^{\star}_{\N}}
  \end{align*}
  where $\star\in\{\sf fin, cof, foc, aof, aou\}$ respectively corresponding to ``finite'', ``cofinite'', ``finite or cofinite'', ``all or finite'' and  ``all or unique''. Note that we drop the multivalued notation $[\cdot]_{\sf mv}$.
  We will also consider the Dependent Choice with the same restricted sets:
  \begin{align*}
    \Sigma^1_1\mbox{-}{\sf DC}^\star_{\N}&=\Sigma^1_1\mbox{-}{\sf C}_{\N^\N}\rest{\{[T]: \forall\sigma\in T, \{n:\sigma\fr n\in T\}\text{ is }\star\}}.
  \end{align*}
  where $\star\in\{\sf fin, cof, foc, aof, aou\}$ has the same meaning. For any $\sigma\in\omega^{<\omega}$ a string corresponding to a choice for the previous sets, $\{n:\sigma\fr n\in T\}$ corresponds to the next possible choice, and this set has to be as specified by $\star$. Note that it corresponds to a particular formulation of $\Sigma^1_1$ dependent choice, as explained just before Proposition~\ref{prop:depecho}.
\end{Definition}

Throughout this section, we use the following abuse of notation.

\begin{Notation}
Given a Weihrauch problem $P$, we abuse notation by using the formula $A\in P$ to mean that $A$ is a computable instance of $P$, that is, $A=P(x)$ for some computable $x\in{\rm dom}(P)$.
\end{Notation}

In the following, we will say that a tree $T$ is homogeneous if its set of paths is homogeneous. It corresponds to $[T]$ being some $\prod_{n\in\N}A_n$, that is $[T]$ is truly an instance of the axiom of choice. We see a homogeneous tree $T$ as a tree where the set $\{n\in\Nb:\sigma\fr n\in T\}$ does not depend on $\sigma\in T$.

Before going further, we mention that under Medvedev reducibility, AC and DC are always different, as there exists product of two homogeneous set that are never Medvedev equivalent to a homogeneous set.

\begin{Proposition}\label{prop:notEquiv}
  For every $\star\in\{\sf fin, cof, foc, aof\}$, there exists $A\in\dc{\star}$ such that there is no $B\in\ac{\star}$ with $A\equiv_M B$.
\end{Proposition}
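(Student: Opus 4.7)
The plan is to construct, for each $\star$, a $\Sigma^1_1$ tree $T_\star$ such that $[T_\star]\in\dc{\star}$ has a Medvedev degree not realized by any product $B=\prod_n A_n\in\ac{\star}$. The obstruction is produced by encoding a pair of Turing-incomparable $\Sigma^1_1$-singleton reals into $[T_\star]$ and contradicting the product structure of $B$ via a splicing argument: in a product, one may replace a long initial segment of one solution by that of another and remain inside $B$, but in a carefully designed $[T_\star]$ any such splicing is forced to collapse two incomparable Turing degrees.

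The case $\star=\sf fin$ already contains the core argument. Fix two Turing-incomparable $\Sigma^1_1$-singleton reals $f_0,f_1\in\Baire$ (these exist by a standard Kleene--Post-style construction inside the class of $\Sigma^1_1$-singletons). Let $T_\star=\{\sigma\in\baire:\sigma\prec f_0\text{ or }\sigma\prec f_1\}$; this is a $\Sigma^1_1$ tree, finitely branching (one or two successors at each node), and $[T_\star]=\{f_0,f_1\}$, so $[T_\star]\in\dc{fin}$. Suppose for contradiction $[T_\star]\equiv_M B=\prod_n A_n$ via computable $\Phi\colon\{f_0,f_1\}\to B$ and $\Psi\colon B\to\{f_0,f_1\}$. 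Set $x_i=\Phi(f_i)\in B$; then $\Psi(x_i)\leq_T x_i\leq_T f_i$, and since $f_{1-i}\not\leq_T f_i$ we must have $\Psi(x_i)=f_i$. Pick $N$ so large that the Turing functional computing $\Psi$ on input $x_1$ has output enough of $f_1$ to distinguish it from $f_0$ after reading $x_1\upto N$. Define $y\in\Baire$ by $y\upto N=x_1\upto N$ and $y(n)=x_0(n)$ for $n\geq N$. Since $B$ is a product and each coordinate of $y$ is an element of the corresponding $A_n$, we have $y\in B$; by choice of $N$ and the fact that $\Psi(y)\in\{f_0,f_1\}$, we get $\Psi(y)=f_1$, so $y\geq_T f_1$. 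But $y$ and $x_0$ agree on a cofinite set of coordinates, hence $y\equiv_T x_0\leq_T f_0$. This gives $f_1\leq_T f_0$, contradicting Turing-incomparability.

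For $\star\in\{\sf cof,\sf foc,\sf aof\}$, the branching condition prevents $[T_\star]$ from being literally a two-point set, so I would modify the construction while preserving two distinguished Turing-incomparable $\Sigma^1_1$-singleton paths $f_0,f_1$. For $\sf aof$, root $T_\star$ at a node with $\N$-many successors, only two of which carry infinite branches (namely $f_0$ and $f_1$) and the rest terminating in finite subtrees; the subtrees for $f_0$ and $f_1$ are taken to be unary (forcing a unique continuation), which is allowed since ``finite'' is $\star$-compatible from then on. For $\sf foc$ and $\sf cof$, one decorates the splitting core of $f_0,f_1$ with cofinite dead-ends engineered so that the ``distinguished'' pair is still detectable from any Medvedev equivalence; the splicing argument then targets only $f_0,f_1$ and repeats the contradiction above.

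The main obstacle will be the case $\star=\sf cof$, in which every node of $T_\star$ has cofinitely many successors and $[T_\star]$ is therefore forced to be uncountable. Here one cannot simply isolate two paths as the entire path set. Instead, the splicing must be run relative to the distinguished pair while controlling the cofinite decorations, and since $\prod_n A_n$ is no longer compact the prefix length $N$ bounding the use of $\Psi$ must be chosen inputwise rather than uniformly. Designing the decorations to keep the Medvedev-relevant content of $[T_\star]$ concentrated on $\{f_0,f_1\}$, and verifying that the splicing still produces a Turing-incomparability contradiction in this non-compact setting, is the technical heart of the proof.
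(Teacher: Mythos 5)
Your construction for $\star={\sf fin}$ is correct, and it in fact settles $\star={\sf aof}$ and $\star={\sf foc}$ as well without any modification: a tree in which every node has one or two successors already satisfies the ``all or finite'' and ``finite or cofinite'' branching conditions, so the same two-point $[T_\star]=\{f_0,f_1\}$ is an instance of $\dc{aof}$ and $\dc{foc}$. Your route is genuinely different from the paper's. The paper takes two Medvedev-incomparable instances $A_0,A_1\in\ac{\star}$ and forms $C=0\fr A_0\cup 1\fr A_1\in\dc{\star}$; assuming $C\equiv_M H$ with $H$ homogeneous, it splices a finite prefix $\sigma$ of $H$ that decides the first output value of the backward reduction into the image of the forward reduction, concluding $A_{1-i}\leq_M A_i$ and contradicting incomparability. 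You splice inside the product $B$ itself and work at the level of Turing degrees of two designated $\Sigma^1_1$-singleton (equivalently, hyperarithmetic) paths. Both proofs exploit the same weakness of homogeneous sets---coordinates of two members can be freely recombined---but yours needs only a pair of Turing-incomparable $\Delta^0_2$ reals, whereas the paper's needs Medvedev-incomparable instances of $\ac{\star}$; in exchange, the paper's argument is insensitive to the shape of $[T]$ and transfers to every $\star$ uniformly.

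That uniformity is exactly what your proof lacks, and $\star={\sf cof}$ is a genuine gap rather than a technicality. In a ${\sf cof}$-branching tree every node has cofinitely many successors, so there are no dead ends (the empty set is not cofinite) and your ``cofinite dead-end decorations'' cannot exist; moreover $[T]$ is automatically uncountable with no isolated paths, and indeed any sufficiently fast-growing function can be threaded through $T$ above any node. Hence the Medvedev degree of $[T]$ is governed by its easiest members, not by two designated paths, and there is no way to force the backward reduction $\Psi$ to take values in $\{f_0,f_1\}$---which is the step your contradiction hinges on. The repair is the one the paper uses implicitly (its later remark about $2\N\fr T_0\cup(2\N+1)\fr T_1$): interleave two Medvedev-incomparable instances $A_0,A_1\in\ac{cof}$ on the even and odd successors of the root, so that every branching set stays cofinite while the first output coordinate of the backward reduction still decides which $A_i$ one lands in; the homogeneity-splicing then runs as before. (In either version of the argument one should take the pair---$A_0,A_1$ or $f_0,f_1$---Medvedev-, respectively Turing-, \emph{incomparable} rather than merely inequivalent, since the splicing a priori yields only one direction of reducibility; your write-up does this correctly.)
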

\begin{proof}
  Simply take $A_0$ and $A_1$ in $\ac{fin}$ with are not Medvedev equivalent, and consider $C=0\fr A_0\cup1\fr A_1$, which is in $\dc{fin}$.
  Now, toward a contradiction, suppose also that there exists $H$ in $\ac{}$ (actually there is no need for $H$ to be $\Sigma^1_1$) such that $C\equiv_MH$.
  Let $\phi$ and $\psi$ be witness of this, i.e $\phi$ (resp. $\psi$) is total on $C$ (resp. $H$) and its image is included in $H$ (resp. $C$).
  
  Now, we describe a way for some $A_{i}$ to Medvedev compute $A_{1-i}$: Let $i\in 2$ and $\sigma$ be extensible in $H$ such that $\psi(\sigma;0)=1-i$. Given $x\in A_{i}$, apply $\phi$ on $i\fr x$ to obtain an element $y$ of $H$. Replace the beginning of $y$ by $\sigma$ and apply $\phi$: by homogeneity, $y$ with $\sigma$ as beginning is still in $H$, and the result has to be in $(1-i)\fr A_{1-i}$.

  For other values of $\star$, the proof is very similar.
\end{proof}

Note that the above proof used the fact that there always exists infinum in $\dc{\star}$ while this is not clear in $\ac{\star}$. However, using Weihrauch reducibility, dependent and independent choices are equivalent:
\begin{Theorem}
  $\ac{fin}\equiv_W\dc{fin}$
\end{Theorem}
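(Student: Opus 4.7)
The plan is to establish both Weihrauch reductions making up the equivalence, handling the easy direction by a direct tree construction and routing the harder direction through $\Sigma^1_1\mbox{-}{\sf KC}_{\N^\N}$.

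The easier direction $\ac{fin}\leq_{\sf W}\dc{fin}$ I would handle directly: given a uniformly $\Sigma^1_1$ sequence $(S_n)_{n\in\N}$ of nonempty finite subsets of $\N$, I form the tree
\[T := \{\sigma\in\omega^{<\omega} : \sigma(i)\in S_i \text{ for every } i<|\sigma|\}.\]
This is a $\Sigma^1_1$ tree in which $\{k : \sigma\fr k \in T\} = S_{|\sigma|}$ is finite for every $\sigma\in T$, so $T$ is a valid $\dc{fin}$-instance, and any path through it is precisely a sequence of choices $(c_n)_n$ with $c_n\in S_n$.

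For the harder direction $\dc{fin}\leq_{\sf W}\ac{fin}$, the plan is to route through the chain
\[\dc{fin} \;\leq_{\sf W}\; \Sigma^1_1\mbox{-}{\sf KC}_{\N^\N} \;\equiv_{\sf W}\; \widehat{\Sigma^1_1\mbox{-}{\sf C}_2} \;\leq_{\sf W}\; \widehat{\Sigma^1_1\mbox{-}{\sf C}^{\sf fin}_{\N}} \;=\; \ac{fin}.\]
Given a $\Sigma^1_1$ finitely branching tree $T$ with $[T]\neq\emptyset$, I would observe that $[T]$ is a nonempty $\Sigma^1_1$-closed subset of $\Baire$ and is compact: the finite branching forces each level of $T$ to be finite by induction from the root, so the level-wise maxima of the branching factors give a pointwise bound on paths. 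Hence $[T]$ is directly a valid $\Sigma^1_1\mbox{-}{\sf KC}_{\N^\N}$-instance whose solutions are exactly the paths through $T$. The middle equivalence is Proposition~\ref{prop:compact-prouduct-of-two}, and the final inequality is immediate because every two-element set is finite.

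The main technical step, and the only non-elementary ingredient in this plan, is the $\Sigma^1_1$-boundedness argument inside the proof of Proposition~\ref{prop:compact-prouduct-of-two}: it ensures that although there is no uniformly computable bound on the branching factors across nodes of $T$, the compact $\Sigma^1_1$ set $[T]$ can nevertheless be Weihrauch-uniformly encoded as a $\Sigma^1_1$-closed subset of Cantor space. Once this is granted, the rest of the argument is a straightforward assembly of results already established earlier in the paper.
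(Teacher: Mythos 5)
Your proof is correct and follows essentially the same route as the paper: the paper's one-line chain is $\ac{fin}\leq_W\dc{fin}\leq_W\Sigma^1_1\mbox{-}{\sf WKL}\leq_W\widehat{\Sigma^1_1\mbox{-}{\sf C}_2}\leq_W\ac{fin}$, and your version merely replaces the intermediate node $\Sigma^1_1\mbox{-}{\sf WKL}$ by $\Sigma^1_1\mbox{-}{\sf KC}_{\N^\N}$, which Proposition~\ref{prop:compact-prouduct-of-two} shows is the same degree via the same $\Sigma^1_1$-boundedness argument you cite. The extra detail you supply (the explicit tree for the easy direction, the finiteness-of-levels argument for compactness of $[T]$) is all correct and just fills in what the paper calls ``clear.''
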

\begin{proof}
  It is clear by Fact~\ref{KMP} that we have $\ac{fin}\leq_W\dc{fin}\leq_W\Sigma^1_1\mbox{-}{\sf WKL}\leq_W\widehat{\Sigma^1_1\mbox{-}{\sf C}_2}\leq_W\ac{fin}$.
\end{proof}

\subsection{The Medvedev lattices of $\ac{fin}$ and $\dc{fin}$}

In this section we examine the Medvedev degree structure of
$\Sigma^1_1$ choice for finite sets. We already have defined the
compact choice $\Sigma^1_1\mbox{-}{\sf KC}_{\N^\N}$ in
Section~\ref{subsection:compact-choice}, which is clearly the same problem as
$\dc{fin}$ up to the coding of the instance. In
Proposition~\ref{prop:compact-prouduct-of-two} we proved that for dependant
choice, the finite choice can always be weakened to independent
choice over 2 possibility, making $\ac{fin}=_{\sf W}\dc{fin}$.

In the following, we are interested in a finer analysis of $\ac{fin}$
and $\dc{fin}$ using Medvedev reducibility. In particular, we show
that upward density does not hold in both of these lattices: Indeed,
we show that there is a single nonempty compact homogeneous
$\Sigma^1_1$ set coding all information of nonempty compact
$\Sigma^1_1$ sets.  This can be viewed as an effective version of
Dellacherie's theorem (cf.\ Steel \cite{Ste80}) in descriptive set
theory.

\begin{Theorem}\label{th:fin-maximum}
  There exists a maximum in the Medvedev lattices of $\ac{fin}$ and in
  $\dc{fin}$. In other words, there exists $A\in\ac{fin}$ such that
  for every $B\in\dc{fin}$, $B\leq_M A$.
\end{Theorem}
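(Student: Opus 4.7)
The plan is to construct a single nonempty compact homogeneous $\Sigma^1_1$-set $A$, acting as a ``universal instance'', from which a choice function Medvedev-computes a choice function for every computable $B\in\dc{fin}$. First, by the proof of Proposition~\ref{prop:compact-prouduct-of-two}, every computable $B\in\dc{fin}$ is, after trimming at an $\omCK$-bounded stage obtained via $\Sigma^1_1$-boundedness, computably isomorphic to a homogeneous $\Sigma^1_1$-closed subset of $2^\omega$, i.e., to a product of $\Sigma^1_1$-subsets of $\{0,1\}$. Thus it suffices to build a single $A\in\ac{fin}$ that Medvedev-majorises every product of nonempty $\Sigma^1_1$-subsets of $\{0,1\}$, i.e., every computable instance of $\widehat{\Sigma^1_1\mbox{-}{\sf C}_2}$.

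Second, I would build $A$ as a universal product $A=\prod_e C_e$ indexed by all $\Sigma^1_1$-indices $e$, where each $C_e$ is a uniformly $\Sigma^1_1$-definable nonempty finite subset of $\N$ encoding the information of the $e$-th $\Sigma^1_1$-set $S_e$. The target property is that from a member of $C_e$, one can computably extract a member of $S_e$ whenever $S_e$ is a valid nonempty subset of $\{0,1\}$. Then given any computable instance $(S_{g(n)})_n$ of $\widehat{\Sigma^1_1\mbox{-}{\sf C}_2}$, reading off the $g(n)$-th coordinate of a member of $A$ and decoding yields a choice function for $(S_{g(n)})_n$, establishing the Medvedev reduction.

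The main obstacle I anticipate is the uniform $\Sigma^1_1$-definable construction of $C_e$. Naive attempts such as $C_e=S_e\cup\{0\}$ fail: an adversarial choice may always return the padding element $0$, conveying no information, and the side-conditions that would let us discriminate (``$S_e=\emptyset$'' or ``$0\notin S_e$'') are $\Pi^1_1$ and cannot be absorbed into the $\Sigma^1_1$-definition of $C_e$. I plan to overcome this using $\Sigma^1_1$-boundedness in the spirit of Proposition~\ref{prop:compact-prouduct-of-two}: since a finite $\Sigma^1_1$-set $S_e$ stabilises at some stage below $\omCK$, the assignment $e\mapsto$ (stabilisation stage of $S_e$), when restricted to the ``good'' indices, is uniformly bounded below $\omCK$. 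This should allow a uniform $\Sigma^1_1$-definition of $C_e$ via an ordinal-bounded approximation of $S_e$, while defaulting harmlessly on ``bad'' indices. Once $C_e$ is defined, verifying that $A=\prod_e C_e\in\ac{fin}$ and that the composite reduction (binary reduction plus coordinate decoding) is computable will be routine.
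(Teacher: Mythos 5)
Your overall architecture (reduce $\dc{fin}$ to computable instances of $\widehat{\Sigma^1_1\mbox{-}{\sf C}_2}$ via Fact~\ref{KMP}, then build one universal product $\prod_e C_e$ indexed by all $\Sigma^1_1$-indices) matches the paper's, and you correctly isolate the crux: defining $C_e$ uniformly in $\Sigma^1_1$ so that it is nonempty for \emph{every} $e$ yet equals (or determines) $S_e$ on the good indices. But the mechanism you propose for this step fails. The class of ``good'' indices (those $e$ with $S_e$ a nonempty finite subset of $\{0,1\}$, or nonempty finite in general) is $\Pi^1_1$, not $\Sigma^1_1$ --- finiteness of a $\Sigma^1_1$ subset of $\N$ is a $\Pi^1_1$ condition --- so $\Sigma^1_1$-boundedness does not apply to the map $e\mapsto(\text{stabilisation stage of }S_e)$ restricted to good indices. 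And the conclusion you want is in fact false: take $S_e=\{0\}\cup\{1: T_e\text{ is ill-founded}\}$ for computable trees $T_e$; every such $S_e$ with $T_e$ well-founded is a good instance, but its approximation stabilises only around the rank of $T_e$, so the stabilisation stages are cofinal in $\omCK$. (A uniform bound $\alpha_0$ would let you decide well-foundedness of computable trees by inspecting stage $\alpha_0$.) This is a different situation from Proposition~\ref{prop:compact-prouduct-of-two}, where boundedness is applied to a single fixed compact instance, for which totality of $\sigma\mapsto\alpha_\sigma$ is guaranteed by the hypothesis on that instance.

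The paper closes this gap without any uniform bound, by a per-index ``freeze before emptiness'' trick: run the $\Delta^1_1$ approximation $(S_{e,\alpha})_{\alpha<\omCK}$ and set $T_{e,\alpha}=S_{e,\alpha}$ as long as this is nonempty; by compactness (or finiteness, in the binary case) the first stage at which $S_{e,\alpha}$ becomes empty must be a successor $\beta+1$, so one can freeze $T_e$ at the nonempty value $S_{e,\beta}$ forever after. This yields a uniformly $\Sigma^1_1$, everywhere nonempty sequence $(T_e)$ with $T_e=S_e$ whenever $S_e\neq\emptyset$, which is exactly what your $C_e$ needs to be. One smaller point: your claim that every computable $B\in\dc{fin}$ is \emph{computably isomorphic} to a homogeneous subset of $2^\om$ is an overstatement contradicted by Proposition~\ref{prop:notEquiv}; what the Weihrauch equivalence gives, and all you need, is Medvedev reducibility of $B$ to some computable homogeneous binary instance.
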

\begin{proof}
  To construct a greatest element in $\dc{fin}$, we only need to
  enumerate all nonempty compact $\Sigma^1_1$ sets
  $S_e\subseteq\om^{\om}$.  Consider a $\Delta^1_1$ approximation
  $(S_{e,\alpha})_{\alpha<\omCK}$ of $S_e$.  Note that emptiness of
  $S_e$ is a $\Pi^1_1$-property, and therefore, if $S_e=\emptyset$,
  then it is witnessed at some stage $\alpha<\omCK$.  Let $\alpha$ be
  the least ordinal such that $S_{e,\alpha}$ is empty.  By compactness
  of $S_e$, such an $\alpha$ must be a successor ordinal.

  Now we construct a uniform sequence $(T_e)_{e\in\om}$ of nonempty
  $\Sigma^1_1$ sets such that if $S_e\not=\emptyset$ then $S_e=T_e$.
  Define $T_{e,0}=\N^\N$, and for any $\alpha>0$,
  $T_{e,\alpha}=S_{e,\alpha}$ if $S_{e,\alpha}\not=\emptyset$.  If
  $\alpha>0$ is the first stage such that $S_{e,\alpha}=\emptyset$,
  then $\alpha$ is a successor ordinal, say $\alpha=\beta+1$, and
  define $T_{e,\gamma}=T_{e,\beta}$ for any $\gamma\geq\alpha$, and
  ends the construction.  It is not hard to check that the sequence
  $(T_e)_{e\in\om}$ has the desired property.

  As a maximal element, it suffices to take the product of all
  $T_n$. Note that by the fact that $\ac{fin}\equiv_W\dc{fin}$, it
  also shows the maximality result for $\ac{fin}$.
\end{proof}

Even if lattices of dependent and independent choice share a common
maximum, they still have structural differences. The most evident one
is the existence of infinums: Given two $\Sigma^1_1$ trees $T_1$ and
$T_2$, it is easy to create a tree $T$ such that $[T]$ is the infinum
of $[T_0]$ and $[T_1]$, by considering for example
$0\fr T_0\cup1\fr T_1$, or $2\N\fr T_0\cup (2\N+1)\fr T_1$ depending
on the restriction on the dependent choice. However, this is not
possible when the trees are homogeneous as in the independent
choice. We now prove that $\ac{fin}$ has infinum for pairs in
$\ac{aof}$, by first showing that below any $\Sigma^1_1$ compact set,
there is a greatest homogeneous degree.

\begin{Theorem}\label{th:infinum}
  For every $A$ in $\dc{fin}$, there exists $X\leq_MA$ in $\ac{aof}$
  such that:
  \[\forall Y\in\ac{}\ [Y\leq_MA\implies Y\leq_M X].\]
\end{Theorem}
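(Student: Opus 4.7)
The plan is to realise the desired $X$ by extracting, directly from $A$, the homogeneous data stored on each level of a pruned representing tree. Writing $A = [T]$ for a $\Sigma^1_1$ tree $T \subseteq \om^{<\om}$, I would replace $T$ by its pruning $T^{\ast} = \{\sigma : (\exists x \in [T])\, \sigma \prec x\}$, which is again $\Sigma^1_1$ and has the same set of paths. A standard compactness argument (an accumulation point produced from an infinitely-branching node in a pruned subtree would contradict compactness of $A$) shows that $T^{\ast}$ is finitely branching. Setting $X_k = T^{\ast} \cap \om^k$ yields a finite, nonempty, uniformly $\Sigma^1_1$ subset of $\om^k$ (viewed inside $\N$ via a computable coding), and I take $X = \prod_k X_k$. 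This $X$ is a $\Sigma^1_1$ instance of $\ac{fin}$, hence in particular of $\ac{aof}$.

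The reduction $X \leq_M A$ is witnessed by the truncation map $\Phi(x) = (x \upto k)_{k \in \om}$: for $x \in A$ we have $x \upto k \in T^{\ast} \cap \om^k = X_k$, so $\Phi(x) \in X$, and $\Phi$ is plainly computable.

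The bulk of the argument is to show $Y \leq_M X$ for every $Y = \prod_n Y_n$ in $\ac{}$ with $Y \leq_M A$. Fixing a computable witness $\phi_{e_0}$ of $Y \leq_M A$, I propose to define the reduction $\psi \colon X \to Y$ by $\psi(\alpha)(n) = \phi_{e_0}(\alpha(k))(n)$ where $k$ is the first index at which the right-hand side converges. The main obstacle is to verify simultaneously that this search terminates and that the returned value lies in $Y_n$. Termination uses both pruning and compactness: since $T^{\ast}$ is pruned, $\alpha(k)$ extends to some $x \in A$; since $\phi_{e_0}$ is total on $A$ and $A$ is compact, the use of $\phi_{e_0}(\cdot)(n)$ is uniformly bounded across $A$, so the computation must converge once $k$ is large enough. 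Correctness follows because a converging computation of $\phi_{e_0}(\alpha(k))(n)$ uses at most the $k$ bits available, so the value equals $\phi_{e_0}(x)(n)$ for any $x \in A$ extending $\alpha(k)$; hence $\psi(\alpha)(n) \in \{\phi_{e_0}(x)(n) : x \in A\} \subseteq Y_n$.

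The delicate point is really this interplay between pruning (so that each coordinate $\alpha(k)$ is genuine data about $A$) and compactness (so that the use of $\phi_{e_0}$ is bounded and the search halts); dropping either would break the reduction. The $\Sigma^1_1$ bookkeeping for the codes themselves is routine, since $T^{\ast}$ and each $X_k$ arise from standard $\Sigma^1_1$-preserving operations.
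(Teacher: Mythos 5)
Your proof is correct, but it takes a genuinely different route from the paper's. The paper builds $X=\prod_e\prod_n S^e_n$ by running over all candidate reductions: for each $e$, the factor $S^e_n$ waits for $\Phi_e$ to be total on $A$ and for the image $\Phi_e(A;n)$ to become finite (guaranteed by compactness of $A$), and then shrinks to that finite set; universality is then essentially by construction, since if $\Phi_e$ witnesses $Y\leq_M A$ then $\prod_n S^e_n$ is contained in the smallest homogeneous superset of $\Phi_e(A)$, hence in $Y$, and projecting onto the $e$-th block gives $Y\leq_M X$. The cost is that a factor stays equal to $\N$ whenever the wait never terminates, which is why the paper only places $X$ in $\ac{aof}$, and the verification of $X\leq_M A$ needs a word of care on coordinates where $\Phi_e$ may diverge. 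You instead take the canonical, reduction-free object $X=\prod_k(T^\ast\cap\om^k)$, which lies in $\ac{fin}$ --- a slightly stronger conclusion --- and for which $X\leq_M A$ is just truncation; the work moves entirely into the universality claim, where your three ingredients (pruning, so each node of $T^\ast\cap\om^k$ is an initial segment of a genuine member of $A$; compactness, giving a uniform use bound so the search over $k$ halts; and homogeneity of $Y$, which absorbs the fact that different coordinates of $\psi(\alpha)$ are computed from pairwise incompatible nodes) are exactly the right ones. Two small remarks: implement ``the first $k$ at which $\phi_{e_0}(\alpha(k))(n)$ converges'' by dovetailing over $k$ and computation steps, since divergence at a fixed $k$ cannot be detected --- this is harmless, because any halting computation on a node of $T^\ast$ returns a value of the form $\phi_{e_0}(x)(n)$ with $x\in A$; and note that the paper's enumeration-based template, unlike your tree-level trick, carries over to the non-compact all-or-finite setting (compare Theorems \ref{th:equ-ac-dc-aof} and \ref{th:max-aof}), so it is worth understanding both arguments.
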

\begin{proof}
  We will define $X$ to be equal to $\prod_e\prod_nS^e_n$, with the
  following requirement: if $\phi_e$ is total on $A$, then
  $\prod_nS^e_n$ is included in the smallest homogeneous superset of
  $\Phi_e(A)$.  $S^e_n$ is defined by the following $\Sigma^1_1$ way:
  First wait to see that $\Phi_e$ is total on $A$. If it happens, and
  wait for $\Phi_e(A;n)$ to be finite, which has to happen by
  compactness of $A$. Then, remove everything but the values
  $\Phi_e(A;n)$.

  To conclude, if $Y\in\ac{}$ is such that $Y\leq_MX$ as witnessed by
  $\Phi_e$, then $Y\in\leq_MX$ as $\prod_nS^e_n\subseteq Y$.
\end{proof}
\begin{Corollary}
  For every $A,B\in\ac{fin}$, there exists $X\in\ac{aof}$ such that
  $X\leq_MA,B$ and
  \[\forall Y\in\ac{}\ [(Y\leq_MA\ \land\ Y\leq_M B)\implies Y\leq_M X].\]
\end{Corollary}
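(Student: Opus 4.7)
The plan is to reduce to Theorem~\ref{th:infinum} after replacing $A$ and $B$ by their Medvedev infimum, which lives in $\dc{fin}$ rather than in $\ac{fin}$. Set
\[C = 0\fr A \cup 1\fr B.\]
Since $A$ and $B$ lie in $\ac{fin}$, they are the path sets of homogeneous compact $\Sigma^1_1$ trees with finite branching at every node. The tree coding $C$ branches on $\{0,1\}$ at the root and then mimics the tree of $A$ or of $B$, so it is still a $\Sigma^1_1$ tree with finite branching at every node; hence $C\in\dc{fin}$.

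I claim that $C$ is the Medvedev infimum of $A$ and $B$. The prefixing maps $x\mapsto 0\fr x$ and $y\mapsto 1\fr y$ witness $C\leq_M A$ and $C\leq_M B$. Conversely, given any $Z$ with $Z\leq_M A$ via $\Phi$ and $Z\leq_M B$ via $\Psi$, the functional that reads the first coordinate of an input $z\in C$ and then applies $\Phi$ or $\Psi$ to the remainder witnesses $Z\leq_M C$. This is precisely the ``$0/1$-tagged disjoint union'' construction already exploited in the proof of Proposition~\ref{prop:notEquiv}.

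Now apply Theorem~\ref{th:infinum} to $C$ to obtain $X\in\ac{aof}$ with $X\leq_M C$ such that $Y\leq_M C$ implies $Y\leq_M X$ for every $Y\in\ac{}$. Composing reductions yields $X\leq_M A$ and $X\leq_M B$, so $X$ is a common lower bound. For the universal property, any $Y\in\ac{}$ with $Y\leq_M A$ and $Y\leq_M B$ also reduces to their Medvedev infimum $C$ by the claim above, and hence $Y\leq_M X$ by the choice of $X$. There is no real obstacle here; the content is packaged entirely in Theorem~\ref{th:infinum}, and the only point requiring care is the verification that the disjoint-union construction $C$ remains inside $\dc{fin}$, which is immediate from the preservation of finite branching.
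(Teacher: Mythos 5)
Your proposal is correct and is exactly the paper's argument: the paper's proof is the one-liner ``apply Theorem~\ref{th:infinum} to $0\fr A\cup 1\fr B$,'' and you have simply spelled out the (routine) verifications that this disjoint union lies in $\dc{fin}$ and is the Medvedev infimum of $A$ and $B$.
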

\begin{proof}
  Just apply Theorem~\ref{th:infinum} to $0\fr A\cup1\fr B$.
\end{proof}

As a special property of $\Sigma^1_1$ compact sets, we have the following analog
of the hyperimmune-free basis theorem.  For $p,q\in\N^\N$ we say that
$p$ is {\em higher Turing reducible to $q$} (written $p\leq_{hT}q$) if
there is a partial $\Pi^1_1$-continuous function
$\Phi\pcolon\N^\N\to\N^\N$ such that $\Phi(q)=p$ (see
Bienvenu-Greenberg-Monin \cite{BGM17} for more details).

\begin{Lemma}\label{lemma:higher-hyperimmunefree}
  For any $\Sigma^1_1$ compact set $K\subseteq \N^\N$ there is an
  element $p\in K$ such that every $f\leq_{hT}p$ is majorized by a
  $\Delta^1_1$ function.
\end{Lemma}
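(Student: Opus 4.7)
The plan is to mimic the classical hyperimmune-free basis theorem in the higher setting, substituting $\Sigma^1_1$-compact sets for $\Pi^0_1$ classes, partial $\Pi^1_1$-continuous functionals for Turing functionals, and $\Delta^1_1$ majorizers for computable ones. I would enumerate the partial $\Pi^1_1$-continuous functionals $(\Phi_e)_{e \in \N}$ and construct a decreasing sequence $K = K_0 \supseteq K_1 \supseteq \cdots$ of non-empty $\Sigma^1_1$-compact subsets of $\N^\N$, and then take any $p \in \bigcap_e K_e$, which is non-empty by higher compactness.

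At stage $e$, the construction splits into two cases. In the first case, there exist $q \in K_e$ and $n \in \N$ with $\Phi_e^q(n)\uparrow$; then I fix the lexicographically first such pair and set $K_{e+1} = \{q \in K_e : \Phi_e^q(n)\uparrow\}$. This is a non-empty $\Sigma^1_1$ set, since ``$\Phi_e^q(n)\downarrow$'' is a $\Pi^1_1$ predicate in $q$ (the higher analogue of $\Sigma^0_1$), and it is compact as a closed subset of the compact $K_e$. In the second case, $\Phi_e$ is total on all of $K_e$, and I set $K_{e+1} = K_e$; the obligation at this stage then becomes producing a $\Delta^1_1$ majorizer $g_e$ for the image.

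The main difficulty is precisely this second case: showing that if $\Phi_e$ is total on the non-empty $\Sigma^1_1$-compact set $K_e$, then $g_e(n) = \max_{q \in K_e} \Phi_e^q(n)$ exists and is $\Delta^1_1$. I plan to handle this by a $\Sigma^1_1$-boundedness argument analogous to the one used in the proof of Proposition~\ref{prop:compact-prouduct-of-two}: for every $q \in K_e$ and $n \in \N$, convergence $\Phi_e^q(n)\downarrow$ is witnessed at some stage $\alpha(q,n) < \omCK$; compactness combined with $\Sigma^1_1$-boundedness then yields a single ordinal $\alpha_e < \omCK$ bounding all of them, from which $g_e$ is computable and thus $\Delta^1_1$. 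Granting this, the conclusion is immediate: pick $p \in \bigcap_e K_e$, and if $f \leq_{hT} p$ is witnessed by $\Phi_e$, then the second case must have occurred at stage $e$, so $f(n) = \Phi_e^p(n) \leq g_e(n)$ for every $n$, giving the desired $\Delta^1_1$ majorizer.
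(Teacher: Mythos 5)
Your proposal is correct and follows essentially the same route as the paper: the same hyperimmune-free-basis-style construction of a decreasing sequence of nonempty $\Sigma^1_1$ compact sets using the divergence sets $\{q:\Phi_e^q(n)\uparrow\}$, with the same dichotomy at each stage and a $\Sigma^1_1$-boundedness/compactness argument to extract a $\Delta^1_1$ majorizer in the totality case (which the paper simply cites from Kihara--Marcone--Pauly). No substantive difference.
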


\begin{proof}
  Let $(\psi_e)$ be a list of higher Turing reductions.  Let $K_0=K$.
  For each $e$, let $Q_{e,n}=\{x\in \N^\N:\psi_e^x(n)\uparrow\}$.
  Then $Q_{e,n}$ is a $\Sigma^1_1$ closed set.  If $K_e\cap Q_{e,n}$
  is nonempty for some $n$, define $K_{e+1}=K_e\cap Q_{e,n}$ for such
  $n$; otherwise define $K_{e+1}=K_e$.  Note that if $K_e\cap Q_{e,n}$
  is nonempty for some $n$, then $\psi_e^x$ is undefined for any
  $x\in K_{e+1}$.  If $K_e \cap Q_{e,n}$ is empty for all $n$, then
  $\psi_e$ is total on the $\Sigma^1_1$ compact set $K_e$, one can
  find a $\Delta^1_1$ function majorizing $\psi_e^x$ for all
  $x\in K_e$ (cf.\ \cite{KMP}).  Define $K_\infty=\bigcap_nK_n$, which
  is nonempty.  Then, for any $p\in K_\infty$, every $f\leq_{hT}p$ is
  majorized by a $\Delta^1_1$ function.
\end{proof}

Note that continuity of higher Turing reduction is essential in the
above proof.  Indeed, one can show the following:

\begin{Proposition}
  There is a nonempty $\Sigma^1_1$ compact set $K\subseteq \N^\N$ such
  that for any $p\in K$, there is $f\leq_{T}p'$ dominates all
  $\Delta^1_1$ functions.
\end{Proposition}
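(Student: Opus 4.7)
The plan is to exploit Harrison's theorem on jump hierarchies along pseudo-well-orderings, cited in the introduction. Fix a computable pseudo-well-ordering $\prec$ on $\N$ whose well-founded part has order type $\omCK$, and let $H=(H_n)_{n\in\N}$ be its unique jump hierarchy, furnished by Harrison. Since the $\prec$-ranks in the well-founded part are cofinal in $\omCK$, the join $H$ is Turing-equivalent to Kleene's $\mathcal{O}$, and hence uniformly enumerates every $\Delta^1_1$ function $(f_i)_{i\in\N}$. In particular, the function $g(n):=1+\max_{i\leq n}f_i(n)$ is computable from $H$ and dominates every $\Delta^1_1$ function, so it suffices to construct a nonempty $\Sigma^1_1$-compact $K$ such that $H\leq_T p'$ for every $p\in K$.

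Next, I would take $K\subseteq 2^{\N}$ to be a $\Sigma^1_1$-compact set of ``approximation arrays'' to $H$. View $p\in K$ as a binary function $p(n,e,s)$ meant to be a $0/1$-valued guess at stage $s$ for the bit $H_n(e)$. The defining conditions on $p$ are, first, that $p(n,e,\cdot)$ flips its value only when a $\Sigma^1_1$-witness is enumerated---exploiting that $H_n$ is $\Delta^1_1$ for $n$ in the well-founded part of $\prec$, so that both ``$e\in H_n$'' and ``$e\notin H_n$'' admit $\Sigma^1_1$-witnesses---and, second, that the stabilization stage of $s\mapsto p(n,e,s)$ is bounded by a fixed ordinal $\alpha^{\ast}<\omCK$ obtained via $\Sigma^1_1$-boundedness, in the same spirit as the tree-collapsing argument in the proof of Proposition~\ref{prop:compact-prouduct-of-two}. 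Both conditions are $\Sigma^1_1$, hence $K$ is $\Sigma^1_1$; the coordinates of $p$ lie in $\{0,1\}$, so $K$ is compact; and the canonical approximation array read off from $H$ itself is a member, so $K$ is nonempty.

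For each $p\in K$, the limit $\lim_s p(n,e,s)$ is computable from $p'$ by the standard $\Delta^0_2(p)$-operation, and by construction this limit equals $H_n(e)$ for every $(n,e)$. Hence $H\leq_T p'$, and so $g\leq_T p'$ is the promised function dominating all $\Delta^1_1$ functions.

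The main obstacle is ensuring that $K$ is genuinely $\Sigma^1_1$ rather than $\Pi^1_1$: naively, the condition ``the approximation converges to $H$'' is $\Pi^1_1$, because the negative bits of $H_n$ for $n$ in the ill-founded part of $\prec$ are $\Pi^1_1$ facts. The resolution combines the $\Delta^1_1$-ness of $H_n$ on the well-founded part (which makes both positive and negative bits $\Sigma^1_1$-witnessable) with $\Sigma^1_1$-boundedness (which collapses the per-coordinate ordinal stabilization stages into a single uniformly definable stage below $\omCK$). Carrying out this coding cleanly is where Harrison's construction is essential.
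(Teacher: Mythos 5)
Your approach has several genuine gaps, the most serious of which concerns the definability and nonemptiness of your set $K$. First, the jump hierarchy on a Harrison order is neither unique nor Turing-equivalent to $\mathcal{O}$: the set of jump hierarchies for a fixed computable pseudo-well-ordering is a nonempty arithmetical (hence $\Sigma^1_1$) set, so by the Gandy basis theorem it has a member $H$ with $\omega_1^H=\omCK$, and such an $H$ cannot compute $\mathcal{O}$; moreover any such $H$ is non-hyperarithmetic, so a set of ``approximation arrays to $H$'' defined with $H$ as a parameter is only $\Sigma^1_1(H)$, not $\Sigma^1_1$. Second, the $\Sigma^1_1$-boundedness step fails: for $n$ in the well-founded part of $\prec$ with rank $\alpha$, the column $H_n$ is Turing-equivalent to the $\alpha$-th iterated jump, and the ordinal stage at which any monotone $\Sigma^1_1$-style approximation to $H_n$ stabilizes is at least $\alpha$; these stages are cofinal in $\omCK$, so there is no single $\alpha^\ast<\omCK$ bounding all coordinates. (This is unlike the situation in Proposition~\ref{prop:compact-prouduct-of-two}, where the witnessing ordinals form a bounded $\Sigma^1_1$ family.) Consequently either the canonical array fails your membership condition and $K$ is empty, or you drop the bound and lose $\Sigma^1_1$-ness. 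Third, for $n$ in the ill-founded part $H_n$ is not $\Delta^1_1$ at all, so it admits no such approximation; and even if you recover only the well-founded columns from $p'$, you obtain each $\Delta^1_1$ function non-uniformly, whereas forming $g(n)=1+\max_{i\leq n}f_i(n)$ requires a uniform enumeration.

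The paper's proof avoids all of this and is much shorter: let $(\varphi_e)$ enumerate the partial $\Pi^1_1$ functions $\varphi_e\pcolon\om\to 2$, let $S_e\subseteq 2^\om$ be the set of total extensions of $\varphi_e$ (nonempty, $\Sigma^1_1$, closed in $2^\om$, as in Proposition~\ref{prop:compact-choice-total-DNC}), and put $K=\prod_eS_e$. Every $p\in K$ computes every total $\Delta^1_1$ function, non-uniformly, because a total $\Delta^1_1$ function into $2$ equals some $\varphi_e$ and then $S_e$ is a singleton. The non-uniformity is then absorbed by taking the $p'$-computable function that dominates all total $p$-computable functions (the relativized busy-beaver), which in particular dominates all $\Delta^1_1$ functions. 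Note that this last trick requires the $\Delta^1_1$ functions to be computable from $p$ itself; in your setup they would only be computable from $p'$, which would push the dominant up to $p''$.
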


\begin{proof}
  Let $(\varphi_e)$ be an effective enumeration of all partial
  $\Pi^1_1$ functions $\varphi_e\pcolon\om\to 2$.  As in the argument
  in Proposition \ref{prop:compact-choice-total-DNC} or Proposition
  \ref{prop:aouc-total-n}, one can see that the set $S_e$ of all
  two-valued totalizations of the partial $\Pi^1_1$ function
  $\varphi_e$ is nonempty and $\Sigma^1_1$.  Then the product
  $K=\prod_eS_e$ is also a nonempty $\Sigma^1_1$ subset of $2^\om$.
  It is clear that every $p\in K$ (non uniformly) computes any total
  $\Delta^1_1$ function on $\om$.  Let $BB$ be a total $p'$-computable
  function which dominates all $p$-computable functions.  In
  particular, $BB\leq_Tp'$ dominates all $\Delta^1_1$ functions.
\end{proof}

\subsection{The Medvedev lattices of $\ac{aof}$ and $\dc{aof}$}

We now discuss about choice, when the sets from which we choose can be
either everything, or finite. We will show that under the Weihrauch
scope, this principle is a robust one that is strictly above
$\dc{fin}$. It also share with the latter that dependent or
independent choice does not matter, and the existence of a maximal
element containing all the information, with very similar proof as for
$\dc{fin}$.

In Proposition~\ref{prop:aouc-total-n}, we showed that $\ac{aou}$ is robust. We
give two other evidences of this in the following theorems.
\begin{Theorem}
  For any $A\in\ac{aof}$, there exists $B\in\ac{aou}$ such that
  $A\leq_MB$.
\end{Theorem}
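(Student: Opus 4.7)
The plan is to encode, for each coordinate $A_n$ of the product $A = \prod_n A_n \in \ac{aof}$, the information required to select an element of $A_n$ into a family of $\Sigma^1_1$ all-or-unique sets. The powering observation is that the $\sf aof$ hypothesis makes ``$A_n = \N$'' a $\Sigma^1_1$ condition (it is equivalent to ``$A_n$ contains infinitely many elements'') while ``$A_n$ is finite'' is $\Pi^1_1$. This matches the structural dichotomy of $\sf aou$ $\Sigma^1_1$ sets revealed by the proof of Proposition~\ref{prop:aouc-total-n}, where the singleton case corresponds to a $\Pi^1_1$ condition and the $\N$-case to a $\Sigma^1_1$ condition.

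The base construction I have in mind: for each $n$, set $B^{(1)}_n := \{v \in \N : v \in A_n \vee |A_n| \geq 2\}$. Both disjuncts are $\Sigma^1_1$, so $B^{(1)}_n$ is uniformly $\Sigma^1_1$; it equals $A_n$ (a singleton) when $|A_n| = 1$, and equals $\N$ when $|A_n| \geq 2$ or $A_n = \N$. Thus $B^{(1)}_n$ is in $\ac{aou}$ and, when $|A_n| = 1$, the adversary's forced choice is the unique element of $A_n$. To handle $|A_n| \geq 2$, I plan to iterate by considering the reduced sets $A_n \setminus F$ for finite $F \subseteq \N$, defining analogously $B_{n,F} := \{v : (v \in A_n \wedge v \notin F) \vee |A_n \setminus F| \geq 2\}$. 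For each $a \in A_n$, choosing $F = A_n \setminus \{a\}$ makes $B_{n,F} = \{a\}$, a forced singleton whose value lies in $A_n$.

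The Medvedev reduction would then take a joint choice $(b_{n,F})_{n,F}$ from $B := \prod_{n,F} B_{n,F}$ and extract an element of $A_n$ for each $n$: when $A_n$ is finite there always exists some $F$ for which $B_{n,F}$ is a forced singleton encoding an element of $A_n$; when $A_n = \N$, every $B_{n,F}$ equals $\N$ and any value is acceptable. The computable extraction uses the uniformity of the $\Sigma^1_1$-definitions together with the redundancy across many $F$'s.

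The main obstacle I expect is twofold. First, there is the corner case $F \supseteq A_n$ where $B_{n,F}$ becomes empty (violating the $\sf aou$ non-emptiness requirement); handling this cleanly while preserving $\Sigma^1_1$-definability needs a careful modification (e.g., adjoining a $\Sigma^1_1$ clause that overrides the definition when $A_n \subseteq F$, or restricting the family of $F$'s used). Second, and more seriously, the computable extraction must be robust against adversarial choices: when $B_{n,F} = \N$ the adversary may output any natural, including values mimicking the forced singleton pattern. The resolution I envisage leverages the inability of the adversary to camouflage simultaneously across all $F$'s, combined with an auxiliary layer of $\sf aou$ queries that cross-validate against the $\Sigma^1_1$-definable membership relation ``$v \in A_n$''; the key point is that a value $v$ appearing as a forced singleton for any $F$ is guaranteed to be in $A_n$, so the extraction need only identify one such $v$, which by a diagonal/search argument it can do uniformly.
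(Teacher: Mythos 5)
Your high-level plan --- replace each all-or-finite coordinate $A_n$ by a family of all-or-unique sets, each of which is either $\N$ or a forced singleton lying inside $A_n$ --- is indeed the shape of the paper's argument, but the proposal has a genuine gap exactly at the point you flag as ``more serious'': the computable extraction. A Medvedev reduction is a single computable functional acting on the tuple $(b_{n,F})_{n,F}$, and it cannot tell which coordinates $B_{n,F}$ are forced singletons and which are all of $\N$; that distinction is determined by $A_n$ and is $\Sigma^1_1$/$\Pi^1_1$ information, not computable information. Since the adversary controls every $\N$-valued coordinate, it can populate them with values imitating the ``forced singleton pattern'' of a wrong finite set. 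Your observation that a value appearing as a \emph{genuinely} forced singleton is guaranteed to be in $A_n$ is true but unusable, because the reduction cannot recognize which coordinates are genuinely forced; the ``diagonal/search argument'' that is supposed to identify one is precisely the missing proof, and no obviously computable such search exists. There is also the unrepaired corner case $A_n\subseteq F$, where $B_{n,F}=\emptyset$: the natural fix of adjoining the clause ``$A_n\subseteq F$'' is a $\Pi^1_1$ disjunct and destroys $\Sigma^1_1$-definability, so this too needs a different idea (the paper avoids the analogous problem elsewhere by collapsing sets only at successor stages of a co-enumeration, so they are never emptied).

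The paper supplies the missing mechanism by choosing an indexing with a \emph{verifiable monotone structure}. For each $m$ it defines $B^m_n$ by a co-enumeration: wait for the approximation of $A_m$ to become finite, then wait for exactly $n$ further elements to be removed, and at that stage collapse $B^m_n$ to the singleton $\{c\}$ where $D_c$ codes the current finite approximation $A_m[\alpha_n]$. The genuinely forced coordinates thus code a chain of finite sets, each obtained from the previous one by deleting exactly one element, whose last member is $A_m$ itself. The reduction simply follows the chain $D_{X(0)}\supsetneq D_{X(1)}\supsetneq\cdots$ until the ``remove exactly one element'' pattern first breaks (which must happen, since $D_{X(0)}$ is finite) and outputs any element of the last set reached. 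The property your construction lacks is this: even if the adversary extends the chain beyond the genuine coordinates, every set it can legally append is a nonempty subset of the true $A_m$, so the output is correct no matter where the break is detected. Without such a self-correcting structure the extraction step of your proposal does not go through as stated.
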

\begin{proof}
  Let $A=\prod_n A_n\in\ac{aof}$. We define
  $B=\prod_{\langle m,n\rangle} B^m_n$ such that $A\leq_MB$. We will
  ensure that there exists a single computable function $\Phi$ such
  that for any $m$ and $X\in\prod_nB^m_n$ we have $\Phi(X)\in A_m$.

  We first describe the co-enumeration of $B^m_n$. Let
  $(A_{m,\alpha})_{\alpha<\omCK}$ be an approximation of $A_m$. First,
  wait for the first stage where $A_m$ is finite. If it happens, wait
  for exactly $n$ additional elements to be removed from $A_m$. If
  this happens, remove from $B^m_n$ all elements but $c\in\N$, the
  integer coding for the finite set $A_m$ at this stage, say at stage
  $\alpha_n$.  More formally, let $D_e$ be the finite set coded by
  $e$, and set $B^m_n=\{c\}$ with $D_c=A_m[\alpha_n]$.

  Now, we describe the function $\Phi$. Given $X$, find the first $i$
  such that we do not have the following: $X(i+1)$ viewed as coding a
  nonempty finite set consists of elements from $X(i)$ with exactly
  one element removed.  Note that $X(0)$ codes a finite set, so the
  length of chains $D_{X(0)}\supsetneq D_{X(1)}\supsetneq\dots$ has to
  be finite.  Therefore, there exists such an $i$.  Then, output any
  element from $D_{X(i)}$.  Whenever we reach stage $\alpha_n$, we
  have $D_{X(n)}=A_m[\alpha_n]$, and thus $i\geq n$.  This implies
  that the chosen element $\Phi(X)$ is contained in $A_m$, as
  required.
\end{proof}

We have seen in Proposition \ref{prop:compact-choice-total-DNC} that
$\ac{fin}$ is Weihrauch equivalent to $\Pi^1_1\mbox{-}{\sf Tot}_2$ and
$\Pi^1_1\mbox{-}{\sf DNC}_2$.  Moreover, we have also shown in
Proposition \ref{prop:aouc-total-n} that $\ac{aou}$ is Weihrauch
equivalent to $\Pi^1_1\mbox{-}{\sf Tot}_\N$.  Recall from Section
\ref{sec:restricted-choice-principles} we have introduced the
$\Pi^1_1$-diagonalization principle $\Pi^1_1\mbox{-}{\sf DNC}_\N$,
which is a special case of the cofinite (indeed, co-singleton)
$\Sigma^1_1$-choice principle.  In particular, at first we know a
bound of the number of elements removed from a cofinite set.  We now
consider the following principle for a bound $\ell\in\N$:
\[
\Sigma^1_1\mbox{-}{\sf C}^{{\sf cof}\upto \ell}_X=\Sigma^1_1\mbox{-}{\sf C}_{X}\rest{\{A\subseteq X:|X\setminus A|\leq \ell\}}.
\]

We call the coproduct of $(\Sigma^1_1\mbox{-}{\sf C}^{{\sf cof}\upto \ell}_{X})_{\ell\in\N}$ the {\em strongly-cofinite choice} on $\N$, and write $\Sigma^1_1\mbox{-}{\sf C}^{{\sf cof}\upto \ast}_{X}$.
Later we will show that the cofinite choice $\ac{cof}$ is not Medvedev or Weihrauch reducible to the all-or-finite choice $\ac{aof}$; however we will see that the strong cofinite choice is Medvedev/Weihrauch reducible to $\ac{aof}$.

Even more generally, we consider the {\em finite-or-strongly-cofinite choice}, denoted $\ac{fosc}$, which accepts an input of the form $(p,\psi)$, where for any $n\in\N$, $p(n)$ is a code of a $\Sigma^1_1$ subset $S_{p(n)}$ of $\N$ such that either $S_{p(n)}$ is nonempty and finite, or $|\N\setminus S_{p(n)}|\leq \psi(n)$.
If $(p,\psi)$ is an acceptable input, then $\ac{fosc}$ chooses one element from $\prod_nS_{p(n)}$.

We show that the all-or-unique choice is already strong enough to compute the finite-or-strongly-cofinite choice:

\begin{Proposition}
 For any $A\in\ac{fosc}$, there exists $B\in\ac{aof}$ such that $A\leq_MB$.
\end{Proposition}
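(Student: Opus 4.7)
The plan is to construct, for each coordinate $n$ with $\ell=\psi(n)$, two kinds of $\ac{aof}$-components whose joint solution pinpoints an element of $S_{p(n)}$. The first is the indicator set $T_n=\{k\in\N:|S_{p(n)}|=\infty\text{ or }k\in S_{p(n)}\}$; since $|S_{p(n)}|=\infty$ can be written $\forall N\,\exists k\geq N\,(k\in S_{p(n)})$ and is therefore $\Sigma^1_1$, the set $T_n$ is $\Sigma^1_1$, equal to $\N$ when $S_{p(n)}$ is cofinite and equal to $S_{p(n)}$ when it is finite. The second kind is a sequence of removal-trackers $B^n_k$ for $k\leq\ell+1$: using the $\Delta^1_1$-approximation $(S_{p(n),\alpha})_{\alpha<\omCK}$ and $R_n(\alpha)=\N\setminus S_{p(n),\alpha}$, I would set $B^n_{k,\alpha}=\N$ until the first stage $\alpha_k$ at which $|R_n(\alpha_k)|\geq k$, and $B^n_{k,\alpha}=\{c_k\}$ thereafter, with $c_k$ the canonical code of the $k$-element set $R_n(\alpha_k)$. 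This is a valid monotone $\Sigma^1_1$ description in which $B^n_k$ is either $\N$ or a singleton, so $B=\prod_n(T_n\times\prod_{k\leq\ell+1}B^n_k)$ is a legitimate $\ac{aof}$-instance.

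The extraction $\Phi$ operates as follows. Given $t_n\in T_n$ and $X(0),\dots,X(\ell+1)$ with $X(k)\in B^n_k$, first locate the smallest $k^*\leq\ell+1$ at which the chain breaks, i.e. $D_{X(k^*+1)}$ is not of the form $D_{X(k^*)}\cup\{x\}$ for some $x\notin D_{X(k^*)}$. If such a $k^*$ exists, output $\min(\N\setminus D_{X(k^*)})$; otherwise test whether $t_n\in D_{X(\ell+1)}$, outputting $t_n$ if not and $\min(\N\setminus D_{X(\ell+1)})$ if so. The correctness splits into two cases. When $S_{p(n)}$ is cofinite with $m=|\N\setminus S_{p(n)}|\leq\ell$, the $X(k)$ are forced singletons for $k\leq m$ and $D_{X(m)}=\N\setminus S_{p(n)}$; any adversarial extension at $k>m$ must append elements of $S_{p(n)}$, so every proper $D_{X(k)}$ with $k\geq m$ contains $\N\setminus S_{p(n)}$ and the minimum outside it lies in $S_{p(n)}$, while $t_n\notin D_{X(\ell+1)}$ certifies $t_n\in S_{p(n)}$. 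When $S_{p(n)}$ is finite, the approximation removes infinitely many elements, so the chain is fully forced with $D_{X(\ell+1)}\subseteq\N\setminus S_{p(n)}$; then $t_n\in S_{p(n)}$ is automatically disjoint from $D_{X(\ell+1)}$ and we correctly output $t_n$.

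The main obstacle I expect is the ``no-break'' branch: both the finite case and the cofinite case with an extending adversary produce proper chains through $\ell+1$, and these two situations are indistinguishable from the chain data alone. The clean resolution is the dichotomy that in the finite case $t_n\in S_{p(n)}\subseteq\N\setminus D_{X(\ell+1)}$, so the test $t_n\in D_{X(\ell+1)}$ can only trigger in the cofinite case, where the chain already contains $\N\setminus S_{p(n)}$ and hence the fallback value $\min(\N\setminus D_{X(\ell+1)})$ is guaranteed to lie in $S_{p(n)}$.
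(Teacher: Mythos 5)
Your proposal is correct and follows essentially the same strategy as the paper's proof: for each coordinate you use $\psi(n)$-many all-or-singleton components to track the elements removed in the cofinite case, plus one extra all-or-finite component (your $T_n$, the paper's $B^n_{\psi(n)}$) that equals $S_{p(n)}$ in the finite case, and the extractor outputs the latter's value unless it is certified to be a removed element, in which case it falls back to something avoiding the tracked removals. The only substantive differences are bookkeeping — you record cumulative codes of removed-element sets and detect chain breaks, where the paper records the removed elements individually and just tests membership — together with the caveat that ``the $k$-element set $R_n(\alpha_k)$'' is only well defined (and your chain-integrity test only sound) under the standard convention that removals are serialized one per stage, a convention the paper's own proof also tacitly assumes.
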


\begin{proof}
Let $A=\prod_nA_n$ with a bound $\psi$ is given.
We will construct a uniformly $\Sigma^1_1$ sequence $(B^n_m)_{m\leq\psi(n)}$ of subsets of $\N$.
We use $B^{n}_{0},B^{n}_{1},\dots,B^{n}_{\psi(n)-1}$ to code information which element is removed from $A_n$ whenever $A_n$ is cofinite, and use $B^{n}_{\psi(n)}$ to code full information of $A_n$ whenever $A_n$ is finite.
If $a_0$ is the first element removed from $A_n$, then put $B^n_0=\{a_0\}$, and if $a_1$ is the second element removed from $A_n$, then put $B^n_1=\{a_1\}$, and so on.
If $A_n$ becomes a finite set, then $B_{\psi(n)}$ just copies $A_n$.
One can easily ensure that for any $n\in\N$ and $m<\psi(n)$, if $A_n$ is finite, then $B^n_m$ is a singleton, which is not contained in $A_n$; otherwise $B^n_m=\N$.
Moreover, we can also see that either $B^n_{\psi(n)}$ is nonempty and finite or $B^n_{\psi(n)}=\N$.

Now, assume that $X\in\prod_{n,m}B^n_m$ is given.
If $X(n,\psi(n))\not\in\{X(n,i):i<\psi(n)\}$, then put $Y(n)=X(n,\psi(n))$.
Otherwise, choose $Y(n)\not\in\{X(n,i):i<\psi(n)\}$.
Clearly, the construction of $Y$ from $X$ is uniformly computable.

If $A_n$ becomes a finite set, the first case happens, and $Y(n)=X(n,\psi(n))\in B^n_{\psi(n)}=A_n$.
If $A_n$ remains cofinite, it is easy to see that $\N\setminus A_n\subseteq\{X(n,i):i<\psi(n)\}$, and therefore $Y(n)\in A_n$.
Consequently, $Y\in A$.
\end{proof}

\begin{Corollary}
  $\ac{aou}\equiv_{\sf W}\ac{aof}\equiv_{\sf W}\ac{fosc}$.
\end{Corollary}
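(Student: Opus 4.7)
The plan is simply to package the preceding results. We need four Weihrauch reductions: $\ac{aou}\leq_{\sf W}\ac{aof}$, $\ac{aof}\leq_{\sf W}\ac{aou}$, $\ac{aof}\leq_{\sf W}\ac{fosc}$, and $\ac{fosc}\leq_{\sf W}\ac{aof}$.

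First I would dispatch the two trivial inclusions of admissible-instance classes. For $\ac{aou}\leq_{\sf W}\ac{aof}$, every singleton is finite, so an instance of $\ac{aou}$ (each coordinate equal to $\N$ or a singleton) is already a legal instance of $\ac{aof}$, and a solution serves as a solution; the identity maps on both sides witness the reduction. For $\ac{aof}\leq_{\sf W}\ac{fosc}$, I would forward an $\ac{aof}$-instance to $\ac{fosc}$ together with the bound $\psi\equiv 0$: the alternative $|\N\setminus S_{p(n)}|\leq 0$ collapses to $S_{p(n)}=\N$, so the two admissible shapes for $\ac{fosc}$ reduce exactly to those of $\ac{aof}$, and any $\ac{fosc}$-solution is already an $\ac{aof}$-solution.

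The two non-trivial directions are already in hand. The reduction $\ac{aof}\leq_{\sf W}\ac{aou}$ is precisely the content of the first theorem of this subsection, whose construction is uniform in the index of the input and therefore transfers coordinatewise to the parallelized principles. The reduction $\ac{fosc}\leq_{\sf W}\ac{aof}$ is exactly the Proposition immediately preceding this Corollary. Chaining the four reductions yields $\ac{aou}\equiv_{\sf W}\ac{aof}\equiv_{\sf W}\ac{fosc}$.

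There is essentially no substantive obstacle; the only point worth confirming is that each of the four component constructions is uniform in the code of the instance, so that it lifts to the parallelized (and hence Weihrauch) setting. Since every previous argument supplies a single computable forward map together with a uniform backward map on solutions, parallelization is automatic, and the chain of reductions closes without further work.
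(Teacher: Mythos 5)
Your proposal is correct and matches the paper's (implicit) argument exactly: the corollary is meant to be the packaging of the two trivial instance-class inclusions with the two preceding uniform constructions ($\ac{aof}\leq_{\sf W}\ac{aou}$ from the first theorem of the subsection and $\ac{fosc}\leq_{\sf W}\ac{aof}$ from the proposition immediately before). Your remark that each construction is uniform in the code of the instance, so that the Medvedev-style statements lift to Weihrauch reductions, is precisely the point the paper relies on.
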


In the following we will only consider all-or-finite choice, by convenience. We now prove that dependent choice does not add any power, and the existence of a maximal instance that already code all the other instances, with very similar proofs as in the $\dc{fin}$ case.

\begin{Theorem}\label{th:equ-ac-dc-aof}
For every $A\in\dc{aof}$ there exists $B\in\ac{aof}$ such that $A\leq_MB$.
\end{Theorem}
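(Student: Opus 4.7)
The plan is to emulate the approximation-and-freeze construction of Theorem~\ref{th:fin-maximum}, adapted to the all-or-finite setting. Write $A=[T]$ for a $\Sigma^1_1$ tree $T\subseteq\baire$ with all-or-finite branching and, for each $\sigma\in\baire$, put $S_\sigma=\{n:\sigma\fr n\in T\}$; this is $\Sigma^1_1$, and under the $\dc{aof}$-assumption it is nonempty all-or-finite when $\sigma\in T$ and empty otherwise. The goal is to produce, uniformly in $\sigma$, a $\Sigma^1_1$ nonempty all-or-finite set $C_\sigma$ satisfying $C_\sigma=S_\sigma$ whenever $\sigma\in T$. Once such $C_\sigma$ are in hand, $B:=\prod_\sigma C_\sigma$ lies in $\ac{aof}$, and the Medvedev reduction $\phi\colon B\to A$ simply reads off a path recursively: set $\sigma_0:=\langle\rangle$ (which lies in $T$ since $[T]\neq\emptyset$, and we may further assume $T$ is pruned) and, given $\sigma_k\in T$, let $g(k):=f(\sigma_k)$ and $\sigma_{k+1}:=\sigma_k\fr g(k)$; since $f(\sigma_k)\in C_{\sigma_k}=S_{\sigma_k}$, the invariant $\sigma_k\in T$ propagates and $g\in[T]$.

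To build $C_\sigma$, I design a decreasing $\Delta^1_1$ approximation $(S_\sigma[\alpha])_{\alpha<\omCK}$ of $S_\sigma$ that remains all-or-finite at every stage. I keep $S_\sigma[\alpha]=\N$ until the $\Pi^1_1$-statement ``$S_\sigma$ is bounded''---equivalent under all-or-finiteness to ``$S_\sigma\neq\N$''---is witnessed at some $\alpha^\ast_\sigma<\omCK$ together with an explicit bound $M_\sigma$; from $\alpha^\ast_\sigma$ onward I let $S_\sigma[\alpha]\subseteq[0,M_\sigma]$ and successively remove each $n\in[0,M_\sigma]$ whose $\Pi^1_1$-witness $\sigma\fr n\notin T$ arrives. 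For $\sigma\in T$ the approximation never vanishes and its limit is $S_\sigma$; for $\sigma\notin T$ the approximation reaches $\emptyset$ at some successor stage $\beta_\sigma+1<\omCK$, exactly as in Theorem~\ref{th:fin-maximum}, and I freeze at the previous nonempty stage. Uniformly this yields
\[
  n\in C_\sigma\iff n\in S_\sigma\;\lor\;\exists\alpha<\omCK\;\bigl(n\in S_\sigma[\alpha]\wedge S_\sigma[\alpha+1]=\emptyset\bigr),
\]
a $\Sigma^1_1$ condition, since by $\Sigma^1_1$-boundedness the existential over $\omCK$-stages of a $\Delta^1_1$ predicate remains $\Sigma^1_1$. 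A case analysis gives $C_\sigma=S_\sigma$ when $\sigma\in T$ and $C_\sigma=S_\sigma[\beta_\sigma]$ (a nonempty finite $\Delta^1_1$ set) when $\sigma\notin T$, so each $C_\sigma$ is nonempty and all-or-finite.

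The main obstacle is arranging the approximation so that it remains all-or-finite at every stage rather than slipping into cofinite shapes such as $\N\setminus F$: this requires the simultaneous use of the $\Pi^1_1$-witness structure of both ``$S_\sigma\neq\N$'' (to trigger the switch to a bounded approximation) and ``$S_\sigma=\emptyset$'' (to trigger the freeze), together with $\Sigma^1_1$-boundedness to keep the switching and freezing stages below $\omCK$. Once the approximation is set up, the remaining checks---uniform $\Sigma^1_1$-complexity of $(C_\sigma)_\sigma$, nonemptiness and all-or-finiteness of each $C_\sigma$, and computability of the path-building reduction $\phi$---are routine.
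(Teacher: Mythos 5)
Your proposal is correct and follows essentially the same route as the paper's proof: approximate each branching set $\{n:\sigma\fr n\in T\}$, keep the approximation equal to $\N$ until it is witnessed to be finite, and use the fact that emptiness first occurs at a successor stage to freeze at the last nonempty (finite) value, then take the product over all $\sigma\in\om^{<\om}$ and read off a path recursively. The extra detail you give on the $\Sigma^1_1$-definability of the frozen sets and on the path-building reduction just makes explicit what the paper leaves implicit.
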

\begin{proof}
  The argument is similar as for the finite case (Fact \ref{KMP} or
  Theorem \ref{th:fin-maximum}). If $T$ is a $\Sigma^1_1$ tree, define
  $T_\sigma$ by the following $\Sigma^1_1$ procedure: First, wait for
  $\{n:\sigma\fr n\in T\}$ to be finite but nonempty. If this happens,
  at every stage define $T_\sigma$ to be $\{n:\sigma\fr n\in T\}$
  except if this one becomes empty.  Note that if
  $\{n:\sigma\fr n\in T\}$ becomes a finite set at some stage
  $\alpha_0$, but an empty set at a later stage $\alpha_1$, then the
  least such stage $\alpha_1$ must be a successor ordinal, and therefore
  we can keep $T_\sigma$ being nonempty (see also the proof of Theorem
  \ref{th:fin-maximum}).  Clearly, $T_\sigma$ is either finite or $\N$
  and $\prod_{\sigma\in\om^{<\om}} [T_\sigma]\geq_M[T]$.
\end{proof}
\begin{Corollary}
  $\ac{aof}\equiv_W\dc{aof}$.
\end{Corollary}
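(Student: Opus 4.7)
The plan is to derive the Weihrauch equivalence directly from the preceding Medvedev result (Theorem~\ref{th:equ-ac-dc-aof}) after observing that the construction there is uniform, together with an essentially trivial embedding of $\ac{aof}$ into $\dc{aof}$.

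For the easy direction $\ac{aof}\leq_W\dc{aof}$, I would take an instance $A=\prod_n A_n$ of $\ac{aof}$, given by a sequence $(p_n)$ of $\Sigma^1_1$-codes with $A_n=S_{p_n}$ each either all of $\N$ or a nonempty finite set. I then uniformly produce a code for the $\Sigma^1_1$ tree
\[T=\{\sigma\in\om^{<\om}:(\forall i<|\sigma|)\ \sigma(i)\in A_i\}.\]
Since $\{n:\sigma\fr n\in T\}=A_{|\sigma|}$ is independent of $\sigma$ and is either all of $\N$ or finite, $T$ is a legitimate instance of $\dc{aof}$ with $[T]=A$. The backward map is the identity.

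For the hard direction $\dc{aof}\leq_W\ac{aof}$, I re-examine the proof of Theorem~\ref{th:equ-ac-dc-aof}. Given a $\Sigma^1_1$-index for an instance tree $T$, the construction of $T_\sigma$ from the pair $(T,\sigma)$ is by a fixed $\Sigma^1_1$ recipe: wait until $\{n:\sigma\fr n\in T\}$ first becomes a nonempty finite set, then track it until either it stabilizes or it is about to become empty (at which point we freeze on its last nonempty value). Thus $\sigma\mapsto(\text{$\Sigma^1_1$-index of }T_\sigma)$ is computable, so from the input $T$ we produce, computably and uniformly, the sequence $(T_\sigma)_{\sigma\in\om^{<\om}}$, which is an instance of $\ac{aof}$. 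For the backward reduction, given a choice $X\in\prod_\sigma [T_\sigma]$, I recover a path through $T$ by the usual recursive selection: set $\tau_0=\langle\rangle$ and $\tau_{k+1}=\tau_k\fr X(\tau_k)(0)$; since $X(\tau_k)\in [T_{\tau_k}]$ and $T_{\tau_k}\subseteq\{n:\tau_k\fr n\in T\}$ whenever the latter is finite, and since $T$ is a $\dc{aof}$-tree so $\{n:\tau_k\fr n\in T\}$ is either $\N$ or a nonempty subset equal to $T_{\tau_k}$, each extension stays inside $T$. This selection is clearly computable in $X$, completing the Weihrauch reduction.

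There is essentially no obstacle beyond checking the two uniformity points above; the whole content sits inside Theorem~\ref{th:equ-ac-dc-aof}. The only subtlety worth flagging is that in the backward reduction one must use the construction of $T_\sigma$ (not merely its existence) to know that $T_\sigma\subseteq\{n:\sigma\fr n\in T\}$ holds at every node along the path, so that the recursive construction of $\tau_{k+1}$ never leaves $T$.
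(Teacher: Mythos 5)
Your proposal is correct and is essentially the paper's own argument: the paper proves this corollary simply by noting the uniformity of the construction in Theorem~\ref{th:equ-ac-dc-aof}, and your write-up just makes explicit the two routine pieces (the trivial embedding of a homogeneous instance as a $\dc{aof}$-tree, and the computable path-recovery from a choice in $\prod_\sigma T_\sigma$, which works because the instance tree is pruned so each branching set is nonempty and hence equals $T_\sigma$). The only cosmetic slip is writing $X(\tau_k)(0)$ where $X(\tau_k)$ is already a natural number.
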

\begin{proof}
  By uniformity of the precedent proof.
\end{proof}

The upward density of the axiom of choice on ``all-or-finite'' sets would allow us to Weihrauch separate it from its ``finite'' version. However, $\ac{aof}$ does also have a maximum element.

\begin{Theorem}\label{th:max-aof}
  There exists a single maximum Medvedev degree in $\ac{aof}$ and $\dc{aof}$.
\end{Theorem}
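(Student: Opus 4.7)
The plan is to mirror the construction of Theorem~\ref{th:fin-maximum} while adapting the approximation machinery to the all-or-finite setting. First note that, by Theorem~\ref{th:equ-ac-dc-aof}, every computable instance of $\dc{aof}$ is Medvedev-below some computable instance of $\ac{aof}$, and every instance of $\ac{aof}$ is itself an instance of $\dc{aof}$; therefore it suffices to produce a single maximum element in $\ac{aof}$, and it will automatically be a maximum in $\dc{aof}$ as well.

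The idea is to uniformly enumerate all $\Sigma^1_1$ subsets $(S_e)_{e\in\N}$ of $\N$ with $\Delta^1_1$-approximations $(S_{e,\alpha})_{\alpha<\omCK}$, and to construct a uniformly $\Sigma^1_1$ sequence $(T_e)_{e\in\N}$ such that (i) each $T_e$ is either equal to $\N$ or nonempty and finite, and (ii) if $S_e$ happens to be a valid nonempty all-or-finite instance, then $T_e=S_e$. The desired maximum is then $\prod_e T_e$: given any computable $A\in\ac{aof}$, each factor of $A$ appears (with its code) as some $S_e$, so the components of $\prod_e T_e$ Medvedev-compute $A$.

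The construction of $T_e$ follows the pattern of Theorem~\ref{th:fin-maximum}. Set $T_{e,0}=\N$. So long as no initial segment of the approximation has been witnessed to be finite, keep $T_{e,\alpha}=\N$. Whether $S_{e,\alpha}$ is finite is a $\Delta^1_1$ condition at each stage, so we may detect the least $\alpha_0$ at which $S_{e,\alpha_0}$ is finite. From that stage onward, set $T_{e,\alpha}=S_{e,\alpha}$ as long as $S_{e,\alpha}\neq\emptyset$; by monotonicity of the approximation this is a legitimate decreasing $\Delta^1_1$ sequence, even across a limit where we ``drop'' from $\N$ to a finite set. If $S_{e,\alpha}$ later becomes empty, then, since it had already become finite, the first empty stage must be a successor $\beta+1$, and we freeze $T_{e,\gamma}=T_{e,\beta}$ for every $\gamma\geq\beta+1$, exactly as in Theorem~\ref{th:fin-maximum}.

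Verification splits into cases: if $S_e=\N$ the approximation never becomes finite and $T_e=\N$; if $S_e$ is nonempty finite, the approximation eventually stabilizes and yields $T_e=S_e$; if $S_e$ is empty or an infinite proper subset of $\N$, the construction still outputs either $\N$ or a frozen nonempty finite $S_{e,\beta}$, both of which are valid all-or-finite instances. The main subtlety I expect is confirming that the frozen approximation is genuinely $\Delta^1_1$-uniform in $e$ and $\alpha$, so that each $T_e$ is truly $\Sigma^1_1$; this works because emptiness of a $\Sigma^1_1$ set is $\Pi^1_1$ and thus witnessed below $\omCK$, and because finiteness of a $\Delta^1_1$ stage is decidable within the hyperarithmetic hierarchy, so the entire recipe stays bounded below $\omCK$ just as in the compact case.
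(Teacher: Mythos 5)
Your proof is correct and follows essentially the same route as the paper: reduce to $\ac{aof}$ via Theorem~\ref{th:equ-ac-dc-aof}, enumerate all $\Sigma^1_1$ components, and repair each into a nonempty all-or-finite set by copying the approximation once it becomes finite and freezing it just before it empties out. One small repair: the copying phase should be triggered when the approximation first becomes \emph{finite and nonempty} (as in the paper's proof), not merely finite, because a decreasing $\Delta^1_1$ approximation can pass from infinite directly to empty at a limit stage, in which case your claimed successor stage $\beta+1$ (and hence the freeze point $\beta$) does not exist; with the stronger trigger the decreasing-chain-of-nonempty-finite-sets argument guarantees the first empty stage is a successor, and in the degenerate case the component is simply left equal to $\N$, which is still a valid all-or-finite set.
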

\begin{proof}
  The argument is similar as Theorem \ref{th:fin-maximum}, even though
  we have no compactness assumption.  By the fact that
  $\ac{aof}\equiv_W\dc{aof}$, it suffices to prove the result for one,
  let's say $\ac{aof}$. Let $A_e=\prod_n S^e_n$ be the $e$-th
  $\Sigma^1_1$ homogeneous set.  We set
  $\widehat {A_e}=\prod_n\widehat {S^e_n}$ to be defined by the
  following $\Sigma^1_1$ procedure: First, wait for some $S^e_n$ to
  become finite and nonempty. If this happens, define
  $\widehat{S^e_n}=S^e_n$ until it removes its last element. At this
  point, leaves $\widehat {S^e_n}$ nonempty, which is possible since
  it can happen only at a successor stage (see also the proof of
  Theorem \ref{th:equ-ac-dc-aof}).

  Then $(\widehat {A_e})_{e\in\N}$ is an enumeration of all nonempty
  elements of $\ac{aof}$. Define the maximum to simply be
  $\prod_e\prod_n\widehat{S^e_n}$.
\end{proof}

We now prove that the relaxed constraint on the sets that allows them
to be full does increase the power of the choice principle, making
$\ac{aof}$ strictly above $\ac{fin}$. We use the fact that the lattice
of $\ac{fin}$ has a maximal element, and we show that it must be
strictly below some instance of $\ac{aof}$.

\begin{Theorem}\label{th:aof-sup-fin}
  For every $A\in\ac{fin}$, there exists $B\in\ac{aof}$ such that $A<_MB$.
\end{Theorem}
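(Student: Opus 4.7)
The plan is to exhibit a single $B$ that works for every $A \in \ac{fin}$, namely $B = M_{\sf aof}$, the Medvedev maximum in $\ac{aof}$ provided by Theorem~\ref{th:max-aof}. Denote by $M_{\sf fin}$ the Medvedev maximum in $\ac{fin}$ from Theorem~\ref{th:fin-maximum}. Since every finite set is of the form ``all or finite'', $M_{\sf fin}$ is itself a computable instance of $\ac{aof}$, so for every $A \in \ac{fin}$ we have $A \leq_M M_{\sf fin} \leq_M M_{\sf aof}$. All the work is therefore to establish the strict inequality $M_{\sf aof} \not\leq_M M_{\sf fin}$.

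To witness strictness I would construct an $\ac{aou}$ instance $T$ (hence also $\ac{aof}$) every solution of which fails to be majorized by any $\Delta^1_1$ function. Let $(g_e)_{e\in\N}$ be a standard indexing of $\Delta^1_1$-partial functions, and define the $\Pi^1_1$-partial function $\psi$ by $\psi(e) = g_e(e) + 1$ whenever $g_e$ is total and $\psi(e)\uparrow$ otherwise; its graph is $\Pi^1_1$ because totality of a $\Delta^1_1$-index is $\Pi^1_1$ and evaluation of a $\Delta^1_1$ code is $\Delta^1_1$ uniformly in the code. Applying the construction of Proposition~\ref{prop:aouc-total-n} to $\psi$ yields a uniformly $\Sigma^1_1$ sequence $S_n = \{a : \psi(n)\downarrow\;\Rightarrow\;a = \psi(n)\}$, each $S_n$ being either $\N$ or a singleton, so $T = \prod_n S_n \in \ac{aou}$. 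For any $x \in T$ and any total $\Delta^1_1$ function $h = g_{e_0}$ we get $x(e_0) = \psi(e_0) = h(e_0) + 1 > h(e_0)$, so no $\Delta^1_1$ function majorizes $x$.

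Now suppose toward a contradiction that some computable $\Phi$ realizes $M_{\sf aof} \leq_M M_{\sf fin}$. Since $T$ is a computable $\ac{aof}$ instance it appears as some coordinate $e_0$ of $M_{\sf aof} = \prod_e \prod_n \widehat{S^e_n}$; moreover the stabilization procedure defining $\widehat{S^{e_0}_n}$ leaves $S_n$ unchanged, since each $S_n$ either remains $\N$ or becomes a singleton at a successor stage and stays nonempty thereafter. Composing $\Phi$ with the projection onto coordinate $e_0$ yields a computable reduction $M_{\sf fin} \to T$. Since $M_{\sf fin}$ is $\Sigma^1_1$-compact (a product of $\Sigma^1_1$-compact sets), Lemma~\ref{lemma:higher-hyperimmunefree} supplies $p^* \in M_{\sf fin}$ such that every $f \leq_{hT} p^*$ is majorized by a $\Delta^1_1$ function. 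Every computable operator is in particular $\Pi^1_1$-continuous, so the image of $p^*$ in $T$ is higher Turing reducible to $p^*$ and hence majorized by a $\Delta^1_1$ function, contradicting the defining property of $T$.

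The main delicate points are to verify carefully that $\psi$ really has $\Pi^1_1$ graph and that the stabilized $\widehat{S^{e_0}_n}$ truly coincides with $S_n$, so that projection onto the $e_0$-coordinate of $M_{\sf aof}$ genuinely lands in $T$ and Lemma~\ref{lemma:higher-hyperimmunefree} can be invoked in its stated form.
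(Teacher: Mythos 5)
Your proof is correct, but it takes a genuinely different route from the paper's. The paper argues instance-by-instance and elementarily: given $A\in\ac{fin}$, it co-enumerates $C=\prod_nC_n$ where $C_n$ stays equal to $\N$ until $\Phi_n(\cdot;n)$ is seen to be total on $A$ and to take finitely many values there (guaranteed by compactness of $A$ and $\Sigma^1_1$-boundedness), at which point $C_n$ collapses to the singleton $\{\max\Phi_n(A;n)+1\}$; then $B=A\times C$ works, since no $\Phi_n$ can witness $C\leq_M A$. You instead argue globally through the maxima of the two lattices (Theorems~\ref{th:fin-maximum} and~\ref{th:max-aof}) and separate them by a domination property: an $\ac{aou}$ instance $T$ none of whose members is majorized by a $\Delta^1_1$ function, played against the higher hyperimmune-free basis theorem (Lemma~\ref{lemma:higher-hyperimmunefree}) applied to the compact set $M_{\sf fin}$. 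This is essentially the mechanism the paper itself deploys only later, in Proposition~\ref{prop:aof-domination} and Corollary~\ref{cor:aof-versus-sigma-compact}, to separate $\ac{aof}$ from the $\sigma$-compact choice; so your argument in fact yields the stronger conclusion that a single $B=M_{\sf aof}$ (indeed a single $\ac{aou}$ instance) lies strictly above every compact, even every $\sigma$-compact, instance, at the price of invoking the basis-theorem machinery and the two maximum-element theorems rather than a self-contained diagonalization. Two points you rightly flag as delicate deserve care: (i) for the graph of $\psi$ to be $\Pi^1_1$ you need a coding of total $\Delta^1_1$ functions (e.g.\ via Kleene's $\mathcal{O}$) for which validity, totality and evaluation are uniformly $\Pi^1_1$ --- it is cleaner to diagonalize against all partial $\Pi^1_1$ functions $\varphi_e$ via $\psi(e)=\varphi_e(e)+1$, exactly as in Proposition~\ref{prop:aof-domination}, since every total $\Delta^1_1$ function occurs among them; and (ii) you should note that $M_{\sf fin}$, being an element of $\ac{fin}$ and hence a product of finite sets, really is a compact $\Sigma^1_1$ set, so that Lemma~\ref{lemma:higher-hyperimmunefree} applies and the computable reduction obtained by projecting onto the $T$-coordinate is in particular a higher Turing reduction.
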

\begin{proof}
  We will find $C=\prod_nC_n\in\ac{aof}$ such that
  $C\not\leq_MA$. Then, $A\times C$ will witness the theorem.

  Now, let us describe the co-enumeration of $C_n$. First, wait for
  $\Phi_n(\cdot;n)$ to be total on $A$, where $\Phi_n$ is the $n$-th
  partial computable function.
  Then, wait for it to take only finitely many values, which will
  happen by compactness. At this point, remove everything from $C_n$
  except $\max\Phi_n(A;n)+1$.

  We have that $C_n$ is either $\N$ if the co-enumeration is stuck
  waiting for $\Phi_n(\cdot;n)$ to be total, or a singleton
  otherwise. Also, it is clear that for any $n$, $\Phi_n$ cannot be a
  witness that $C\leq_MA$, so $C\not\leq_MA$.
\end{proof}

\begin{Corollary}
  We have $\ac{fin}<_W\ac{aof}$.
\end{Corollary}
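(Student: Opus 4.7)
The plan is to invoke the upward-density template sketched at the beginning of Section~3, with $P=\ac{fin}$ and $Q=\ac{aof}$. The easy direction $\ac{fin}\leq_{\sf W}\ac{aof}$ is obtained by the identity reduction: every nonempty finite subset of $\N$ already satisfies the ``all or finite'' restriction, so any computable instance of $\ac{fin}$ is literally a computable instance of $\ac{aof}$, and no rewriting is needed.

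For the strict direction I would argue by contradiction. Suppose $\ac{aof}\leq_{\sf W}\ac{fin}$, witnessed by computable functions $h,k$. By Theorem~\ref{th:fin-maximum}, choose a computable instance $M_{\sf fin}$ of $\ac{fin}$ that realizes the maximum Medvedev degree inside $\ac{fin}$. Feeding $M_{\sf fin}$ into Theorem~\ref{th:aof-sup-fin} yields a computable instance $B\in\ac{aof}$ with $M_{\sf fin}<_M B$. Let $p$ be a computable code for $B$. The Weihrauch reduction, specialized at $p$, gives the computable Medvedev reduction $y\mapsto k(p,y)$ from $\ac{fin}(h(p))$ to $\ac{aof}(p)=B$, so $B\leq_M \ac{fin}(h(p))$. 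Since $h(p)$ is a computable instance of $\ac{fin}$, maximality of $M_{\sf fin}$ gives $\ac{fin}(h(p))\leq_M M_{\sf fin}$, hence $B\leq_M M_{\sf fin}$, contradicting $M_{\sf fin}<_M B$.

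I do not expect a genuine obstacle: the only subtlety is the passage from the Weihrauch reduction to a plain Medvedev reduction between the mass problems at a fixed computable instance, which is immediate once $p$ is hardcoded into $k(p,\cdot)$. Everything else is a direct appeal to previously established results, and the argument is exactly the general template from the preamble of Section~3, applied to the fact that $\ac{fin}$ has a top Medvedev degree while $\ac{aof}$ strictly exceeds it.
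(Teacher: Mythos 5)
Your proposal is correct and follows exactly the paper's route: the paper's proof of this corollary simply cites Theorem~\ref{th:aof-sup-fin} and Theorem~\ref{th:fin-maximum}, and the argument you spell out (apply Theorem~\ref{th:aof-sup-fin} to the maximum Medvedev degree of $\ac{fin}$, then derive a contradiction from a hypothetical Weihrauch reduction by hardcoding the computable instance) is precisely the separation template from the preamble of Section~3 that the paper intends. No gaps; your handling of the passage from the Weihrauch reduction to a Medvedev reduction at a fixed computable instance is the right and only subtlety.
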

\begin{proof}
  By Theorem~\ref{th:aof-sup-fin} and Theorem~\ref{th:fin-maximum}.
\end{proof}

One can also use the domination property to separate the all-or-finite choice principle and the ($\sigma$-)compact principle.

\begin{Proposition}\label{prop:aof-domination}
There exists $A\in\ac{aof}$ such that every element $p\in A$ computes a function which dominates all $\Delta^1_1$ functions.
\end{Proposition}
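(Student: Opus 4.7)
Plan: I will exhibit $A$ as an $\ac{aou}$ instance (which is a fortiori an $\ac{aof}$ instance) that simultaneously encodes the values of every $\Delta^1_1$ function, so that any $p\in A$ uniformly computes, by a simple maximum across ``columns'', a single function dominating all $\Delta^1_1$ functions. The key observation is that the $\Delta^1_1$ functions are exactly the total partial $\Pi^1_1$ functions, and the encoding from Proposition~\ref{prop:aouc-total-n} already shows how to turn any partial $\Pi^1_1$ function into a uniformly $\Sigma^1_1$ all-or-unique set.

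Concretely, I would fix a uniform list $(\varphi_e)_{e\in\N}$ of all partial $\Pi^1_1$ functions $\varphi_e\colon\N\to\N$ with uniformly $\Pi^1_1$ functional graph, and for each pair $\langle e,n\rangle$ I put
\[S_{\langle e,n\rangle}=\{a\in\N:(\forall k)(\varphi_e(n)=k\to a=k+1)\}.\]
Exactly as in Proposition~\ref{prop:aouc-total-n}, each $S_{\langle e,n\rangle}$ is uniformly $\Sigma^1_1$ in $\langle e,n\rangle$ (the implication inside is the disjunction of a $\Sigma^1_1$ formula and an arithmetical formula, and numerical quantifiers preserve $\Sigma^1_1$), and it equals $\{\varphi_e(n)+1\}$ when $\varphi_e(n)\downarrow$ and equals $\N$ otherwise. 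In every case it is a singleton or all of $\N$, so $A:=\prod_{\langle e,n\rangle}S_{\langle e,n\rangle}$ is a nonempty computable instance of $\ac{aou}$, hence of $\ac{aof}$.

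To produce the desired dominating function from any $p\in A$, I define the $p$-computable function $q(n)=\max\{p(\langle e,n\rangle):e\leq n\}$. Given an arbitrary $\Delta^1_1$ function $g$, choose $e_0$ with $g=\varphi_{e_0}$; since $g$ is total, $\varphi_{e_0}(n)\downarrow=g(n)$ for every $n$, forcing $S_{\langle e_0,n\rangle}=\{g(n)+1\}$ and hence $p(\langle e_0,n\rangle)=g(n)+1$. For all $n\geq e_0$ this gives $q(n)\geq p(\langle e_0,n\rangle)=g(n)+1>g(n)$, so $q$ dominates $g$.

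The only mild subtlety, which I do not expect to be a real obstacle, is ensuring that our enumeration $(\varphi_e)$ really ranges over single-valued partial $\Pi^1_1$ functions (otherwise a non-functional pseudo-graph could collapse some $S_{\langle e,n\rangle}$ to the empty set). This is handled in the standard way by using the universal partial $\Pi^1_1$ function obtained via $\Pi^1_1$-stage comparison from higher recursion theory, which automatically enumerates precisely the functional partial $\Pi^1_1$ functions and in particular every $\Delta^1_1$ function.
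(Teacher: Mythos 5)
Your proof is correct and follows essentially the same route as the paper: both encode the values of an effective enumeration of all partial $\Pi^1_1$ functions into uniformly $\Sigma^1_1$ all-or-unique sets and extract a dominant from any member of the product by combining finitely many coordinates. If anything, your indexing is the cleaner one: by recording $\varphi_e(n)$ at every argument $n$ and setting $q(n)=\max\{p(\langle e,n\rangle):e\leq n\}$, domination of a total $\varphi_{e_0}$ is immediate for all $n\geq e_0$, whereas the paper's version only records $\varphi_e(k)$ for $k\leq e$ and leaves the corresponding verification implicit.
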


\begin{proof}
Let $(\varphi_e)_{e\in\N}$ be an effective enumeration of all partial $\Pi^1_1$ functions on $\om$.
Put $s(e)=\sum_{n\leq e}n$.
Define $A_{s(e)+k}\subseteq\N$ for $k\leq e$ as follows.
Begin with $A_{s(e)+k}=\N$.
Wait until we see $\varphi_e(k)\downarrow$.
If it happens, set $A_{s(e)+k}=\{\varphi_e(k)\}$.
Define $A=\prod_nA_n$.
Then define $\Psi(p;n)=\sum_{k\leq e}p(k)$, which is clearly computable in $p$.
It is easy to see that $\Psi(p)$ dominates all $\Delta^1_1$ function whenever $p\in A$.
Indeed, since $\Psi$ is total, every $p\in A$ $tt$-computes a function which dominates all $\Delta^1_1$ functions.
\end{proof}

This shows that the all-or-finite $\Sigma^1_1$-choice is not
Weihrauch-reducible to the $\sigma$-compact $\Sigma^1_1$-choice.

\begin{Corollary}\label{cor:aof-versus-sigma-compact}
$\ac{aof}\not\leq_{\sf W}\Sigma^1_1\mbox{-}{\sf K}_\sigma{\sf C}_{\N^\N}$.
\end{Corollary}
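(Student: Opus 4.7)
The plan is to derive a contradiction by combining Proposition~\ref{prop:aof-domination} with Lemma~\ref{lemma:higher-hyperimmunefree}. Suppose toward a contradiction that $\ac{aof}\leq_{\sf W}\Sigma^1_1\mbox{-}{\sf K}_\sigma{\sf C}_{\N^\N}$, as witnessed by computable functions $h$ and $k$. Let $A\in\ac{aof}$ be the specific instance provided by Proposition~\ref{prop:aof-domination}, so that every $p\in A$ computes a function dominating all $\Delta^1_1$ functions. Writing $a$ for a computable code of $A$, the image $h(a)$ is then a computable code of a $\sigma$-compact $\Sigma^1_1$ instance, that is, a sequence $(K_n)_{n\in\N}$ of $\Sigma^1_1$ compact subsets of $\N^\N$ at least one of which is nonempty.

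The first step is to manufacture a ``tame'' solution to this $\sigma$-compact instance. Pick any $n$ with $K_n\neq\emptyset$ and apply Lemma~\ref{lemma:higher-hyperimmunefree} to obtain $p\in K_n$ such that every $f\leq_{hT}p$ is majorized by some $\Delta^1_1$ function; since ordinary Turing reduction is a special case of higher Turing reduction, in particular every $f\leq_T p$ is $\Delta^1_1$-dominated. Then $\langle n,p\rangle$ is a valid solution to the $\sigma$-compact choice problem coded by $h(a)$.

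The second step is to invoke the Weihrauch reduction: $k(a,\langle n,p\rangle)$ must lie in $A$. Since $a$ is computable and $n\in\N$, this element $q:=k(a,\langle n,p\rangle)$ satisfies $q\leq_T p$. Therefore every function computable from $q$ is likewise $\Delta^1_1$-dominated. On the other hand, by the defining property of $A$, the element $q\in A$ must compute a function that dominates every $\Delta^1_1$ function, which is the desired contradiction.

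The main routine step to check is simply that the $\sigma$-compact coding convention used earlier permits a solution of the form $\langle n,p\rangle$ with $p\in K_n$, and that such a pair yields a $\leq_T p$ object — both are immediate from the definition of $\Sigma^1_1\mbox{-}{\sf K}_\sigma{\sf C}_{\N^\N}$ as the compositional product $\Sigma^1_1\mbox{-}{\sf KC}_{\N^\N}\star\Sigma^1_1\mbox{-}{\sf C}_\N$. The only conceptual point requiring care is the passage from $\leq_{hT}$ to $\leq_T$ in Lemma~\ref{lemma:higher-hyperimmunefree}; but since $\leq_T\;\subseteq\;\leq_{hT}$, the desired $\Delta^1_1$-domination property trivially descends to ordinary Turing reducibility, so no genuine obstacle arises.
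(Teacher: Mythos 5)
Your proposal is correct and follows essentially the same route as the paper: it combines Proposition~\ref{prop:aof-domination} with Lemma~\ref{lemma:higher-hyperimmunefree}, using that $\leq_T$ is a special case of $\leq_{hT}$ so the tame solution $p$ forces the output $k(a,p)\leq_T p$ to have only $\Delta^1_1$-majorized computable functions, contradicting the domination property of $A$. The paper's proof is just a terser statement of exactly this argument.
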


\begin{proof}
Recall that a computable instance of $\Sigma^1_1\mbox{-}{\sf K}_\sigma{\sf C}_{\N^\N}$ is a countable union of compact $\Sigma^1_1$ sets.
Thus, by Lemma \ref{lemma:higher-hyperimmunefree}, there is a solution $p$ to a given computable instance of $\Sigma^1_1\mbox{-}{\sf K}_\sigma{\sf C}_{\N^\N}$ such that any function which is higher Turing reducible to $p$ is majorized by a $\Delta^1_1$ function.
However, by Proposition \ref{prop:aof-domination}, there is a computable instance of $\ac{aou}$ whose solution consists of $\Delta^1_1$ dominants. 
\end{proof}

\begin{Corollary}
The $\sigma$-compact choice $\Sigma^1_1\mbox{-}{\sf K}_\sigma{\sf C}_{\N^\N}$ is not parallelizable, and $\Sigma^1_1\mbox{-}{\sf K}{\sf C}_{\N^\N}<_{\sf W}\Sigma^1_1\mbox{-}{\sf K}_\sigma{\sf C}_{\N^\N}$.
\end{Corollary}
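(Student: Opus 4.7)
The plan is to derive both conclusions from Corollary~\ref{cor:aof-versus-sigma-compact}, which states $\ac{aof} \not\leq_{\sf W} \Sigma^1_1\mbox{-}{\sf K}_\sigma{\sf C}_{\N^\N}$. The first step I would take is to establish the simple reduction $\Sigma^1_1\mbox{-}{\sf C}^{\sf aof}_\N \leq_{\sf W} \Sigma^1_1\mbox{-}{\sf K}_\sigma{\sf C}_{\N^\N}$. Given a $\Sigma^1_1$ set $A \subseteq \N$ which is either $\N$ or finite, I would form the uniform sequence $S_n = A \cap \{n\}$; each $S_n$ is a $\Sigma^1_1$ subset of the singleton $\{n\}$ (viewed inside $\N^\N$), hence trivially compact, and $\bigcup_n S_n = A$ is nonempty. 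Any $\sigma$-compact choice then returns an element of $A$. Parallelizing this reduction gives $\ac{aof} \leq_{\sf W} \widehat{\Sigma^1_1\mbox{-}{\sf K}_\sigma{\sf C}_{\N^\N}}$.

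To prove non-parallelizability of $\Sigma^1_1\mbox{-}{\sf K}_\sigma{\sf C}_{\N^\N}$, I would argue by contradiction: if $\Sigma^1_1\mbox{-}{\sf K}_\sigma{\sf C}_{\N^\N}$ were parallelizable, then $\widehat{\Sigma^1_1\mbox{-}{\sf K}_\sigma{\sf C}_{\N^\N}} \equiv_{\sf W} \Sigma^1_1\mbox{-}{\sf K}_\sigma{\sf C}_{\N^\N}$, and combining with the previous paragraph yields $\ac{aof} \leq_{\sf W} \Sigma^1_1\mbox{-}{\sf K}_\sigma{\sf C}_{\N^\N}$, directly contradicting Corollary~\ref{cor:aof-versus-sigma-compact}.

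For the strict inequality $\Sigma^1_1\mbox{-}{\sf KC}_{\N^\N} <_{\sf W} \Sigma^1_1\mbox{-}{\sf K}_\sigma{\sf C}_{\N^\N}$, the $\leq_{\sf W}$ direction is immediate by feeding a single compact $\Sigma^1_1$ instance as a one-term sequence (or padding with copies). For strictness, Proposition~\ref{prop:compact-prouduct-of-two} identifies $\Sigma^1_1\mbox{-}{\sf KC}_{\N^\N}$ with $\widehat{\Sigma^1_1\mbox{-}{\sf C}_2}$, which is parallelizable by construction; were $\Sigma^1_1\mbox{-}{\sf KC}_{\N^\N} \equiv_{\sf W} \Sigma^1_1\mbox{-}{\sf K}_\sigma{\sf C}_{\N^\N}$, the latter would inherit parallelizability, contradicting the first half of the corollary.

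The main obstacle is not located in this corollary at all but rather in Corollary~\ref{cor:aof-versus-sigma-compact}, whose proof uses the higher-hyperimmune-free basis Lemma~\ref{lemma:higher-hyperimmunefree} together with the domination instance built in Proposition~\ref{prop:aof-domination}. Once that separation is in hand, the remaining argument above amounts to a short compactness-in-$\N$ observation plus two applications of the contrapositive, with no further technical difficulty.
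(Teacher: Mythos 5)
Your proposal is correct and follows essentially the same route as the paper: both derive non-parallelizability by noting that $\ac{aof}$ reduces to $\widehat{\Sigma^1_1\mbox{-}{\sf K}_\sigma{\sf C}_{\N^\N}}$ (you simply make explicit, via the decomposition $A=\bigcup_n(A\cap\{n\})$, what the paper dismisses as ``clear'') and then invoke Corollary~\ref{cor:aof-versus-sigma-compact}, and both obtain strictness from the parallelizability of $\Sigma^1_1\mbox{-}{\sf KC}_{\N^\N}$ via Proposition~\ref{prop:compact-prouduct-of-two}. No gaps; the argument matches the paper's.
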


\begin{proof}
  Clearly, $\ac{}$ (and therefore $\ac{aof}$) is Weihrauch reducible
  to the parallelization of the $\sigma$-compact $\Sigma^1_1$-choice
  $\Sigma^1_1\mbox{-}{\sf K}_\sigma{\sf C}_{\N^\N}$.  Therefore, by
  Corollary \ref{cor:aof-versus-sigma-compact}, the $\sigma$-compact
  $\Sigma^1_1$-choice is not parallelizable.  By definition, any
  $\ac{\star}$ is parallelizable, and so is the compact
  $\Sigma^1_1$-choice $\Sigma^1_1\mbox{-}{\sf K}{\sf C}_{\N^\N}$ by
  Proposition \ref{prop:compact-prouduct-of-two}.
\end{proof}

\subsection{The Medvedev lattices of $\ac{cof}$ and $\dc{cof}$}

The choice problem when all sets are cofinite is quite different from
the other restricted choices we study. It is the only one that does
not contains $\ac{fin}$.

Let us fix an instance $A=\prod_n A_n$ of $\ac{cof}$. For every $n$,
$A_n$ is cofinite, so there exists $a_n$ such that for any
$i\geq a_n$, we have $i\in A_n$. Now, call $f:n\mapsto a_n$. We have
that $f\in A$, and for every $g$ pointwise above $f$, we must have
$g\in A$. So we clearly have
$A\leq_{\sf W}\{g\in\om^\om:\forall i, f(i)\leq g(i)\}=A_f$. This
essential property of $\ac{cof}$ prevents an instance to have more
computational power than an $A_f$ for some $f\in\om^\om$.

The cofiniteness still allows some more power, as we will prove in
this section that $\ac{cof}$ is Weihrauch incomparable with both
$\ac{fin}$ and $\ac{aof}$.

\begin{Theorem}\label{th:cof-not-<-aof}
  There exists an $A\in\ac{cof}$ such that for any $B\in\ac{aof}$
  $A\not\leq_M B$.
\end{Theorem}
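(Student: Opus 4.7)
The plan is to use Theorem~\ref{th:max-aof}, which equips $\ac{aof}$ with a Medvedev-maximum element $M\in\ac{aof}$: since every $B\in\ac{aof}$ satisfies $B\leq_M M$, it suffices to produce a single $A\in\ac{cof}$ with $A\not\leq_M M$. I will construct $A=\prod_e A_e$ by diagonalizing against the partial computable functionals $(\Phi_e)_{e\in\N}$, designing the $e$-th factor $A_e$ to defeat $\Phi_e$ as a potential Medvedev reduction from $M$ to $A$.

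For each $e$, the $\Sigma^1_1$ co-enumeration of $A_e$ starts at $\N$ and uses the $\omCK$-approximation of $M$ to search for a pair $(y,a)$ with $y\in M$ and $\Phi_e(y)(e){\downarrow}=a$. If such a pair is identified, we remove $a$, setting $A_e=\N\setminus\{a\}$; otherwise $A_e=\N$. In either case $A_e$ is $\Sigma^1_1$-cofinite (at most one element is ever removed), so $A\in\ac{cof}$.

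The verification is short: suppose for contradiction that $\Phi_e$ were a Medvedev reduction $M\to A$. Then $\Phi_e$ is total on $M$, and since $M\neq\emptyset$ we may take any $y\in M$, so that $\Phi_e(y)(e){\downarrow}$; hence the search succeeds, identifying some witness $(y^*,a^*)$ with $y^*\in M$ and $\Phi_e(y^*)(e)=a^*$, and we obtain $A_e=\N\setminus\{a^*\}$. But then $\Phi_e(y^*)(e)=a^*\notin A_e$, so $\Phi_e(y^*)\notin A$, contradicting the assumption.

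The main obstacle I anticipate lies in rendering the witness-search as a genuine, uniform $\Sigma^1_1$-process in $e$: a pair $(y,a)$ first observed at some stage $\alpha<\omCK$ may correspond to a ``fake'' $y\in M_\alpha\setminus M$, and blindly removing the corresponding $a$ need not defeat $\Phi_e$. I plan to address this by a $\Sigma^1_1$-priority-style argument along the $\omCK$-approximation, where a tentative commitment to the currently plausible pair $(y,a)$ is revised whenever $y$ is later observed to leave some $M_\beta$. By $\Sigma^1_1$-boundedness, only boundedly many such revisions can occur below $\omCK$ for each $e$, so the commitment stabilizes to a genuine pair $(y^*,a^*)$ with $y^*\in M$ whenever $V_e:=\{\Phi_e(y)(e):y\in M\wedge\Phi_e(y)(e){\downarrow}\}$ is nonempty, yielding a uniform $\Sigma^1_1$-code of $A_e$ that makes the informal co-enumeration described above into a legitimate construction of an instance in $\ac{cof}$.
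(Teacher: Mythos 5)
Your reduction to the maximum $M$ of $\ac{aof}$ via Theorem~\ref{th:max-aof} is exactly the paper's first move, and your identification of the obstacle (fake witnesses $y\in M_\alpha\setminus M$ along the $\omCK$-approximation) is the right one. But the resolution you offer does not close the gap. The appeal to ``$\Sigma^1_1$-boundedness'' is not justified: boundedness controls ordinal stages of $\Sigma^1_1$-definable well-founded data, not the number of mind-changes of a transfinite guessing procedure. A ``currently plausible'' pair $(y,a)$ can be refuted cofinally often below $\omCK$, each refutation forcing you to remove a fresh value $a$ from $A_e$ (removals cannot be undone in a $\Sigma^1_1$ co-enumeration), and nothing you say prevents infinitely many distinct values from being removed --- in which case $A_e$ is coinfinite and $A\notin\ac{cof}$, destroying the construction. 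This is most visible in the case you do not treat: when $\Phi_e$ is not total on $M$, the set $V_e$ may be empty, the commitment has nothing to stabilize to, and yet $A_e$ must still come out cofinite. Even when $V_e\neq\emptyset$ you give no rule (such as always tracking $\min$ of the current approximation) for which stabilization could actually be proved.

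The deeper sign that something is missing is that your diagonalization never uses that $M$ is an all-or-finite product. It must: every $A\in\ac{cof}$ is literally an instance of $\ac{foc}$, so $A\leq_M A$ shows the analogous statement with $\ac{aof}$ replaced by $\ac{foc}$ is false, and hence no structure-free removal argument can work. The paper's proof supplies precisely the two ingredients you are missing: (i) after each removal it \emph{waits} until the current approximation of $M$ confirms that the removal was based on a witness that has left $M$ (if the witness was genuine, the construction stalls forever, which is harmless --- the diagonalization has already succeeded with only finitely many removals); and (ii) it proves that this confirmation can occur only finitely often by a level-by-level stabilization argument for the candidate strings $\tau_n$, exploiting that each coordinate of $M$, once observed to be different from $\N$, is finite and can lose elements only finitely many times. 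You would need to supply both of these (or substitutes for them) for your argument to go through.
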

\begin{proof}
  We use the existence of a maximal all-or-finite degree of
  Theorem~\ref{th:max-aof} to actually only prove
  \[\forall B\in\ac{aof},\exists A\in\ac{aoc}: B\not\leq_M A.\]
  
  Fix a $B=\prod_{n\in\N}B_n$, with $B_n\subseteq\N$ being either $\N$
  or finite. We will construct $A=\prod_{e\in\Nb}S_e$, and use $S_e$
  to diagonalize against $\Phi_e$ being a witness for the reduction,
  by ensuring that either $\Phi_e$ is not total on $B$, or
  $\exists k\in\N, \sigma\in\prod_{n<k}B_n$ with
  $\Phi_e(\sigma;e)\downarrow\not\in S_e$. Here is a description of
  the construction of $S_e$, along with sequences of string
  $(\sigma_n)$ and $(\tau_n)$:
  \begin{enumerate}
  \item First of all, wait for a stage where $B\subseteq\dom(\Phi_e)$,
    that is $\Phi_e$ is total on the the current approximation of
    $B$. Define $\sigma_0=\epsilon=\tau_0$.
  \item Let $n$ be the maximum such that $\tau_n$ is defined. Find
    $\sigma_{n+1}\succ\tau_n$ such that
    $\Phi_e(\sigma_{n+1};e)\downarrow\in S_e$. Take $\sigma_{n+1}$ to
    be the least such, and remove $\Phi_e(\sigma_{n+1};e)$ from $S_e$.
  \item Wait for some stage where $\Phi_e(B;e)\subseteq S_e$. If it
    happens, wait again for the current approximation of $B$ to be
    ``all or finite'', which will happen. Take $\tau_{n+1}$ to be the
    greatest prefix of $\sigma_{n+1}$ still in $B$, and return to step
    (2).
  \end{enumerate}
  Let us prove that $S_e$ is cofinite. If the co-enumeration of $S_e$
  stays at step (1), then $S_e=\N$ is cofinite. Otherwise, let us
  prove that there can only be finitely many $\tau_n$ defined.

  Suppose infinitely many $(\tau_n)$ are defined. Then, this must have
  a limit: Let $l$ be a level such that $(\tau_n(l'))_n$ stabilizes
  for all $l'<l$. Start from a stage where they have stabilized. From
  this stage, if $\tau_n(l)$ change, it must have been removed from
  $B_l$. But then, $B_l$ will become finite before the co-enumeration
  continue, and $(\tau_n(l))$ can only take value from $B_l$ and never
  twice the same. Therefore, $(\tau_n(l))$ becomes constant at some
  point.

  If there are only finitely many $\tau_n$, then only finitely many
  things are removed from $S_e$ which is cofinite. It remains to prove
  that $B\not\leq_M A$. Suppose $\Phi_e$ is a potential witness for
  the inequality. Either $\Phi_e$ is not total on $B$, or we get stuck
  at some step in the co-enumeration of $S_e$, waiting for
  $\Phi_e(A;e)\in S_e$ to never happen, leaving us with
  $\Phi_e(A;e)\not\subseteq S_e$.
\end{proof}
\begin{Theorem}\label{th:ABDelta11path}
  For any $A\in\ac{aof}$ and $B\in\ac{cof}$, if $A\leq_MB$, then $A$ contains a $\Delta^1_1$ path.
\end{Theorem}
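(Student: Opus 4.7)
The plan is to produce a $\Delta^1_1$ element of $B$, apply the Medvedev reduction to it, and obtain a $\Delta^1_1$ element of $A$. Suppose $A\leq_M B$ is witnessed by a computable function $\Phi$, so that $\Phi(y)\in A$ for every $y\in B$.

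Write $B=\prod_n B_n$, where each $B_n$ is a cofinite $\Sigma^1_1$ set uniformly in $n$, obtained from the fixed computable code of $B$. Let $L(n,k)$ denote the relation $\forall m\geq k,\;m\in B_n$; since $\Sigma^1_1$ is closed under number quantifiers, $L$ is $\Sigma^1_1$. Cofiniteness of each $B_n$ gives $\forall n\,\exists k\;L(n,k)$, so by the number uniformization property for $\Sigma^1_1$ (cf.\ \cite[Theorem 22.16]{KechrisBook}), $L$ admits a $\Sigma^1_1$ uniformization: a total function $g\colon\N\to\N$ with $\Sigma^1_1$ graph such that $L(n,g(n))$ holds for every $n$. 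Since $g$ is total, its graph being $\Sigma^1_1$ forces it to be $\Pi^1_1$ as well (because $g(n)\neq k$ iff $\exists k'\neq k$ with $g(n)=k'$), hence $g$ is $\Delta^1_1$.

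For every $n$, applying $L(n,g(n))$ with $m=g(n)$ yields $g(n)\in B_n$, so $g\in B$. Therefore $\Phi(g)$ is defined and lies in $A$, and since $\Phi$ is computable and $g$ is $\Delta^1_1$, the point $\Phi(g)$ is $\Delta^1_1$, which gives the required path through $A$.

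The essential content is the construction of a $\Delta^1_1$ element of $B$; the rest is automatic from continuity of the reduction. That construction --- the only potential obstacle --- is exactly $\Sigma^1_1$ number uniformization applied to the cofiniteness predicate. I note in passing that the argument does not actually use the hypothesis $A\in\ac{aof}$: any $A$ (not necessarily of this form) satisfying $A\leq_M B$ for some $B\in\ac{cof}$ must contain a $\Delta^1_1$ path.
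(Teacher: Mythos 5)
There is a genuine gap, and it is located exactly where you flag the ``only potential obstacle'': $\Sigma^1_1$ does \emph{not} have the number uniformization property. Number uniformization for a class is equivalent to its generalized reduction property, which holds for $\Pi^1_1$ (that is what the cited theorem in Kechris actually asserts) and therefore fails for the dual class $\Sigma^1_1$, which has the separation property instead. Concretely, your relation $L(n,k)$ is a total $\Sigma^1_1$ subset of $\N\times\N$ whose choice functions are precisely certain elements of $B=\prod_n B_n$, and a computable instance of $\ac{cof}$ need not contain any $\Delta^1_1$ point at all: the Fact recorded immediately after this theorem (due to Kihara--Marcone--Pauly) exhibits $B\in\ac{cof}$ every element of which computes a function dominating all $\Delta^1_1$ functions, and no such element can be $\Delta^1_1$. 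So the uniformizing function $g$ you invoke simply does not exist in general. Your closing observation that the hypothesis $A\in\ac{aof}$ is never used is the tell-tale symptom: were the argument correct, it would show that every computable instance of $\ac{cof}$ has a $\Delta^1_1$ member, trivializing the cofinite choice principle hyperarithmetically and contradicting the separations proved in this very section.

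The paper's proof avoids this by never producing a point of $B$; it builds a $\Delta^1_1$ point of $A$ directly. Writing $A=\prod_nA_n$, for each $n$ it asks two $\Pi^1_1$ questions about the values of $\Phi(\sigma;n)$ for strings $\sigma$ pointwise above an arbitrary $f$ --- the relevant tail cones $\{g:g\geq f\}$ eventually lie inside $B$ by cofiniteness. Either some single value $k$ is attained above every $f$, in which case $k\in A_n$ and one sets $C(n)=k$; or arbitrarily large values are attained, in which case $A_n$ is infinite, hence (this is where $A\in\ac{aof}$ is essential) $A_n=\N$ and $C(n)=0$ works. Any repair of your strategy must likewise give up on landing inside $B$ and control only the image of the reduction.
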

\begin{proof}
  Assume that $A\leq_MB$ via some functional $\Phi$, and $A$ and $B$
  are of the forms $\prod_nA_n$ and $\prod_nB_n$, respectively.  We
  describe the $\Delta^1_1$ procedure to define $C$:

  Given $n$, in parallel, wait for $n$ to be enumerated in one of
  those two $\Pi^1_1$ sets:
  \begin{enumerate}
  \item If $n$ is enumerated in
    $\{n:\exists k\in\N\forall f\in\Baire, \exists\sigma\geq f,\
    \Phi(\sigma;n)=k\}$, define $C(n)$ to be one of these $k$.
  \item If $n$ is enumerated in
    $\{n:\forall f\in\Baire, \forall k, \exists k'>k,
    \exists\sigma\geq f$ such that $\Phi(\sigma;n)=k'\}$ then define
    $C(n)=0$.
  \end{enumerate}
  Here, $\sigma\geq f$ denotes the pointwise domination order, that
  is, $\sigma(n)\geq f(n)$ for all $n<|\sigma|$.  It is clear that one
  of the two options will happen. Let $f\in\Baire$ be such that
  $\forall k\geq f(n), k\in B_n$. In case (1), it is clear that
  $C(n)\in A_n$. In case (2), it is clear that $A_n$ is infinite,
  therefore it is equal to $\N$ and $C(n)\in A_n$. So $C\in A$.
\end{proof}
\begin{Corollary}
  We have both $\ac{cof}\not\leq_W\ac{aof}$ and $\ac{fin}\not\leq_W\ac{cof}$.
\end{Corollary}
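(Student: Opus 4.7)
The plan is to deduce both non-reductions from the two preceding theorems together with the existence of maximum Medvedev degrees established in Theorem~\ref{th:fin-maximum} and Theorem~\ref{th:max-aof}. The key observation is that a Weihrauch reduction $f\leq_{\sf W}g$ specializes, at a given computable instance $p\in\dom(f)$, to a Medvedev reduction $f(p)\leq_M g(h(p))$ for some computable $h(p)\in\dom(g)$, because the pre- and post-processors are computable and $p$ is computable.

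For $\ac{cof}\not\leq_{\sf W}\ac{aof}$, I would argue by contradiction. Suppose $\ac{cof}\leq_{\sf W}\ac{aof}$ via computable $h,k$. Apply this to the specific computable instance $A\in\ac{cof}$ furnished by Theorem~\ref{th:cof-not-<-aof}: one would obtain a computable instance $B=\ac{aof}(h(A))\in\ac{aof}$ with $A\leq_M B$, directly contradicting the conclusion of Theorem~\ref{th:cof-not-<-aof}. So this half is essentially an immediate Weihrauch-level upgrade of the Medvedev-level separation already proved.

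For $\ac{fin}\not\leq_{\sf W}\ac{cof}$, I would use the maximum element $A^\star\in\ac{fin}$ provided by Theorem~\ref{th:fin-maximum}. If $\ac{fin}\leq_{\sf W}\ac{cof}$, then the same specialization argument yields a computable $B^\star\in\ac{cof}$ with $A^\star\leq_M B^\star$. Since $\ac{fin}\subseteq\ac{aof}$, we may invoke Theorem~\ref{th:ABDelta11path} to conclude that $A^\star$ contains a $\Delta^1_1$ path. The contradiction comes from showing $A^\star$ has no $\Delta^1_1$ member.

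The main (and really the only) obstacle is to certify that the maximum degree in $\ac{fin}$ has no hyperarithmetic element. I would do this by appealing to the equivalence $\ac{fin}\equiv_{\sf W}\Pi^1_1\mbox{-}{\sf Tot}_2$ from Proposition~\ref{prop:compact-choice-total-DNC}: the computable instance of $\ac{fin}$ obtained by taking, for each index $e$ of a partial $\Pi^1_1$ function $\varphi_e\pcolon\N\to 2$, the $\Sigma^1_1$ compact set $S_e$ of all two-valued totalizations of $\varphi_e$, has the property that any element of $\prod_e S_e$ is a total two-valued extension of every partial $\Pi^1_1$ function. A $\Delta^1_1$ such extension would itself be a total $\Pi^1_1$ function $\varphi_{e_0}$, and extending $\varphi_{e_0}$ while disagreeing with it at some point is impossible, contradiction. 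This instance lies below $A^\star$, so $A^\star$ inherits the absence of $\Delta^1_1$ paths, completing the separation.
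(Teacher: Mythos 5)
Your first half is exactly the paper's argument: Theorem~\ref{th:cof-not-<-aof} provides a computable instance $A\in\ac{cof}$ that is not Medvedev-reducible to \emph{any} computable instance of $\ac{aof}$, and a Weihrauch reduction specialized at a computable instance yields precisely such a Medvedev reduction, so $\ac{cof}\not\leq_W\ac{aof}$ follows immediately. For the second half the paper simply cites ``the fact that there exists a $\Sigma^1_1$ finitely branching homogeneous tree with no $\Delta^1_1$ member'' and combines it with Theorem~\ref{th:ABDelta11path}; your route through the maximum element $A^\star$ of $\ac{fin}$ is a legitimate (if slightly roundabout) way to reduce everything to exhibiting \emph{one} such instance, and it correctly handles the fact that your chosen instance $\prod_e S_e$ lives in $2^\N$-valued (dependent/compact) form rather than literally in $\ac{fin}$.

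The genuine gap is in your final diagonalization. An element of $\prod_e S_e$ is a \emph{sequence} $(f_e)_{e\in\N}$ with $f_e$ a totalization of $\varphi_e$; it is not a single function extending every partial $\Pi^1_1$ function, and picking one coordinate $f_{e_1}$, observing it is total $\Delta^1_1$ and hence equal to some $\varphi_{e_0}$, produces no contradiction --- nothing forces any $f_e$ to \emph{disagree} with $\varphi_{e_0}$ anywhere, so ``extending $\varphi_{e_0}$ while disagreeing with it'' never actually occurs. To close the argument you must diagonalize across the coordinates: if $(f_e)_e$ were $\Delta^1_1$, then $h(e)=1-f_e(e)$ is a total $\Delta^1_1$ function, hence $h=\varphi_{e_0}$ for some $e_0$ in the enumeration of partial $\Pi^1_1$ functions; since $f_{e_0}$ extends $\varphi_{e_0}=h$ we get $f_{e_0}(e_0)=h(e_0)=1-f_{e_0}(e_0)$, a contradiction. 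Even simpler, and closer to what the paper has in mind, take the $\Pi^1_1$-${\sf DNC}_2$ instance $\prod_e D_e$ with $D_e=\{i<2:\varphi_e(e){\downarrow}<2\rightarrow i\not=\varphi_e(e)\}$ from Proposition~\ref{prop:compact-choice-total-DNC}: each $D_e$ is a nonempty subset of $\{0,1\}$, so this is literally a computable instance of $\ac{fin}\subseteq\ac{aof}$, and a $\Delta^1_1$ member would be a total $\Delta^1_1$ function $g=\varphi_{e_0}$ with $g(e_0)\not=\varphi_{e_0}(e_0)=g(e_0)$. With either repair your proof is complete and matches the paper's intended argument.
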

\begin{proof}
  The first part is implied by Theorem~\ref{th:cof-not-<-aof}. The second
  part is implied by Theorem~\ref{th:ABDelta11path} and the fact that there
  exists $\Sigma^1_1$ finitely branching homogeneous trees with no
  $\Delta^1_1$ member.
\end{proof}

We now show upper density of $\ac{cof}$, using a similar proof from Theorem~\ref{th:cof-not-<-aof}.

\begin{Theorem}\label{th:cof-upward-dense}
  The Medvedev degrees of $\ac{cof}$ are upward dense.
\end{Theorem}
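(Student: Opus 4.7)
The plan is to show, for every $A\in\ac{cof}$, the existence of some $B\in\ac{cof}$ with $A<_M B$. I would take $B=A\times C$, where $C\in\ac{cof}$ is chosen so that $C\not\leq_M A$: then trivially $A\leq_M B$ via projection onto the first factor, and any reduction $B\leq_M A$ would compose with projection onto the second factor to yield $C\leq_M A$, contradicting the choice of $C$. So the whole task reduces to constructing, from an arbitrary $A=\prod_n A_n\in\ac{cof}$, some $C=\prod_e S_e\in\ac{cof}$ with $C\not\leq_M A$.

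For this I would replay the diagonalization of Theorem~\ref{th:cof-not-<-aof}, only with the roles of the given and the built problems exchanged and with both now lying in $\ac{cof}$. For each index $e$, the $\Sigma^1_1$ co-enumeration of $S_e$ runs the analogous three-step loop: (1) wait until $\Phi_e(\cdot;e)$ is total on the current $A$-approximation; (2) starting from $\tau_0=\epsilon$ and given $\tau_n$, find some $\sigma_{n+1}\succeq\tau_n$ extendible in the current $A$-approximation with $\Phi_e(\sigma_{n+1};e)\!\downarrow = k_{n+1}$ still present in the current $S_e$, and remove $k_{n+1}$ from $S_e$; (3) wait for a stage at which $\Phi_e(y;e)\in S_e$ for every $y$ extendible in the current $A$-approximation, then set $\tau_{n+1}$ to be the largest prefix of $\sigma_{n+1}$ still extendible in the $A$-approximation and return to step (2).

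Two verifications are required. First, $S_e\in\ac{cof}$: if the loop stalls at step (1) or (3) then only finitely many values have been removed, so $S_e$ is cofinite; while if the loop ran forever, at each level $l$ the sequence $(\tau_n(l))_n$ would stabilize, since cofiniteness of $A_l$ means only finitely many elements are ever enumerated into $\N\setminus A_l$, bounding how often the prefix at that level can be invalidated. Hence $(\tau_n)$ would converge coordinatewise to some $\tau_\infty\in A$, forcing $\sigma_n\to\tau_\infty$; by the use principle the values $k_n=\Phi_e(\sigma_n;e)$ would then stabilize, contradicting the fact that they are pairwise distinct by construction. Second, $C\not\leq_M A$: if $\Phi_e$ witnessed $C\leq_M A$, step (1) would succeed by totality of $\Phi_e$ on $A$ and step (3) would succeed since $\Phi_e(A;e)\subseteq S_e$ in the limit, so the loop would generate infinitely many $\tau_n$ — which is impossible by the preceding paragraph, so $\Phi_e$ cannot witness a reduction for any $e$.

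The main obstacle, in comparison with Theorem~\ref{th:cof-not-<-aof}, is extracting the coordinatewise stabilization of $(\tau_n)$: in the all-or-finite setting it was delivered by the fact that an observed finite $B_l$ bounds how often $\tau_n(l)$ can change, whereas here each $A_l$ is infinite. The replacement ingredient is that cofiniteness of $A_l$ means only finitely many elements ever enter $\N\setminus A_l$, so one has to track at which $\omCK$-stage each such enumeration has been observed and verify that the $\Sigma^1_1$ co-enumeration of $S_e$ is compatible with that bookkeeping in order to conclude the stabilization.
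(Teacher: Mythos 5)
Your proposal follows the paper's proof essentially verbatim: the paper diagonalizes with the same three-step co-enumeration of $S_e$ and the same level-by-level stabilization argument exploiting cofiniteness of each coordinate set, leaving the final product step $A\times C$ implicit. Your explicit use-principle contradiction ruling out infinitely many $\tau_n$ is a detail the paper leaves unstated, but it is the intended completion of the same argument.
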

\begin{proof}
  Fix a $B=\prod_{n\in\N}B_n$, with $B_n\subseteq\N$ being
  cofinite. We will construct $A=\prod_{e\in\Nb}S_e$, and use $S_e$ to
  diagonalize against $\Phi_e$ being a witness for the reduction, by
  ensuring that either $\Phi_e$ is not total on $B$, or
  $\exists k\in\N, \sigma\in\prod_{n<k}B_n$ with
  $\Phi_e(\sigma;e)\downarrow\not\in S_e$. Here is a description of
  the construction of $S_e$, along with sequences of string
  $(\sigma_n)$ and $(\tau_n)$:
  \begin{enumerate}
  \item First of all, wait for a stage where $B\subseteq\dom(\Phi_e)$,
    that is $\Phi_e$ is total on the the current approximation of
    $B$. Define $\sigma_0=\epsilon=\tau_0$.
  \item Let $n$ be the maximum such that $\tau_n$ is defined. Find
    $\sigma_{n+1}\succ\tau_n$ such that
    $\Phi_e(\sigma_{n+1};e)\downarrow\in S_e$. Take $\sigma_{n+1}$ to
    be the least such, and remove $\Phi_e(\sigma_{n+1};e)$ from $S_e$.
  \item Wait for some stage where $\Phi_e(B;e)\subseteq S_e$. Take
    $\tau_{n+1}$ to be the greatest prefix of $\sigma_{n+1}$ still in
    $B$, and return to step (2).
  \end{enumerate}
  Let us prove that $S_e$ is cofinite. If the co-enumeration of $S_e$
  stays at step (1), then $S_e=\N$ is cofinite. Otherwise, let us
  prove that there can only be finitely many $\tau_n$ defined.

  Suppose infinitely many $(\tau_n)$ are defined. Then, this must have
  a limit: Let $l$ be a level such that $(\tau_n(l'))_n$ stabilizes
  for all $l'<l$. Start from a stage where they have stabilized. From
  this stage, if $\tau_n(l)$ change, it must have been removed from
  $B_l$. But that can happen only finitely many times, as $B_l$ id
  cofinite. Therefore, $(\tau_n(l))$ becomes constant at some point.

  If there are only finitely many $\tau_n$, then only finitely many
  things are removed from $S_e$ which is cofinite. It remains to prove
  that $B\not\leq_M A$. Suppose $\Phi_e$ is a potential witness for
  the inequality. Either $\Phi_e$ is not total on $B$, or we get stuck
  at some step in the co-enumeration of $S_e$, waiting for
  $\Phi_e(A;e)\in S_e$ to never happen, leaving us with
  $\Phi_e(A;e)\not\subseteq S_e$.
\end{proof}

We here also note some domination property of the cofinite choice.
The following fact is implicitly proved by Kihara-Marcone-Pauly \cite{KMP} to separate $\Sigma^1_1\mbox{-}{\sf WKL}$ and $\widehat{\Sigma^1_1\mbox{-}{\sf C}_\N}$.

\begin{Fact}[\cite{KMP}]
There exists $A\in\ac{cof}$ such that every element $p\in A$ computes a function which dominates all $\Delta^1_1$ functions.
\end{Fact}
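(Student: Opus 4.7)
The plan is to adapt the construction from Proposition~\ref{prop:aof-domination} to the cofinite setting. In that proposition, one enumerated all partial $\Pi^1_1$ functions $(\varphi_e)_{e\in\N}$ on $\om$ and, for each pair $(e,k)$, built a $\Sigma^1_1$ set that shrank down to the singleton $\{\varphi_e(k)\}$ whenever the convergence was witnessed. For the cofinite case the shrinking must stay cofinite, so instead of collapsing to $\{\varphi_e(k)\}$ we will only cut off an initial segment.

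Concretely, I would fix an effective enumeration $(\varphi_e)_{e\in\N}$ of all partial $\Pi^1_1$ functions from $\om$ to $\om$. Every total $\Delta^1_1$ function $f\colon\om\to\om$ appears as some $\varphi_{e(f)}$ in this list. For each pair $\langle e,k\rangle\in\N$, define a $\Sigma^1_1$ set $A_{\langle e,k\rangle}\subseteq\N$ by starting with $A_{\langle e,k\rangle}=\N$ and, as soon as a witness of $\varphi_e(k)\!\downarrow\!=v$ is seen at some countable stage, removing the finite initial segment $\{0,1,\dots,v-1\}$. The resulting set is either $\N$ (if $\varphi_e(k)\!\uparrow$) or the cofinite interval $\{v,v+1,\dots\}$, so $A=\prod_n A_n\in\ac{cof}$. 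Moreover, for any $p\in A$, if $\varphi_e(k)\!\downarrow\!=v$ then $p(\langle e,k\rangle)\geq v$.

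To extract a dominant function from $p$, define $g_p(n)=\max_{e\leq n}p(\langle e,n\rangle)$, which is uniformly computable from $p$. Given any total $\Delta^1_1$ function $f$, pick an index $e_0$ with $\varphi_{e_0}=f$. Then for every $n\geq e_0$ we have
\[
g_p(n)\ \geq\ p(\langle e_0,n\rangle)\ \geq\ \varphi_{e_0}(n)\ =\ f(n),
\]
so $g_p$ eventually dominates $f$. Since this holds for every $\Delta^1_1$ function $f$, the function $g_p$ dominates all $\Delta^1_1$ functions, as required.

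No step should be a real obstacle: the only slightly delicate point is checking that the cofinite construction is indeed $\Sigma^1_1$ uniformly in $\langle e,k\rangle$, which is immediate because $\varphi_e(k)\!\downarrow$ is a $\Pi^1_1$ predicate and removing a $\Sigma^1_1$-definable initial segment from $\N$ preserves the $\Sigma^1_1$ character of the complement; and that $g_p$ remains computable from $p$, which is clear by the definition of $g_p$ as a finite maximum.
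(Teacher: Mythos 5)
Your proof is correct. The paper states this only as a Fact imported from \cite{KMP} without proof, so there is no in-paper argument to compare against; but your construction is precisely the natural cofinite analogue of the paper's own Proposition~\ref{prop:aof-domination} (replacing the collapse to the singleton $\{\varphi_e(k)\}$ by the removal of the initial segment below $\varphi_e(k)$, which keeps every factor cofinite), and all steps check out: each $A_{\langle e,k\rangle}$ is uniformly $\Sigma^1_1$ since its complement $\{m:\exists v>m\ \varphi_e(k)\!\downarrow\!=v\}$ is $\Pi^1_1$, and $g_p$ is a finite maximum, hence computable from $p$ and eventually dominating every total $\Delta^1_1$ function.
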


Therefore, as in the proof of Corollary \ref{cor:aof-versus-sigma-compact}, we can observe the following.

\begin{Corollary}
$\Sigma^1_1\mbox{-}{\sf AC}^{\sf cof}_{\N^\N}\not\leq_{\sf W}\Sigma^1_1\mbox{-}{\sf K}_\sigma{\sf C}_{\N^\N}$.
\end{Corollary}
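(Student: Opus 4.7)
The plan is to mirror the proof of Corollary \ref{cor:aof-versus-sigma-compact} essentially verbatim, now invoking the preceding Fact (from \cite{KMP}) in place of Proposition \ref{prop:aof-domination}. Suppose for contradiction that $\Sigma^1_1\mbox{-}{\sf AC}^{\sf cof}_{\N^\N}\leq_{\sf W}\Sigma^1_1\mbox{-}{\sf K}_\sigma{\sf C}_{\N^\N}$ via computable $h,k$. Take $A$ to be the computable instance provided by the Fact, so that every $p\in A$ computes a function dominating all $\Delta^1_1$ functions.

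Next, I would apply $h$ to the code of $A$ to obtain a computable instance of $\Sigma^1_1\mbox{-}{\sf K}_\sigma{\sf C}_{\N^\N}$, which by definition is a countable union of compact $\Sigma^1_1$ subsets of $\N^\N$. Lemma \ref{lemma:higher-hyperimmunefree} applied to (at least one of) the compact components yields a solution $q$ such that every function higher Turing reducible to $q$, and in particular every function computable from $q$, is majorized by a $\Delta^1_1$ function.

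Finally, the backward reduction gives $p=k(A,q)\in A$, and since $k$ is computable, $p\leq_T q$. Hence any function computable from $p$ is also computable from $q$, so by the previous step it is majorized by some $\Delta^1_1$ function. But by choice of $A$, $p$ computes a function dominating all $\Delta^1_1$ functions, which is the desired contradiction. The main point is simply matching the two sides: no new combinatorial obstacle arises beyond recognizing that the Fact cited from \cite{KMP} plays exactly the role that Proposition \ref{prop:aof-domination} played for $\ac{aof}$, and that the domination argument is insensitive to whether the witnessing instance comes from all-or-finite choice or from cofinite choice.
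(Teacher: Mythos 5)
Your proposal is correct and matches the paper's own argument, which is stated in one line as ``as in the proof of Corollary \ref{cor:aof-versus-sigma-compact}'': the cited Fact from \cite{KMP} supplies the cofinite instance whose solutions compute $\Delta^1_1$-dominating functions, and Lemma \ref{lemma:higher-hyperimmunefree} applied to a nonempty compact component of the $\sigma$-compact instance yields the contradiction. Your added observation that ordinary Turing reducibility is a special case of higher Turing reducibility is exactly the implicit step the paper relies on.
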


\subsection{The Medvedev lattices of $\ac{foc}$, $\dc{foc}$, $\ac{}$, $\dc{}$}

In this part, we study the weakened restriction to sets that are
either finite, or cofinite. This restriction allows any instance from
the stronger restrictions, thus $\ac{aof}$, $\ac{fin}$, and $\ac{cof}$
are Weihrauch reducible to $\ac{foc}$ (and similarly for dependent
choice). It is the weakest form of restriction other than ``no
restriction at all'' that we will consider. However, we don't know if
this restriction does remove some power and is strictly below $\ac{}$
or not, as asked in Question~\ref{qu:sep-foc-hom}.

In the following, we will show upper density for both $\ac{foc}$,
$\dc{foc}$ and $\ac{}$, $\dc{}$. We will give several different proofs
of this result. Theorem~\ref{th:attempt} has a weaker conclusion, but is an
attempt to answer Question~\ref{qu:sep-foc-hom}. This attempt fails, by being
not effective enough to make a diagonalization out of it.

\begin{Theorem}\label{th:attempt}
  For every $A\in\ac{foc}$, there exists $B\in\ac{}$ such that $B\not\leq_MA$.
\end{Theorem}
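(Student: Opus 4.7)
The plan is to adapt the diagonalization construction from Theorems~\ref{th:cof-not-<-aof} and~\ref{th:cof-upward-dense}, tailored so that the restriction that each coordinate of $A$ is finite or cofinite replaces the role previously played by the all-or-finite or cofinite assumption. Given $A = \prod_n A_n \in \ac{foc}$, I will build $B = \prod_e S_e$, where each $S_e\subseteq\N$ is a nonempty $\Sigma^1_1$ set designed to defeat $\Phi_e$ as a candidate reduction witness.

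The co-enumeration of $S_e$ begins with $S_e = \N$ and proceeds as follows. First wait until $\Phi_e(\cdot;e)$ is total on the current approximation of $A$, and set $\tau_0 = \epsilon$. Then iterate: given $\tau_n$, find the least $\sigma_{n+1}\succeq \tau_n$ with $\Phi_e(\sigma_{n+1};e)\downarrow \in S_e$ and remove this value from $S_e$; then wait until $\Phi_e(\sigma;e) \in S_e$ for every $\sigma$ in the current approximation of $A$, and set $\tau_{n+1}$ to the longest prefix of $\sigma_{n+1}$ still in that approximation before returning to the inductive step. If this procedure gets stuck at either waiting step, $\Phi_e$ either fails to be total on $A$ or outputs a removed value on some $\sigma\in A$, so $\Phi_e$ is not a witness of $B\leq_M A$ and $S_e$ remains cofinite.

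The heart of the argument is to show that only finitely many $\tau_n$ can be defined, ensuring $S_e$ is cofinite and therefore nonempty. Mimicking the cited proofs, I would show by induction on $l$ that $(\tau_n(l))_n$ stabilizes: if $A_l$ is finite, then once its $\Sigma^1_1$ approximation settles, $\tau_n(l)$ must lie in the finite set $A_l$ and so takes only finitely many values; if $A_l$ is cofinite, $\tau_n(l)$ changes only when its current value is removed from $A_l$, which happens finitely often. Together with the structure of the co-enumeration (namely, $\tau_{n+1}$ is always a prefix of $\sigma_{n+1}$, which extends $\tau_n$), this level-wise stabilization forces the inductive step to halt after finitely many rounds.

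The main obstacle, as flagged in the commentary preceding the theorem, is that the $B$ produced depends nonuniformly on $A$ through the timing of the $\Sigma^1_1$ approximations of the coordinates $A_n$; consequently, this construction does not yield a single $B\in\ac{}$ defeating every $A\in\ac{foc}$ simultaneously, which is why it falls short of answering Question~\ref{qu:sep-foc-hom}.
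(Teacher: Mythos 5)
Your proposal transplants the $\sigma_n/\tau_n$ diagonalization of Theorems~\ref{th:cof-not-<-aof} and~\ref{th:cof-upward-dense} to the finite-or-cofinite setting, but the key counting step breaks down exactly where $\ac{foc}$ differs from $\ac{cof}$ and $\ac{aof}$: at the coordinates $A_l$ that are finite. Your stabilization argument for such a coordinate reads ``once its $\Sigma^1_1$ approximation settles, $\tau_n(l)$ must lie in the finite set $A_l$.'' But the $\Delta^1_1$ approximation of a finite $\Sigma^1_1$ subset of $\N$ need not settle --- nor even become finite --- at any stage below $\omCK$: its complement is infinite, and the ordinal stages at which the spurious elements are expelled can be cofinal in $\omCK$. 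The value $\tau_n(l)$ is only guaranteed to lie in the \emph{current} approximation of $A_l$, so it can run through infinitely many spurious elements, each discarded later. The argument ``the value changes only when it is removed, and removals are finite in number'' is exactly what cofiniteness of $A_l$ provides and exactly what fails when $A_l$ is finite, since then infinitely many removal events occur at level $l$. Without level-wise stabilization you cannot bound the number of rounds, and an infinite run is fatal twice over: $S_e$ loses infinitely many elements with no safeguard against becoming empty (so $B$ may fail to lie in $\ac{}$), and every value removed during an infinite run is eventually avoided by the approximation of $A$, hence lies outside $\Phi_e(A;e)$, so that $\Phi_e(A;e)\subseteq S_e$ and the diagonalization at coordinate $e$ fails as well.

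This is precisely the difficulty that forces the paper's proof into a different shape. There, $B_e$ is kept infinite by explicit bookkeeping (elements are marked as ``saved'' before each removal, so even infinitely many removals cannot empty $B_e$), and the diagonalization is split into two phases: a first phase whose sole purpose is to force each image $\Phi_e(A_{\upharpoonright\leq l};e)$ to be finite --- its termination rests on a pigeonhole argument using the finite-or-cofinite dichotomy, namely that an unbounded run of removals below level $l$ forces some $A_{l_0}$ with $l_0\leq l$ to pass from cofinite to finite, which can happen at most $l+1$ times --- and a second phase that removes these now-finite images wholesale and is shown to get permanently stuck below the use of $\Phi_e$ on any fixed element of $A$. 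To salvage your one-value-at-a-time scheme you would need a genuinely new termination argument for the rounds (level-wise stabilization is simply unavailable at finite coordinates), or else import the paper's saving device together with a preliminary phase that tames those coordinates first.
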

\begin{proof}
  We will build $B=\prod_e B_e\in\ac{}$ by defining $B_e$ in a uniform
  $\Sigma^1_1$ way, such that if $\Phi_e$ is total on $A$, then
  $\Phi_e(A;e)\not\in B_e$.

  Fix $e\in\N$, and $A=\prod_n A_n\in\ac{foc}$. In our definition of
  the co-enumeration of $B_e$ along the ordinals, there will be two
  main steps in the co-enumeration: The first one forces that if
  $\Phi_e(A;e)\subseteq B_e$, then for every $l$,
  $|\Phi_e(A_{\upharpoonright\leq l})|<\omega$ where
  $A_{\upharpoonright\leq l}=\{\sigma\in\om^{\leq l}:[\sigma]\cap
  A\neq\emptyset\}$. The second step will force that if
  $\Phi_e(A;e)\subseteq B_e$, then $A$ is empty or $\Phi_e$ is not
  total on $A$.

  In order to conduct all these steps, we will need to remove several
  times an element of $B_e$, but we do not want it to become
  empty. This is why in parallel of removing elements from $B_e$, we
  also mark some as ``saved for later'', so we know that even after
  infinitely many removal, $B_e$ is still
  infinite.

  We now describe the first part of the co-enumeration. For clarity,
  we use the formalism of an infinite time algorithm, that could
  easily be translated into a $\Sigma^1_1$ formula.

\begin{algorithm}[H]
  \For{$l\in\omega$}{
    Mark a new element of $B_e$ as saved\;
    \While{$\Phi_e(A_{\upharpoonright\leq l};e)$ is infinite} {
      \For{$i\in\omega$} {
        Mark a new element of $B_e$ as saved\;
        Remove from $B_e$ the first element of $\Phi_e(A_{\upharpoonright\leq l};e)$ that is not saved, if it exists. Otherwise, exit the loop\;
        Wait for $\Phi_e(A;e)\subseteq B_e$\;
      }
      Wait for every $A_n$ to be finite or cofinite\;
      Unmark the elements marked as saved by the ``for $i\in\omega$'' loop\;
    }
  }
\end{algorithm}

Let us first argue that for a fixed $l$, the ``while'' part can only
be executed a finite number of times. At every execution of the ``for
$i\in\omega$'' loop, either one element of $A_{\upharpoonright\leq l}$
is removed, or $\Phi_e(A_{\upharpoonright\leq l};e)$ is finite and we
exit the while loop (this is because at every step, only finitely many
elements are marked as saved). But this means that if a ``for'' loop
loops infinitely many times, by the pigeon hole principle there must
exists a specific level $l_0\leq l$ such that $A_{l_0}$ went from
cofinite to finite. But this can happen only $l+1$ times, and the
``while'' loop can only run $l+1$ many times.

Let us now argue that at every stage of the co-enumeration, including
its end, $B_e$ is infinite. Fix a level $l$, and suppose that at the
beginning of a ``while'' loop, $B_e$ is infinite. As after every loop
of the ``for $i\in\om$'' loop one element is saved, it means that at
after all these infinitely many loop, $B_e$ contains infinitely many
elements. This will happen during only finitely many loops of the
``while'' loop, so at the beginning of level $l+1$, $B_e$ is
infinite. A similar argument with the elements saved by the first
``for $l\in\om$'' loop shows that if the first part of the
co-enumeration ends, $B_e$ is still infinite.

Now we split into two cases. If the first part of the co-enumeration
never stops, as the ``while'' loop is in fact bounded, it means that
the co-enumeration is forever stuck waiting for
$\Phi_e(A;e)\subseteq B_e$. But as this never happens, $B_e$ has the
required property. Otherwise, the first part of the co-enumeration
ends, and we are at a stage where for every $l$,
$\Phi_e(A_{\upharpoonright\leq l};e)$ is finite, but $B_e$ is
infinite. We now continue to the second part of the co-enumeration of
$B_e$:

\begin{algorithm}[H]
  \For{$l\in\omega$}{
    Remove from $B_e$ all the elements of $\Phi_e(A_{\upharpoonright\leq l};e)$\;
    Wait for $\Phi_e(A;e)\subseteq B_e$\;
  }
\end{algorithm}

We argue that this co-enumeration never finish. Let $x\in A$, and
$\sigma\prec x$ such that $\Phi_e(\sigma;e)\downarrow=k$. The
co-enumeration will never reach the stage where $l=|\sigma+1|$, as it
cannot go through $l=|\sigma|$: If it reaches such stage, it will
remove $k$ from $B_e$ and never have $\Phi_e(A;e)\subseteq B_e$. So,
the co-enumeration has to stop at some step of the ``for'' loop,
waiting for $\Phi_e(A;e)\subseteq B_e$ never happening. As $B_e$ is
infinite, it has the required property.
\end{proof}

In order to Weihrauch-separate $\ac{foc}$ from the unrestricted
$\ac{}$, one would need a stronger result with a single $B\in\ac{}$
not Medvedev reducible to any $A\in\ac{foc}$. We could try to apply
the same argument to define
$\prod_{\langle n,e\rangle}B_{\langle n,e\rangle}$, this time
diagonalizing against an enumeration $(S^e)_{e\in\N}$ of
$S^e=\prod_n S^e_n\in\ac{}$. If $S^e$ is not in $\ac{foc}$, the
co-enumeration will be stuck somewhere in the co-enumeration of some
level, with no harm to the global diagonalization.

However, if some particular $S^e$ is empty, we could end up with some
$B_{\langle n,e\rangle}=\emptyset$, making $B$ empty. Indeed, suppose
we reach the second part of the co-enumeration. Then, the malicious
$S^e$ can make sure that every step of the second loop are achieved,
by removing from $S^e$ all strings $\sigma$ such that
$\Phi_e(\sigma;e)\downarrow\not\in B_{\langle n,e\rangle}$, at every
stage of the co-enumeration. As a result, both $S^e$ and
$B_{\langle n,e\rangle}$ will become empty.

\begin{Question}\label{qu:sep-foc-hom}
  Do we have $\ac{foc}<_{\sf W}\ac{}$?
\end{Question}
We now give a stronger result with a much simpler, but not effective, proof. As a corollary, we will obtain the upper density of $\ac{}$ and $\dc{}$.

\begin{Theorem}\label{th:upper-density-strong}
  For every $A\in\dc{}$, there exists $B\in\ac{}$ such that $B\not\leq_MA$.
\end{Theorem}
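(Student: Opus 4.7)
The plan is to fix $A \in \dc{}$, say $A = [T]$ for some pruned $\Sigma^1_1$ tree $T$, and construct $B = \prod_e B_e \in \ac{}$ by diagonalization: each coordinate $B_e$ is arranged to defeat the $e$-th Turing functional $\Phi_e$ as a potential Medvedev reduction $B \leq_M A$. Concretely, I want that for every $e$ there exists $x \in A$ with $\Phi_e(x;e) \notin B_e$; since a Medvedev reduction must hit the $e$-th factor $B_e$ at the $e$-th output coordinate, this suffices to prevent $B \leq_M A$ via $\Phi_e$.

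For each $e$ the value set $V_e := \{\Phi_e(x;e) : x \in A,\ \Phi_e(x;e)\downarrow\}$ is $\Sigma^1_1$ uniformly in $e$ from the $\Sigma^1_1$ code of $A$. When $V_e = \emptyset$ any nonempty $B_e$ works; when $V_e \neq \emptyset$ I want $B_e$ to avoid at least one element of $V_e$. The naive choice $B_e = \N \setminus V_e$ is $\Pi^1_1$, not $\Sigma^1_1$, so I work with a shifted complement
\[
B_e = \{0\} \cup \{\,m \geq 1 : (\exists k < m)\ k \in V_e\,\},
\]
which is uniformly $\Sigma^1_1$, always nonempty, and excludes $\min V_e$ whenever $\min V_e \geq 1$.

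The main obstacle is the edge case $\min V_e = 0$, where the shift trick leaves $0$ in $B_e$ and fails to give a witness. The non-effective resolution is to spend many coordinates on each functional: I reindex the product as $B = \prod_{(e,k)} B_{e,k}$ and define $B_{e,k}$ by a uniformly $\Sigma^1_1$ formula designed to exclude the specific value $k$ whenever $k \in V_e$. No individual coordinate must identify $\min V_e$ in advance; the parallel family $\{B_{e,k}\}_{k\in\N}$ covers all candidates for a bad value, and some $k \in V_e$ always exists when $V_e$ is nonempty. This passage from a precise effective construction to a redundant parallel one is what makes the argument ``not effective'' yet simpler than the tightly controlled infinite-time machinery of Theorem~\ref{th:attempt}.

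Verification is then routine: any candidate reduction $\Phi$ matches some $\Phi_e$; if $\Phi_e$ is not total on $A$, some $x \in A$ has $\Phi_e(x;\cdot)$ undefined at the relevant coordinate; otherwise $V_e \neq \emptyset$ and some coordinate $(e,k)$ defeats $\Phi_e$ via a witness $x \in A$ with $\Phi_e(x;(e,k)) = k \notin B_{e,k}$. The uniform $\Sigma^1_1$ specification of the family $(B_{e,k})$ guarantees $B$ is a legitimate computable instance of $\ac{}$, completing the argument.
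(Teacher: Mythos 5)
There is a genuine gap at the crux of your construction, and it is exactly at the point you flag as the ``main obstacle.'' Your single-coordinate version correctly isolates the problem: $B_e=\{0\}\cup\{m\geq 1:(\exists k<m)\,k\in V_e\}$ equals all of $\N$ whenever $0\in V_e$, so in that case it diagonalizes nothing. But the proposed repair does not close the gap. Once you reindex as $B=\prod_{\langle e,k\rangle}B_{e,k}$, the value set relevant to the factor $B_{e,k}$ is the one attached to \emph{its own} output coordinate, $V_{e,\langle e,k\rangle}=\{\Phi_e(x;\langle e,k\rangle):x\in A\}$, which varies with $k$. To defeat a total $\Phi_e$ you need some $k$ and some $x\in A$ with $\Phi_e(x;\langle e,k\rangle)=k$, i.e., a coordinate at which $\Phi_e$ actually attains the value you reserved there. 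Nothing forces such a fixed point to exist: a functional with $\Phi_e(x;\langle e,k\rangle)=k+1$ for all $x$ and $k$ has $k\notin V_{e,\langle e,k\rangle}$ for every $k$, so every $B_{e,k}$ stays equal to $\N$ and $\Phi_e$ passes through all of your reserved coordinates. The sentence ``some $k\in V_e$ always exists when $V_e$ is nonempty'' conflates membership in the value set of one fixed coordinate with membership in the value set of the coordinate indexed by $k$ itself; these are different sets.

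The correct repair is not to keep $0$ in $B_e$ but to force the value set to be nonempty. The paper's proof (which, unwound, is the same single-coordinate diagonalization you started from) takes $S^e_n=\{m:(\exists X\in A)\,\Phi_e(X;n){\downarrow}=m\}$ padded to all of $\N$ under the $\Sigma^1_1$ condition ``$\Phi_e$ is not total on $A$.'' Since $A\neq\emptyset$, each $S^e_n$ is nonempty, so $B_e=\{m:(\exists k<m)\,k\in S^e_e\}=\{m:m>\min S^e_e\}$ is automatically nonempty (indeed cofinite) while omitting $\min S^e_e$; no extra element need be inserted, so the $0\in V_e$ pathology never arises. If $\Phi_e$ is total on $A$, then $S^e_e=V_e$ and $\min V_e$ is attained by some $x\in A$ at coordinate $e$, giving $\Phi_e(x;e)\notin B_e$; if not, $\Phi_e$ is not a reduction anyway. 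The paper packages this as a contradiction --- a Medvedev-maximal $A$ would, via homogeneity of the targets, yield a uniformly $\Sigma^1_1$ family of nonempty sets $(S^e_n)$ cofinal under inclusion in $\ac{}$, and the interval trick shows no such family exists --- but the content is precisely the treatment of the empty/partial case, which is where your version breaks.
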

\begin{proof}
  We first claim that there is no enumeration of all nonempty elements
  of $\ac{}$. More than that, we will prove that there is no
  $\prod_{n,e\in\N}S^e_n\in\ac{}$ uniformly $\Sigma^1_1$ such that for
  every $B=\prod_n B_n\in\ac{}$, there exists an $e$ such that
  $\prod_nS^e_n\subseteq B$.  Let $(S^e_n)_{n,e\in\N}$ be any
  uniformly $\Sigma^1_1$ enumeration. We construct $(B_e)_{e\in\N}$, a
  witness that this enumeration is not a counter-example to our
  claim. We define $B_e$ by stage: At stage $\alpha$, $B_e$ is equal
  to the interval $]\min(S^e_e);\infty[$, where $\min(S^e_e)$ is
  computed up to stage $\alpha$. This defines a $\Sigma^1_1$ set. We
  have $\prod_n B_n\not\supseteq\prod_nS^e_n$ for every $e\in\N$ and
  the claim is proven.

  Now, suppose that there exists $A\in\dc{}$ such that for every
  $B\in\ac{}$, we have $B\leq_MA$. Let us define $S^e_n$ by
  \[m\in S^e_n\Leftrightarrow\exists X\in
    A:\Phi_e(X;n)\downarrow=m\text{ or $\Phi_e$ is not total on
      $A$}.\]
  Given any $B\in\ac{}$, as $B\leq_M A$, fix a witness
  $\Phi_e$. We have $\Phi_e(A)\subseteq B$, and as $B$ is homogeneous
  we also have $\prod_nS^e_n\subseteq B$. Then, $(S^e_n)_{e,n\in\N}$
  would be a contradiction to our first claim.
\end{proof}

\begin{Corollary}
  We have upward density for $\ac{}$ and $\dc{}$.
\end{Corollary}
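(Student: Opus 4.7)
The plan is to deduce the corollary directly from Theorem~\ref{th:upper-density-strong} by combining it with a product construction. Given an instance $A$ in $\ac{}$ (respectively in $\dc{}$), that theorem supplies a homogeneous $B\in\ac{}$ with $B\not\leq_M A$, and the remaining task is to package $A$ and $B$ into a single instance strictly above $A$ that still belongs to the correct class.

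For $\ac{}$, I would take $A\in\ac{}\subseteq\dc{}$ and let $B\in\ac{}$ be supplied by Theorem~\ref{th:upper-density-strong}. Set $C=A\times B$, using any reasonable computable pairing. Because a product of homogeneous $\Sigma^1_1$ sets is homogeneous and $\Sigma^1_1$, we have $C\in\ac{}$. The first projection witnesses $A\leq_M C$. Conversely, if $C\leq_M A$ held, composing with the second projection would yield $B\leq_M A$, contradicting the choice of $B$. Hence $A<_M C$, as required.

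For $\dc{}$ the same recipe works: given $A\in\dc{}$, apply Theorem~\ref{th:upper-density-strong} to obtain $B\in\ac{}\subseteq\dc{}$, and form the $\Sigma^1_1$ tree whose set of paths is $[A]\times[B]$. Since $\dc{}$ is closed under such products, this again gives an instance $C\in\dc{}$, and the two projections establish $A<_M C$ by exactly the same argument.

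I do not foresee any serious obstacle here: all the substantive work is already carried out in Theorem~\ref{th:upper-density-strong}. The only things to verify are the trivial closure-under-products property for $\ac{}$ and $\dc{}$ and the inclusion $\ac{}\subseteq\dc{}$ that lets Theorem~\ref{th:upper-density-strong} be applied to instances of $\ac{}$.
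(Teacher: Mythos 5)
Your proof is correct and is exactly the argument the paper intends (the corollary is left without an explicit proof, but the same product trick $A\times B$ with $B\not\leq_M A$ is spelled out in the proof of Theorem~\ref{th:aof-sup-fin}). The only ingredients beyond Theorem~\ref{th:upper-density-strong} are the closure of $\ac{}$ and $\dc{}$ under products and the inclusion $\ac{}\subseteq\dc{}$, both of which you correctly identify and which hold by interleaving the factors.
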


There is another non-effective proof showing upward density for $\dc{}$ (but not for $\ac{}$).
Indeed, remarkably, the result shows that there is no greatest nonempty $\Sigma^1_1$ closed set even with respect to hyperarithmetical Muchnik degrees.
We say that $A\subseteq\om^\om$ is {\em hyperarithmetically Muchnik reducible to $B\subseteq\om^\om$} (written $A\leq^{\sf HYP}_wB$) if for any $x\in A$ there is $y\in B$ such that $y\leq_hx$, that is, $y$ is hyperarithmetically reducible to $x$.

\begin{Fact}[Gregoriades {\cite[Theorem 3.13]{Gre}}]\label{fact:gregoriades}
If $P$ is a $\Delta^1_1$ closed set with no $\Delta^1_1$ element, then there exists a clopen set $C$ such that $P\cap C\not=\emptyset$ and $P<^{\sf HYP}_wP\cap C$.
\end{Fact}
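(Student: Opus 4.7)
I interpret the statement as requiring a basic clopen cylinder $C=[\sigma]$ with $P\cap C\neq\emptyset$ together with some $y\in P$ such that no element of $P\cap C$ is hyperarithmetically reducible to $y$; this is what makes $P\cap C$ genuinely harder than $P$ in the hyperarithmetic Muchnik ordering. The plan is to locate such a $y$ of carefully controlled hyperarithmetic complexity in $P$ and then to exploit the no-$\Delta^1_1$-element hypothesis to produce a cylinder disjoint from the hyper-cone below $y$.

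First, I would apply the Gandy basis theorem to the nonempty $\Sigma^1_1$ set $P$ to obtain $y\in P$ with $\omega_1^y=\omCK$. The cone $L(y)=\{z:z\leq_h y\}$ is $\Sigma^1_1(y)$, and its intersection $P_y:=P\cap L(y)$ is a countable subset of $P$, enumerable hyperarithmetically in $y$. Let $T$ be the $\Delta^1_1$ tree with $[T]=P$. The next step is to search in $T$ for a node $\sigma$ that is extendable in $T$ but avoided by every element of the enumeration of $P_y$; setting $C=[\sigma]$ then yields a nonempty $P\cap C$ disjoint from $L(y)$, which is the desired strict Muchnik separation.

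The main obstacle is ruling out the bad case in which $P_y$ is dense in $P$, so that no such $\sigma$ exists. In that case, the idea is to use the hyperarithmetic enumeration of $P_y$ together with the $\Delta^1_1$-ness of $T$ to make a canonical level-by-level choice of extensions guided by the approximations to $P_y$, producing an infinite path through $T$. The Gandy-basis property $\omega_1^y=\omCK$ is then invoked to absorb the $y$-parameter, certifying that the resulting path is genuinely $\Delta^1_1$ rather than merely $\Delta^1_1(y)$, contradicting the hypothesis that $P$ has no $\Delta^1_1$ element. Converting the topological density of $P_y$ in $P$ into a bona fide $\Delta^1_1$ construction of a path through $T$ is the technical heart of the argument.
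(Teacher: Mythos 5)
The paper does not actually prove this statement --- it is imported verbatim from Gregoriades with a citation --- so your attempt has to stand entirely on its own. Your reading of the reducibility is the intended one, and the first half of your plan is sound: if for some $y\in P$ with $\omega_1^y=\omCK$ there is an extendable $\sigma$ with $[\sigma]\cap P\cap\{z:z\leq_h y\}=\emptyset$, then $C=[\sigma]$ witnesses the theorem. The problem is the other half, which you yourself label ``the technical heart'' and then do not supply. You give no argument that density of $P\cap\{z:z\leq_h y\}$ in $P$ forces a $\Delta^1_1$ member of $P$, and the one concrete mechanism you propose --- that $\omega_1^y=\omCK$ lets you ``absorb the $y$-parameter'' and promote a $\Delta^1_1(y)$-constructed path to a genuinely $\Delta^1_1$ one --- is false. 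Being low for $\omega_1$ does not collapse $\Delta^1_1(y)$ to $\Delta^1_1$: the point $y$ itself is $\Delta^1_1(y)$, satisfies $\omega_1^y=\omCK$, and lies in $P$, hence is not $\Delta^1_1$ by hypothesis. So the dichotomy's bad case is left genuinely open, and the proposed way of closing it cannot work.

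There are two further soft spots. First, $\{z:z\leq_h y\}$ is a properly $\Pi^1_1(y)$ set (the largest thin $\Pi^1_1(y)$ set), not something you can enumerate hyperarithmetically in $y$; indeed $\mathcal{O}\not\leq_h y$ whenever $\omega_1^y=\omCK$, so there is no $y$-hyperarithmetic listing of $P_y$ available to guide a level-by-level construction. This is harmless for the purely existential Case 1 but undercuts the effective construction you would need in Case 2. Second, and more structurally, the theorem only asserts that \emph{some} $y\in P$ and \emph{some} cylinder work; nothing guarantees that the dichotomy resolves favorably for an arbitrary Gandy-basis element fixed at the outset. A correct proof must either choose $y$ far more carefully (e.g., via Gandy--Harrington genericity) or argue globally, for instance by applying Spector--Gandy and boundedness to the $\Pi^1_1$ relation ``$\exists z\leq_h x\,(z\in P\cap[\sigma])$'' under the assumption that it holds for every $x\in P$ and every extendable $\sigma$. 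As it stands, the proposal is a plausible outline whose decisive step is missing and whose only concrete idea for that step is incorrect.
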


Note that any $P$ satisfying the conclusion of the above fact cannot be homogeneous since if $P$ is homogeneous, $C$ is clopen, and $P\cap C$ is nonempty, then we always have $P\cap C\equiv_MP$.
So, Fact \ref{fact:gregoriades} does not imply Theorem \ref{th:upper-density-strong}.

\begin{Corollary}
For any nonempty $\Sigma^1_1$ set $A\subseteq\om^\om$, there is a nonempty $\Pi^0_1$ set $B\subseteq\om^\om$ such that $A<_w^{\sf HYP}B$.
\end{Corollary}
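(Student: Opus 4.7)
The plan is to split on whether $A$ contains a $\Delta^1_1$-element; in the trivial-degree case a Harrison set finishes the job directly, while the main case transfers Fact~\ref{fact:gregoriades} through a $\Pi^0_1$-projection representation of $A$.

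First, I would isolate an auxiliary ``hard'' $\Pi^0_1$ set. Fixing a Harrison-type computable pseudo-well-ordering $L$ of $\om$, let
\[H=\{f\in\Baire:(\forall n)\ f(n+1)<_L f(n)\}.\]
Then $H$ is a nonempty $\Pi^0_1$ subset of $\Baire$ with no $\Delta^1_1$-element, by Harrison's pseudo-well-ordering theorem.

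Suppose first that $A$ contains a $\Delta^1_1$-element $a_0$. I would take $B=H$. Every $b\in H$ satisfies $a_0\leq_h b$ (since $a_0$ is hyperarithmetic), witnessing $A\leq^{\sf HYP}_w B$; and if some $b\in H$ were $\leq_h a_0$, then $b$ itself would be $\Delta^1_1$, contradicting the defining property of $H$, so $a_0\in A$ witnesses $B\not\leq^{\sf HYP}_w A$.

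In the main case, $A$ has no $\Delta^1_1$-element. I would represent $A$ as the projection $\pi_1(\tilde A)$ of a $\Pi^0_1$ set $\tilde A\subseteq\Baire$ (identifying $\Baire\times\Baire$ with $\Baire$ via a computable pairing). Since $\pi_1$ is computable, any $\Delta^1_1$-element of $\tilde A$ would project to a $\Delta^1_1$-element of $A$; hence $\tilde A$ inherits the absence of $\Delta^1_1$-elements, and Fact~\ref{fact:gregoriades} applies to the $\Delta^1_1$-closed set $\tilde A$. It yields a clopen $C$ with $B:=\tilde A\cap C$ nonempty and $\tilde A<^{\sf HYP}_w B$. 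Since $C$ is clopen, $B$ is $\Pi^0_1$. Every $b\in B\subseteq\tilde A$ computably projects to a member of $A$, so $A\leq^{\sf HYP}_w B$. For strictness, the Fact furnishes $p\in\tilde A$ such that no $q\in B$ satisfies $q\leq_h p$; then $\pi_1(p)\in A$ witnesses $B\not\leq^{\sf HYP}_w A$, because any $q\in B$ with $q\leq_h\pi_1(p)$ would also satisfy $q\leq_h p$ by composing with the computable projection. The main subtlety lies precisely in this transfer step: the hypothetical reduction $q\leq_h\pi_1(p)$ must be composed with $\pi_1$ (which is computable, hence preserves $\leq_h$) to contradict the Gregoriades witness $p$. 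Beyond that, the argument merely combines Fact~\ref{fact:gregoriades} with the classical Harrison pseudo-well-ordering construction.
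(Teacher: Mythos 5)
Your proof is correct and takes essentially the same route as the paper's: pass to a $\Pi^0_1$ set projecting onto $A$, split on whether a $\Delta^1_1$ element exists, and apply Fact~\ref{fact:gregoriades} in the nontrivial case. You merely make explicit the two steps the paper leaves implicit, namely the Harrison-type $\Pi^0_1$ set handling the $\Delta^1_1$-element case and the transfer of strictness through the computable projection.
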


\begin{proof}
For any nonempty $\Sigma^1_1$ set $A$, it is easy to see that there is a nonempty $\Pi^0_1$ set $A^\ast$ such that $A\leq_MA^\ast$.
If $A^\ast$ has a $\Delta^1_1$ element, then the assertion is clear.
If $A^\ast$ has no $\Delta^1_1$ element, by Fact \ref{fact:gregoriades}, there is clopen $C$ such that $A\leq_MA^\ast<_w^{\sf HYP}A^\ast\cap C$.
\end{proof}

In \cite{densityPi01}, Cenzer and Hinman showed that the lattice of $\Pi^0_1$ classes in Cantor space is dense. Here we already showed upward density, we now prove downward density:

\begin{Theorem}\label{thm:lower-density}
  $\dc{}$ is downward dense. In other words, for every $A\in\dc{}$ with no computable member, there exists $B>_M\Baire$ in $\dc{}$ such that \[\Baire<_MA\cup B<_MA.\]
\end{Theorem}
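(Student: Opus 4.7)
The plan is to reduce the theorem to constructing a single $B\in\dc{}$ with $\Baire<_M B$ and $A\not\leq_M B$. Indeed, given such a $B$, since $A$ has no computable member, neither does $A\cup B$, so $\Baire<_M A\cup B$. Moreover, any reduction $A\leq_M A\cup B$ via a computable functional $\Phi$ would in particular satisfy $\Phi(B)\subseteq A$, witnessing $A\leq_M B$ and contradicting our assumption. Hence $\Baire<_M A\cup B<_M A$, as required.

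To construct $B$, I would run a $\Sigma^1_1$-effective priority argument producing a shrinking sequence of nonempty $\Sigma^1_1$-trees $T^0\supseteq T^1\supseteq\cdots$ with $B=\bigcap_s[T^s]$. The requirements, to be met in order of priority, are: $N_e$, asserting that the $e$-th total computable function is not a branch of $T^s$ from some stage on; and $D_e$, asserting that some $y\in B$ satisfies $\Phi_e(y)\notin A$. The $N_e$ requirements are handled in the standard way by extending past the first level witnessing $\Phi_e^\emptyset(n)$ and pruning the branch agreeing with $\Phi_e^\emptyset$, while keeping $T^s$ sufficiently branching.

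For $D_e$, the strategy is to search for a node $\sigma\in T^s$ and a length $k$ such that every extension $y\succeq\sigma$ in $[T^s]$ satisfies $\Phi_e(y)\upharpoonright k\notin T_A$, where $T_A$ is a $\Sigma^1_1$-tree coding $A$. When such a pair $(\sigma,k)$ exists, we set $T^{s+1}$ to the subtree of extensions of $\sigma$ in $T^s$, permanently forcing $\Phi_e(y)\notin A$ for every $y\in[T^{s+1}]$. The search condition is $\Pi^1_1$, hence definable within the $\Sigma^1_1$ framework, and each $T^s$ remains a $\Sigma^1_1$ tree uniformly in $s$, so $B$ lies in $\dc{}$.

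The main obstacle is the failure case of $D_e$, in which for every $\sigma\in T^s$ and every $k$ some extension $y\succeq\sigma$ in $[T^s]$ has $\Phi_e(y)\upharpoonright k\in T_A$. In this situation $\Phi_e$ threads $T^s$ densely through $T_A$, and the hypothesis that $A$ has no computable member must be leveraged: one refines $T^s$ by introducing a sufficient degree of computable splitting so that $\Phi_e$ is forced to send two branches of $T^s$ to incompatible initial segments of $T_A$, thereby producing some $y\in[T^s]$ with $\Phi_e(y)\notin[T_A]$, for otherwise the uniform threading would be convertible into a computable path through $T_A$. Carrying this out while preserving the $\Sigma^1_1$-definability of the construction and the nonemptiness of each $T^s$, and resolving the priority conflicts between the $N_e$'s and $D_e$'s, constitutes the technical heart of the argument, paralleling the density proof of Cenzer and Hinman for $\Pi^0_1$ classes.
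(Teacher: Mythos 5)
Your opening reduction is fine and matches what the paper implicitly does: it suffices to produce a single $B\in\dc{}$ with $\Baire<_MB$ and $A\not\leq_MB$, and then $A\cup B$ witnesses the theorem. Where you diverge is in how $B$ is built: you attempt a Cenzer--Hinman style perfect-tree construction with a shrinking sequence of $\Sigma^1_1$ trees, whereas the paper builds $B=\{X\}$ for a single non-computable hyperarithmetical real $X$ obtained by forcing with finite strings over all of $\Baire$, arranged so that $A$ has no $X$-computable member (the forcing relation is $\Pi^1_1$ and total, hence $\Delta^1_1$, which is what makes $X$ hyperarithmetical and $\{X\}$ a legitimate element of $\dc{}$).

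The gap is in the failure case of $D_e$, and it is not a technicality you can defer. First, the negation of your success condition only says that for every $\sigma\in T^s$ and every $k$ \emph{some} $y\in[T^s]\cap[\sigma]$ has $\Phi_e(y)\upharpoonright k\in T_A$; to convert this "uniform threading" into a computable path of $T_A$ you would have to locate such witnesses effectively, but both "$\tau\in T^s$" and "$\Phi_e(\tau)\upharpoonright k\in T_A$" are $\Sigma^1_1$ conditions, so the greedy search is not computable and no contradiction with "$A$ has no computable member" is obtained. Second, the proposed remedy is not a remedy: forcing $\Phi_e$ to send two branches of $T^s$ to incompatible strings of $T_A$ does not produce any $y$ with $\Phi_e(y)\notin[T_A]$, since two incompatible nodes of $T_A$ can each lie on paths of $T_A$. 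Third, your success condition also quantifies $k$ uniformly over the cone, so it can fail even when $D_e$ is met vacuously because $\Phi_e$ is partial on all of $[T^s]\cap[\sigma]$ with no uniform bound on the length of convergence. The fix, which is the paper's argument, is to force with single strings over the \emph{full} Baire space and to use the three-way dichotomy: for every $\sigma$ there is $\tau\succeq\sigma$ with either $\Phi_e(\tau)\notin T_A$, or $[\tau]\cap\dom(\Phi_e)=\emptyset$; if neither option is ever available above $\sigma$, then \emph{every} extension $\tau$ of $\sigma$ satisfies $\Phi_e(\tau)\in T_A$ and can be properly extended in the output, and it is exactly this universal guarantee that makes the greedy construction of a path of $T_A$ computable, contradicting $A>_M\Baire$. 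Your cone-based condition puts the quantifiers in the wrong place for this last step to go through.
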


\begin{proof}
We first reduce the problem to finding a non-computable hyperarithmetical real $X$ such that $A$ contains no $X$-computable point.
Indeed, for any computable ordinal $\alpha$, by assuming that $A$ has no $\emptyset^{(\alpha)}$-computable point, we construct a hyperarithmetical real $X\not\leq_T\emptyset^{(\alpha)}$ such that $A$ contains no $X^{(\alpha)}$-computable element.
If such an $X$ exists, then we have $\om^\om<^\alpha_MA\cup\{X\}<^\alpha_MA$, where $\leq_<M^\alpha$ indicates the Medvedev reducibility with the $\alpha$-th Turing jump.

   It suffices to show that $\Phi_e(X\oplus\emptyset^{(\alpha)})\not\in A$ for any $e$, and $\emptyset^{(\alpha)}<_TX\oplus\emptyset^{(\alpha)}\equiv_TX^{(\alpha)}$.
   The latter condition is ensured by letting $X$ be $\alpha$-generic.
    To describe a strategy for ensuring the first condition, fix a pruned $\Sigma^1_1$ tree $T_A$ such that $[T_A]=A$.
    Let $\Phi_e^\alpha$ be the $\emptyset^{(\alpha)}$-computable function mapping $Z$ to $\Phi_e(Z\oplus\emptyset^{(\alpha)})$.
    There are two ways for $\Phi_e^\alpha$ to not be a witness that $A$ has no $X$-computable element: either $\Phi_e^\alpha(\sigma)\not\in T_A$ for some $\sigma\prec X$, or $X\not\in\dom(\Phi_e^\alpha)$. 
    Let us argue that we have the following:
    For any $e\in\N$ and $\sigma\in\om^{<\om}$ there exists a finite string $\tau$ extending $\sigma$ such that
    \begin{equation}\label{disj}
\text{either $\Phi_e^\alpha(\tau)\not\in T_A$ or $[\tau]\cap\mathrm{dom}(\Phi_e^\alpha)=\emptyset$}
    \end{equation}
    Indeed, if it were not the case for some $e\in\omega$, we would have a string $\sigma$ such that for every $\tau$, $\Phi_e^\alpha(\tau)\in T_A$ and there exists an extension $\rho\succ\tau$ such that $\Phi_e^\alpha(\rho)$ strictly extends $\Phi_e^\alpha(\tau)$, allowing us to compute a path of $T_A$, which is impossible as $A>_M\omega^\omega$.

Begin with the empty string $\sigma_0=\emptyset$.
For $e$ let $D_e$ be the $e$-th dense $\Sigma^0_\alpha$ set of strings.
Given $\sigma_e$, in a hyperarithmetical way, one can find a string $\sigma^\ast_{e}\in D_e$ extending $\sigma_e$.
    Now, by (\ref{disj}) we have a $\Pi^1_1$ function assigning $e$ to the first $\sigma_{e+1}$ extending $\sigma_{e}^\ast$ we find verifying (\ref{disj}). This function is total, and then $\Delta^1_1$. Moreover, it is clear that $\Phi_e(X)$ does not define an element of $A$ for any $X$ extending $\sigma_{e+1}$.
\end{proof}

\subsection{Axiom of choice versus dependent choice}

H.\ Friedman showed that the axiom of $\Sigma^1_1$-dependent choice is strictly stronger than the axiom of $\Sigma^1_1$-choice in the context of second order arithmetic (cf.\ \cite[Corollary VIII.5.14]{SOSOA:Simpson}).
Although the Weihrauch degrees of the principles $\Sigma^1_1\mbox{-}{\sf DC}_{\N^\N}$ and $\Sigma^1_1\mbox{-}{\sf AC}_{\N\to\N^\N}$ are equal (Observation \ref{obs:choice} and Proposition \ref{prop:depecho}), we will see that $\Sigma^1_1\mbox{-}{\sf DC}_{\N}$ is strictly stronger than $\Sigma^1_1\mbox{-}{\sf AC}_{\N\to\N}$, which finally solves Question \ref{que:Borel-choice}:

\begin{Theorem}\label{thm:main-theorem}
$\ac{}<_{\sf W}\dc{}\equiv_{\sf W}\ac{fin}\star\ac{cof}$.
\end{Theorem}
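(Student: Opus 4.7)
My plan splits the theorem into the equivalence $\dc{} \equiv_{\sf W} \ac{fin} \star \ac{cof}$ and the strict inequality $\ac{} <_{\sf W} \dc{}$.

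For the equivalence, the direction $\ac{fin} \star \ac{cof} \leq_{\sf W} \dc{}$ is routine: by Proposition~\ref{prop:depecho}, $\dc{} \equiv_{\sf W} \Sigma^1_1\mbox{-}{\sf C}_{\N^\N}$, and any solution to the compositional product can be encoded uniformly as an element of a single $\Sigma^1_1$ subset of $\N^\N$, so the whole product Weihrauch-reduces to $\Sigma^1_1\mbox{-}{\sf C}_{\N^\N}$. The nontrivial direction is $\dc{} \leq_{\sf W} \ac{fin} \star \ac{cof}$. Given a nonempty pruned $\Sigma^1_1$ tree $T$, I plan to produce a path in $[T]$ in two phases matching the compositional structure. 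In phase (i), I feed $\ac{cof}$ a uniformly $\Sigma^1_1$ family of cofinite subsets of $\N$ indexed by nodes and stage-counters; the family is designed so that any choice function within it yields, for each node $\sigma \in T$, a threshold bounding the indices of extendible successors of $\sigma$ that remain relevant at the current stage of the $\Sigma^1_1$-approximation of $T$. In phase (ii), from this bounding data I assemble a compact $\Sigma^1_1$-subtree $T^\ast \subseteq T$ with $[T^\ast] \neq \emptyset$ and invoke $\ac{fin} \equiv_{\sf W} \Sigma^1_1\mbox{-}{\sf KC}_{\N^\N}$ (Proposition~\ref{prop:compact-prouduct-of-two}) to extract a path of $T^\ast$, which is also a path of $T$.

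For the strict inequality $\ac{} <_{\sf W} \dc{}$, I claim first that every computable instance of $\ac{} \equiv_{\sf W} [\Sigma^1_1\mbox{-}{\sf AC}_{\N \to \N}]_{\sf mv}$ admits a $\Delta^1_1$ solution. Indeed, given a computable code of a $\Sigma^1_1$ set $S \subseteq \N \times \N$ with all sections nonempty, the generalized reduction property for $\Sigma^1_1$ (cf.\ \cite[Theorem 22.14]{KechrisBook}) yields a $\Sigma^1_1$-uniformization $S^\ast \subseteq S$ that is the graph of a total function $f \colon \N \to \N$; since the complement of the graph of a total $\N$-valued function with $\Sigma^1_1$ graph is again $\Sigma^1_1$, the graph is $\Delta^1_1$, so $f$ is hyperarithmetical. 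On the other hand, $\dc{}$ admits computable instances with no hyperarithmetical solution: classically, there are computable pruned trees $T \subseteq \N^{<\N}$ with $[T] \neq \emptyset$ such that $\omega_1^X > \omCK$ for every $X \in [T]$, and such $[T]$ contains no $\Delta^1_1$ element. A putative Weihrauch reduction $\dc{} \leq_{\sf W} \ac{}$ applied to such a $T$ would, by applying the canonical $\Sigma^1_1$-uniformized solution to the computable intermediate instance, produce a $\Delta^1_1$ path in $[T]$, a contradiction.

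The principal obstacle is phase (i) of the $\dc{} \leq_{\sf W} \ac{fin} \star \ac{cof}$ reduction. For a general $\Sigma^1_1$-tree $T$, the natural linear ordering governing the approximation, a Kleene-Brouwer-style ordering on the stage-by-stage approximations of $T$, need not be well-founded but may only be pseudo-well-founded, and bounding thresholds must be extended coherently along the non-standard portion of this hierarchy. Harrington's unpublished theorem on jump hierarchies along a pseudo-well-ordering is the key tool here: it guarantees that one can define the requisite $\Sigma^1_1$-cofinite family so that its choice function faithfully bounds the extendible subtree even in the pseudo-hierarchy regime, enabling phase (ii) to harvest an actual path. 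Phase (ii) itself is then a direct application of compact $\Sigma^1_1$-choice to a now compact (hence $\Sigma^1_1$-closed) subtree, as in Proposition~\ref{prop:compact-prouduct-of-two}.
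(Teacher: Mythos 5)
Your separation argument contains a fatal error: it is \emph{not} true that every computable instance of $\ac{}$ has a $\Delta^1_1$ solution. The class $\Sigma^1_1$ does not have the number uniformization property (that property holds for $\Pi^1_1$, by Kreisel's theorem; a pointclass and its dual cannot both have generalized reduction), so there is no total $\Sigma^1_1$-graph uniformization of a total $\Sigma^1_1$ subset of $\N\times\N$ in general, and your inference that the selector is $\Delta^1_1$ has a false premise. Concretely, the paper itself exhibits computable instances of $\ac{cof}$, of $\ac{aof}$, and even of $\ac{fin}$ (a $\Sigma^1_1$ finitely branching homogeneous tree with no $\Delta^1_1$ member) every solution of which computes a function dominating all $\Delta^1_1$ functions, hence is non-hyperarithmetical. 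This is exactly why the separation $\ac{}<_{\sf W}\dc{}$ is hard: a crude complexity-of-solutions count cannot work. The paper instead shows ${\sf ATR}_2\not\leq_{\sf W}\ac{}$ by a completeness argument: assuming all computable ${\sf ATR}_2$-instances are Medvedev below a single homogeneous $\Sigma^1_1$ set $S$, homogeneity plus continuity forces $S$ to decide, for each linear order, whether it bounds a jump hierarchy or a descending sequence; this yields two disjoint $\Sigma^1_1$ index sets containing ${\sf WO}$ and ${\sf NPWO}$ respectively (the latter via H.~Friedman's theorem that an order carrying a jump hierarchy has no hyperarithmetic descending sequence), and Lusin separation then produces a $\Delta^1_1$ separator of ${\sf WO}$ from ${\sf NPWO}$, contradicting Harrington's unpublished theorem. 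None of this structure is present in your proposal, and Harrington's theorem is needed here, not where you placed it.

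Your reduction $\dc{}\leq_{\sf W}\ac{fin}\star\ac{cof}$ has the right two-phase shape but is missing its key idea and misattributes the tool. For a general pruned $\Sigma^1_1$ tree a node may have infinitely many extendible successors, so no cofinite-choice answer can ``bound the extendible successors''; you must instead bound a single distinguished path. The paper's argument takes the \emph{leftmost path} $f_T$, uses the fact that it admits a finite-mind-change higher ($\Delta^1_1$) approximation with change-counting function $m_T$ majorizing $f_T$, builds uniformly $\Sigma^1_1$ cofinite sets $S_n$ whose members all exceed $m_T(n)$, and then observes that any $g\in\prod_nS_n$ majorizes $f_T$, so the pruned-down tree $T^g$ is finite branching \emph{and still infinite} because $f_T\in[T^g]$ --- the nonemptiness you assert but do not justify. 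Phase (ii), converting $T^g$ to a binary tree and applying $\Sigma^1_1$-${\sf WKL}\equiv_{\sf W}\ac{fin}$, matches the paper. Harrington's pseudo-hierarchy theorem plays no role in this lemma and cannot substitute for the leftmost-path/finite-change approximation argument.
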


The above result also implies that 
\[\ac{}<_{\sf W}\ac{}\star\ac{}\equiv_{\sf W}\ac{}\star\ac{}\star{}\ac{}.\]

Therefore, Theorem \ref{thm:main-theorem} provides a new natural example of a multivalued function such that the hierarchy of the compositional product with itself stabilizes at the second level.
Another such an example has also been given by \cite{pauly-kihara2-mfcs}.

We now divide Theorem \ref{thm:main-theorem} into two lemmas.

\begin{Lemma}\label{lem:main-separation}
${\sf ATR}_2\not\leq_{\sf W}\ac{}$.
\end{Lemma}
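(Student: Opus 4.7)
The plan is to argue for contradiction by assuming a Weihrauch reduction ${\sf ATR}_2\leq_{\sf W}\ac{}$ via computable forward and backward functionals $h,k$, and then producing a computable linear order $\prec$ on $\N$ on which this reduction must fail. The argument will rely crucially on Harrington's unpublished theorem on jump hierarchies along pseudo-well-orderings, advertised in the abstract as the key tool, together with a careful diagonal construction of $\prec$.

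First I would review what $\ac{}$-solutions can look like. A computable instance of $\widehat{\Sigma^1_1\mbox{-}{\sf C}_\N}$ is a uniform sequence $(S_n)_{n\in\N}$ of nonempty $\Sigma^1_1$ subsets of $\N$. By $\Pi^1_1$-uniformization there is always a choice function $f\in\prod_n S_n$ that is hyperarithmetic in Kleene's $\mathcal{O}$, so the ``canonical'' solution has controlled hyperdegree. Harrington's theorem provides a sharper bound by tying such solutions to pseudo-hierarchies along suitable pseudo-well-orderings: the information encoded by the canonical $f$ never transcends what is already coded in an appropriate pseudo-jump-hierarchy $H^\ast$ along $\prec$.

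Next, I would pick $\prec$ to be a Harrison-style computable pseudo-well-ordering tailored to defeat $(h,k)$: its well-founded part has order type $\omCK$ and it has no hyperarithmetic infinite descending chain. Consider the $\ac{}$-instance $p=h(\prec)$ and the canonical solution $f$ supplied by Harrington's theorem. The output $k(\prec,f)$ must begin with either $0$ (followed by a jump hierarchy along $\prec$) or $1$ (followed by an infinite $\prec$-descending sequence). Using the Harrington bound on the hyperdegree of $f$, I would rule out each case: in the ``$1$''-case, no infinite descending chain is obtainable from $f$ because $\prec$ is arranged so that every descending chain in $\prec$ lies strictly above the available pseudo-hierarchy $H^\ast$; in the ``$0$''-case, I would diagonalize against the potential hierarchy output of $k$ during the construction of $\prec$, ensuring that the jump hierarchy reachable from $f$ disagrees with any hierarchy $k$ could commit to. This diagonalization should be organized as a finite-injury priority argument against an enumeration of candidate reductions.

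The main obstacle is handling both output branches simultaneously: since $k$ is allowed to commit to either branch depending on the input $f$, every $\ac{}$-solution to $h(\prec)$ must be successfully ruled out. This will require the full strength of Harrington's theorem, which pins down the hyperdegrees realized by $\ac{}$-solutions to those obtainable via pseudo-hierarchies, combined with a delicate choice of $\prec$ whose pseudo-hierarchies are ``misaligned'' with both any genuine jump hierarchy $k$ could produce and any $\prec$-infinite descending chain. The delicate balancing point is that the diagonalization defeating the ``$0$''-branch must still leave $\prec$ ill-founded with no hyperarithmetic descending chain, so that the ``$1$''-branch also remains unreachable from $\ac{}$-solutions; getting both simultaneously is where I expect the technical heart of the proof to lie.
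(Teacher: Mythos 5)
There is a genuine gap here, and it is located exactly where you say you expect the technical heart to lie: you never resolve the problem that the backward functional may commit to the hierarchy branch on some solutions and the descending-sequence branch on others. The property that resolves this --- and which your proposal never uses --- is \emph{homogeneity} of the instances of $\ac{}$: a computable instance is a product $\prod_n S_n$ of $\Sigma^1_1$ subsets of $\N$. The paper's proof exploits this as follows. Since $\ac{}$ is parallelizable, an assumed reduction yields a single homogeneous $\Sigma^1_1$ set $S$ with $\prod_e A_e\leq_M S$, where $A_e$ is the solution set of the $e$-th computable linear order. By continuity, the branch bit output for index $e$ is determined on a finite cylinder $[\sigma]$, and by homogeneity $S\cap[\sigma]\equiv_M S$; hence for \emph{every} $e$, $S$ Medvedev-bounds either the descending sequences or the jump hierarchies of $\prec_e$, with the choice independent of the particular solution. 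This makes the two ``failure'' index sets $B$ (descending sequences not bounded) and $C$ (hierarchies not bounded) disjoint $\Sigma^1_1$ sets with ${\sf WO}\subseteq B$ and, by H.~Friedman's theorem, ${\sf NPWO}\subseteq C$; effective Lusin separation then produces a $\Delta^1_1$ separator of ${\sf WO}$ from ${\sf NPWO}$, contradicting Harrington's theorem that any $\Sigma^1_1$ such separator is $\Sigma^1_1$-complete. Note that Harrington's result enters only at this last step, as a statement about separating ${\sf WO}$ from ${\sf NPWO}$; the version you invoke, bounding the hyperdegrees of ``canonical'' $\ac{}$-solutions by a pseudo-hierarchy, is not the theorem and is not available.

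Your fallback strategy --- pick one pseudo-well-ordering $\prec$ and rule out both branches by complexity bounds on a canonical solution $f$ --- cannot work even in principle, for two reasons. First, solutions to $\ac{}$ instances admit no useful hyperarithmetic ceiling: the paper exhibits computable instances of $\ac{cof}$ all of whose members dominate every $\Delta^1_1$ function, and Lemma \ref{lem:cpr-acfincof} shows $\ac{fin}\star\ac{cof}$ already computes all of $\dc{}$, hence ${\sf ATR}_2$. So any argument resting only on degree-theoretic bounds for solutions, rather than on the uniformity forced by homogeneity, proves too much and must fail. Second, the proposed diagonalization of $\prec$ against the hierarchy that $k$ ``commits to'' is circular as stated (the construction of $\prec$ must anticipate $k$'s behaviour on solutions of $h(\prec)$), and you give no mechanism --- recursion theorem or otherwise --- for carrying it out while keeping $\prec$ ill-founded with no hyperarithmetic descending sequence. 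The definability argument of the paper sidesteps all of this by never constructing a bad instance at all.
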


\begin{proof}
Let $A_e$ be the $e$-th computable instance of ${\sf ATR}_2$, that is, $0\fr H\in A(e)$ if and only if $H$ is a jump hierarchy for the $e$-th computable linear order $\prec_e$, and $1\fr p$ if $p$ is an infinite decreasing sequence w.r.t.\ $\prec_e$.
Suppose for the sake of contradiction that $A=\prod_eA_e$ is Medvedev reducible to a homogeneous $\Sigma^1_1$ set $S$.
Let $B$ be the set of all indices $e\in\N$ such that the set of all infinite decreasing sequences w.r.t.\ $\prec_e$ is not Medvedev reducible to $S$, and let $C$ be the set of all indices $e\in\N$ such that the set of all jump-hierarchies for $\prec_e$ is not Medvedev reducible to $S$.
Note that $B$ and $C$ are $\Sigma^1_1$.

Moreover, we claim that $B$ and $C$ are disjoint.
To see this, let $\Phi$ be a continuous function witnessing $A\leq_MS$.
If there is $X\in S$ such that $\Phi(X;0)$ is $i$, then by continuity of $\Phi$, there is a finite initial segment $\sigma$ of $X$ such that $\Phi(Y;0)=i$ for any $Y$ extending $\sigma$.
However, by homogeneity of $S$, $S\cap[\sigma]$ is Medvedev equivalent to $S$.
This means that, for any $e$, $S$ Medvedev bounds either the set of infinite paths or the set of jump-hierarchies for the $e$-th computable tree.
This concludes the claim.

Let ${\sf WO}$ be the set of all indices of well-orderings, and ${\sf NPWO}$ be the set of all indices for computable linear orderings with infinite hyperarithmetic decreasing sequences (i.e., linear orderings which are not pseudo-well-ordered).
Clearly, ${\sf WO}$ is contained in $B$.
Moreover, by H.\ Friedman's theorem \cite{Friedmanthesis} saying that a computable linear order which supports a jump hierarchy cannot have a hyperarithmetical descending sequence (see also Friedman \cite{Fri76} for a simpler proof based on Steel's result \cite{Ste75}), ${\sf NPWO}$ is contained in $C$.
Since $B$ and $C$ are disjoint $\Sigma^1_1$ sets, by an effective version of the Lusin separation theorem (cf.\ \cite[Exercise 4B.11]{MosBook}), there is a $\Delta^1_1$ set $A$ separating $B$ from $C$.
This contradicts Harrington's unpublished result, which states that if a $\Sigma^1_1$ set separates ${\sf WO}$ from ${\sf NPWO}$, then it must be $\Sigma^1_1$-complete (see Goh \cite[Corollary 3]{GohHarrington}).
\end{proof}

\begin{Lemma}\label{lem:cpr-acfincof}
$\dc{}\leq_{\sf W}\ac{fin}\star\ac{cof}$.
\end{Lemma}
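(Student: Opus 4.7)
The plan is to reduce $\Sigma^1_1\mbox{-}{\sf C}_{\N^\N}$ (which equals $\dc{}$ by Proposition~\ref{prop:depecho}) to $\ac{fin}\star\ac{cof}$ in two phases matching the compositional product. First I would use $\ac{cof}$ to obtain, for each node $\sigma$ of a pruned $\Sigma^1_1$ tree $T^*$ witnessing the input, a bound $n_\sigma$ on the least index of a child of $\sigma$ in $T^*$; then I would hand $\ac{fin}$ --- identified with $\Sigma^1_1\mbox{-}{\sf KC}_{\N^\N}$ by Proposition~\ref{prop:compact-prouduct-of-two} --- the resulting finitely branching pruned $\Sigma^1_1$ subtree, whose set of paths is a nonempty compact $\Sigma^1_1$ subset of the input set.

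In more detail, from the $\Sigma^1_1$ input I would uniformly extract a pruned $\Sigma^1_1$ tree $T^*\subseteq\baire$ with $[T^*]=A$, written as $T^*=\pi([W])$ for a $\Pi^0_1$ tree $W\subseteq\baire\times\baire$, with auxiliary computable tree $W_\sigma:=\{\rho:(\sigma,\rho)\in W\}$ satisfying $\sigma\in T^*$ iff $W_\sigma$ is ill-founded. The Phase~1 $\ac{cof}$-instance I would submit is
\[
C_\sigma=\{n:\exists i\le n,\ \sigma\fr i\in T^*\}\cup U_\sigma,
\]
where $U_\sigma\subseteq\N$ is a uniformly $\Sigma^1_1$ ``padding'' set, designed to be empty when $\sigma\in T^*$ and cofinite when $\sigma\notin T^*$, constructed from the $\Delta^1_1$-approximations of $T^*$ so that a value $n\in U_\sigma$ witnesses ``$\sigma$ has already dropped out of $T^*$ at some hyperarithmetic stage indexed by $n$''. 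With this design each $C_\sigma$ is a cofinite $\Sigma^1_1$ subset of $\N$ for every $\sigma$ (the first clause is cofinite by pruning when $\sigma\in T^*$, and the padding takes over otherwise), and a call to $\ac{cof}$ returns a sequence $(n_\sigma)_\sigma$.

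In Phase~2, using the $\ac{cof}$-solution together with the original input, I would supply $\ac{fin}$ with
\[
T'=\{\sigma\in T^*:\forall\tau\prec\sigma,\ \sigma(|\tau|)\le n_\tau\}.
\]
For any $\sigma\in T'$ all initial segments $\tau\preceq\sigma$ lie in $T^*$, so each $U_\tau$ is empty and $n_\tau$ must come from the child-bound clause of $C_\tau$; hence $\sigma$ admits a child in $T'$ of index at most $n_\sigma$. Thus $T'$ is a finitely branching pruned $\Sigma^1_1$ subtree of $T^*$, and $[T']\subseteq[T^*]=A$ is nonempty, compact, and $\Sigma^1_1$, so a single call to $\Sigma^1_1\mbox{-}{\sf KC}_{\N^\N}\equiv_{\sf W}\ac{fin}$ delivers the required element of~$A$.

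The hard part will be the engineering of the padding sets $U_\sigma$: the natural $\Pi^1_1$ predicate ``$W_\sigma$ is well-founded'' must be packaged as a uniformly $\Sigma^1_1$ cofinite subset of $\N$ in $\sigma$. The intended remedy is to replace the $\Pi^1_1$ well-foundedness statement by its $\Delta^1_1$ stage-approximations, using that the stage at which a well-founded $W_\sigma$ drops out of the approximation is itself a hyperarithmetic ordinal. The subtlety is that the ranks of well-founded $W_\sigma$ can be cofinal in $\omCK$, so one must construct a uniformly $\Delta^1_1$-in-$\sigma$ family of stage assignments that eventually exceeds the rank of \emph{every} non-extensible $\sigma$; obtaining such a family via the natural Kleene--Brouwer structure on $W_\sigma$ is the principal step requiring care.
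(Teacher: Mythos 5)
Your Phase~2 is fine, but the step you defer to the end --- engineering the padding sets $U_\sigma$ --- is not merely delicate; as specified it is provably impossible, and this is where the proof breaks. You ask for a uniformly $\Sigma^1_1$ family $(U_\sigma)_{\sigma\in\baire}$ with $U_\sigma=\emptyset$ whenever $\sigma\in T^*$ and $U_\sigma$ cofinite (in particular nonempty) whenever $\sigma\notin T^*$. If such a family existed, then $\{\sigma:\sigma\notin T^*\}=\{\sigma:\exists n\ n\in U_\sigma\}$ would be $\Sigma^1_1$, hence $T^*$ would be $\Delta^1_1$, and the leftmost path of the pruned $\Delta^1_1$ tree $T^*$ would be $\Delta^1_1$, giving $A=[T^*]$ a hyperarithmetic member. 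But the reduction must handle computably coded nonempty $\Sigma^1_1$ (indeed $\Pi^0_1$) closed sets with no hyperarithmetic member, such as the descending sequences of a Harrison order; for those instances no such $(U_\sigma)$ exists. The obstruction is exactly the one you flag at the end: by $\Sigma^1_1$-boundedness there is no $\Delta^1_1$ removal schedule $h\colon\N\to\omCK$ eventually exceeding the exit stage of every dead node. Note also that the natural weakening that would still make Phase~2 work --- each $C_\sigma$ cofinite, and $C_\sigma\subseteq\{n:\exists i\le n\ \sigma\fr i\in T^*\}$ for $\sigma\in T^*$ --- runs into the same wall: for $\sigma\notin T^*$ the children of $\sigma$ are verified out of $T^*$ at stages below (and possibly cofinal in) the stage at which $\sigma$ itself is verified out, so the evident co-enumerations empty $C_\sigma$ entirely rather than leaving it cofinite.

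The paper sidesteps node-indexed bounds altogether. It uses the fact that the leftmost path $f_T$ of a pruned $\Sigma^1_1$ tree admits a $\Delta^1_1$ approximation with finitely many mind changes, and builds a level-indexed $\ac{cof}$-instance: $S_n$ is co-enumerated by deleting one element each time the approximation to $f_T\upto{n+1}$ changes, so cofiniteness of $S_n$ is automatic from finiteness of the mind-change count $m_T(n)$ attached to the single distinguished path, rather than from a (nonexistent) uniform bound on the exit stages of all dead nodes. Any $g\in\prod_nS_n$ then majorizes $m_T\geq f_T$, so $T^g=\{\sigma\in T:(\forall n<|\sigma|)\ \sigma(n)<g(n)\}$ is an infinite, finitely branching $\Sigma^1_1(g)$ tree, and $\ac{fin}$ (as $\Sigma^1_1$-${\sf WKL}$) finishes exactly as in your Phase~2. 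Replacing your per-node child bounds by this per-level majorization of the leftmost path is the repair you need.
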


\begin{proof}
  Given a pruned $\Sigma^1_1$ tree $T\subseteq\om^{<\om}$, let $f_T$ be the leftmost path through $T$.
  Then $f_T$ has a finite-change higher approximation, i.e., there is a $\Delta^1_1$ sequence approximating $f$ with finite mind-changes (cf.\ \cite{BGM17} for the definition).
  Let $m_T(n)$ be the number of changes of the approximation procedure for $f_T\upto {n+1}$.
  One can assume that $f_T(n)\leq m_T(n)$.
  Then, one can effectively construct a $\Sigma^1_1$ sequence $(S_n)_{n\in\N}$ of cofinite subsets of $\N$ such that $m\in S_n$ implies $m>m_T(n)$.
  In particular, any element $g\in\prod_n S_n$ majorizes $m_T$, and thus $f_T$.
  Use $\ac{cof}$ to choose such a $g$, and consider the $\Sigma^1_1(g)$ tree $T^g=\{\sigma\in T:(\forall n<|\sigma|)\;\sigma(n)<g(n)\}$.
  Then $T^g$ is a finite branching infinite tree since $f_T\in[T^g]$.
  Therefore, as in the proof of Proposition \ref{prop:compact-prouduct-of-two}, one can effectively covert $T^g$ into a $\Sigma^1_1(g)$ infinite binary tree $T^\ast$.
  Use $\Sigma^1_1\mbox{-}{\sf WKL}$ (which is Weihrauch equivalent to $\ac{fin}$, as seen in Observation \ref{obs:choice} and Fact \ref{KMP}) to get an infinite path $p$ through $T^\ast$.
  From $p$ one can easily construct an infinite path through $T$.
\end{proof}

\begin{proof}[Proof of Theorem \ref{thm:main-theorem}]
Clearly, ${\sf ATR}_2\leq_{\sf W}\Sigma^1_1\mbox{-}{\sf C}_{\N^\N}$ since being a jump hierarchy and being an infinite decreasing sequence are arithmetical properties.
Since $\Sigma^1_1\mbox{-}{\sf C}_{\N^\N}$ is Weihrauch equivalent to $\dc{}$ by Proposition \ref{prop:depecho}, we obtain $\ac{}<_{\sf W}\dc{}$ by Lemma \ref{lem:main-separation}.
The equality follows from Lemma \ref{lem:cpr-acfincof}.
\end{proof}

\subsection{Summary of this section}

\begin{Theorem}
  We have \[\ac{fin}\equiv_W\dc{fin}<_W\ac{aof}\equiv_W\dc{aof}<_W\ac{foc}\leq_W\ac{}\]
  and
  \[\ac{cof}\not\geq_W\ac{fin}\qquad\ac{foc}>_W\ac{cof}\not\leq_W\ac{aof}\]
\end{Theorem}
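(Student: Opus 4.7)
The plan is to assemble this summary statement directly from the results already established in the previous subsections; no new argument is required, only careful bookkeeping of which prior lemma or corollary supplies each relation. I treat the main chain first, then the second line of (in)equalities.

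For the chain
$\ac{fin}\equiv_W\dc{fin}<_W\ac{aof}\equiv_W\dc{aof}<_W\ac{foc}\leq_W\ac{}$,
I would argue as follows. The equivalence $\ac{fin}\equiv_W\dc{fin}$ is the first Theorem of the $\ac{fin}/\dc{fin}$ subsection, itself an immediate consequence of Fact~\ref{KMP} and Proposition~\ref{prop:compact-prouduct-of-two}. The strict inequality $\dc{fin}<_W\ac{aof}$ is the Corollary combining Theorem~\ref{th:aof-sup-fin} (which produces, for each computable $\ac{fin}$-instance, a strictly $M$-stronger $\ac{aof}$-instance) with Theorem~\ref{th:fin-maximum} (which supplies a maximum in $\ac{fin}$, so that Weihrauch comparison against it reduces to Medvedev comparison). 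The equivalence $\ac{aof}\equiv_W\dc{aof}$ is the Corollary of Theorem~\ref{th:equ-ac-dc-aof}. For $\ac{aof}<_W\ac{foc}$ I first note $\ac{aof}\leq_W\ac{foc}$ trivially, since every ``all or finite'' set is ``finite or cofinite'' ($\N$ itself is cofinite); strictness then follows because $\ac{cof}\leq_W\ac{foc}$ trivially, whereas $\ac{cof}\not\leq_W\ac{aof}$ by the Corollary of Theorem~\ref{th:ABDelta11path}, so $\ac{foc}\leq_W\ac{aof}$ would contradict that. Finally, $\ac{foc}\leq_W\ac{}$ is immediate from the definition of the restriction.

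For the second line, the non-reduction $\ac{cof}\not\geq_W\ac{fin}$ (that is, $\ac{fin}\not\leq_W\ac{cof}$) is exactly the second half of the Corollary following Theorem~\ref{th:ABDelta11path}, which exploits the existence of homogeneous finitely branching $\Sigma^1_1$ trees with no $\Delta^1_1$ branch. The strict inequality $\ac{foc}>_W\ac{cof}$ follows from the trivial reduction $\ac{cof}\leq_W\ac{foc}$ together with strictness: since $\ac{fin}\leq_W\ac{foc}$ trivially, the assumption $\ac{foc}\leq_W\ac{cof}$ would entail $\ac{fin}\leq_W\ac{cof}$, contradicting the previous item. Finally, $\ac{cof}\not\leq_W\ac{aof}$ is the first half of the same Corollary of Theorem~\ref{th:ABDelta11path}.

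There is no genuine obstacle: all the work has already been carried out in the preceding subsections. The only care required is to spell out the three trivial reductions ($\ac{aof}\leq_W\ac{foc}$, $\ac{cof}\leq_W\ac{foc}$, $\ac{fin}\leq_W\ac{foc}$) and the existence of a maximum in $\ac{fin}$ explicitly, so that every strict inequality is visibly inherited from an irreducibility result rather than conflated with a mere upper-bound statement.
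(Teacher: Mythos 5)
Your assembly is correct and matches what the paper intends: the theorem is a pure summary, and every relation you cite is supplied by the result you name (the equivalences by Fact~\ref{KMP}/Proposition~\ref{prop:compact-prouduct-of-two} and Theorem~\ref{th:equ-ac-dc-aof}, the strictness of $\ac{fin}<_W\ac{aof}$ by playing Theorem~\ref{th:aof-sup-fin} against the maximum from Theorem~\ref{th:fin-maximum}, and the two non-reductions by the corollary following Theorem~\ref{th:ABDelta11path}, whose first half ultimately rests on Theorem~\ref{th:cof-not-<-aof} together with the maximum from Theorem~\ref{th:max-aof}). Your derivation of the remaining strict inequalities $\ac{aof}<_W\ac{foc}$ and $\ac{cof}<_W\ac{foc}$ from the trivial inclusions plus those non-reductions is exactly the intended bookkeeping.
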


It remains a few questions about $\ac{foc}$:
\begin{Question}
  Is $\ac{foc}<_W\dc{foc}$? Is $\ac{foc}<_W\ac{}$?
\end{Question}

We also do not know if the dependent and independent choice for cofinite sets coincide.
\begin{Question}
  Is $\ac{cof}<_W\dc{cof}$?
\end{Question}

We solved the main question by showing $\ac{}<_W\dc{}$ (Theorem \ref{thm:main-theorem}), but it is just a computable separation.
Therefore, it is natural to ask if $\ac{}$ and $\dc{}$ can be separated even in the hyperarithmetical sense.
In other words, the following is one of the most important open questions, where ${\sf UC}_{\N^\N}$ is the unique choice principle (or equivalently, the choice principle for $\Sigma^1_1$ singletons; cf.\ \cite{KMP}).

\begin{Question}
  Is ${\sf UC}_{\N^\N}\star\ac{}<_W\dc{}$?
\end{Question}

We also ask a question purely on the structure of Medvedev degrees for finite axioms of choice. Define more generally $\ac{P}$ to be $\ac{}$ where the set from which we choose have to be taken from $\sf P$. For instance, if ${\sf P}=\{A\subseteq\N:|A|<\om\}$, then $\ac{P}=\ac{fin}$.

\begin{Question}
  Let ${\sf P}=\{A\subseteq\N:A\subseteq2\}$ and ${\sf Q}=\{A\subseteq\N:|A|\leq2\}$. Is every element of $\ac{Q}$ Medvedev equivalent to some element of $\ac{P}$?
\end{Question}

We are also interested in comparing various kinds of arithmetical transfinite recursion.

\begin{Question}
${\sf ATR}_{\sf 2}\equiv^a_{\sf W}{\sf ATR}_{\sf 2'}\equiv^a_{\sf W}{\sf ATR}_{\sf 2}^{\sf po}$?
\end{Question}

Finally, we mention a few descriptive set theoretic results deduced from our results.

\begin{Theorem}
\begin{enumerate}
\item There is a total analytic set $A$ with compact homogeneous sections such that any total analytic set with compact sections is $\leq_1$-reducible to $A$.
\item For any total analytic set $A$ with closed sections, there is a total analytic set with homogeneous sections which is not $\leq_2$-reducible to $A$.
\item There is a total $F_{\sigma\delta}$ set with $G_\delta$ sections which is not $\equiv_2$-equivalent to any analytic set with closed sections.
\item There is a total closed set which is not $\leq_2$-reducible to any total analytic set with homogeneous sections.
\end{enumerate}
\end{Theorem}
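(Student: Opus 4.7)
All four statements are descriptive-set-theoretic reformulations of results already established in the paper. The unifying dictionary is as follows: a total analytic set $A\subseteq X\times Y$ whose sections lie in some class $\mathcal{R}$ codes a parameterised family of instances of the $\Sigma^1_1$-choice principle restricted to $\mathcal{R}$; the topological reducibilities $\leq_1$ and $\leq_2$ are exactly the ``parameterised'' (continuous, non-effective) analogues of strong Weihrauch $\leq\sW$ and Weihrauch $\leq\W$; and any Medvedev reduction uniform in the parameter yields the corresponding continuous reduction between the product-space sets. Since every earlier theorem relativises uniformly in an oracle, each clause here reduces to matching it with the correct earlier result.

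I would then argue the clauses as follows. For (1), the maximum Medvedev degree in $\ac{fin}$ given by Theorem~\ref{th:fin-maximum} is realised by a compact homogeneous $\Sigma^1_1$ set $A^\ast$; unfolded with an extra parameter it is a total analytic set with compact homogeneous sections, and every total analytic set with compact sections is a uniformly $\Sigma^1_1$ family of compact $\Sigma^1_1$ sets, and hence Medvedev-below $A^\ast$ uniformly, yielding the $\leq_1$-reduction. For (2), Theorem~\ref{th:upper-density-strong} produces, for every $A\in\dc{}$, a homogeneous $B\in\ac{}$ with $B\not\leq_M A$; its construction is uniform in any parameter coding $A$, so for each total analytic set with closed sections we obtain a total analytic set with homogeneous sections failing to $\leq_2$-reduce to it. For (3), take an $\ac{aof}$-type instance whose graph has the required $F_{\sigma\delta}$ description with $G_\delta$ sections (each coordinate being finite or all of $\N$, described as a $\Sigma^1_1$-approximation from above); by Proposition~\ref{prop:aof-domination} every solution computes a function dominating all $\Delta^1_1$ functions, while Lemma~\ref{lemma:higher-hyperimmunefree} and its underlying idea show that suitable analytic sets with closed sections admit solutions avoiding such domination, so the two cannot be $\equiv_2$-equivalent; this is the descriptive-set-theoretic content of Corollary~\ref{cor:aof-versus-sigma-compact}. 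Finally, for (4), the graph of ${\sf ATR}_2$ is an arithmetical (in particular closed) total subset of the product space, and by Lemma~\ref{lem:main-separation}, based on Harrington's theorem on the non-$\Sigma^1_1$-separation of ${\sf WO}$ from ${\sf NPWO}$, it fails to $\leq_W$-reduce to any $\ac{}$-instance; relativising yields the desired topological statement.

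The principal obstacle is to make the relativisation transparent. One needs to verify that every total analytic set whose sections lie in the prescribed class arises, up to continuous equivalence, from a parameterised family of $\Sigma^1_1$ instances, and that the separating witnesses of Theorem~\ref{th:upper-density-strong} and Lemma~\ref{lem:main-separation} go through uniformly in arbitrary continuous parameters. These are careful but routine uniformisation-of-codes arguments, and they require no new ideas beyond those already used in Sections~2--3. Clause (3) additionally requires attention to the precise descriptive class of the witness, which is where the $F_{\sigma\delta}$-and-$G_\delta$-sections specification must be checked against the coding of the $\ac{aof}$-instance.
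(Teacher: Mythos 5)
Your clauses (1) and (2) coincide with the paper's proof (relativizations of Theorem~\ref{th:fin-maximum} and Theorem~\ref{th:upper-density-strong}), but clause (3) contains a genuine error. The witness you propose -- a parameterized family of all-or-finite choice instances -- has sections of the form $\prod_n A_n$ with each $A_n\subseteq\N$ finite or equal to $\N$; such a product is a \emph{closed} subset of $\Baire$, and the family itself is Borel, hence analytic. So your candidate is itself a total analytic set with closed sections and is trivially $\equiv_2$-equivalent to one. Moreover, the domination argument you invoke cannot be repaired to work here: Lemma~\ref{lemma:higher-hyperimmunefree} is a hyperimmune-free basis theorem whose proof uses compactness essentially, and it only separates $\ac{aof}$ from choice for compact and $\sigma$-compact sections (Corollary~\ref{cor:aof-versus-sigma-compact}). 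General closed analytic sets can consist entirely of $\Delta^1_1$-dominant elements -- the paper's own $\ac{cof}$ examples are closed sets of exactly this kind -- so no domination property distinguishes $G_\delta$ sections from closed ones. The paper's actual witness is $S=\{(x,y):y\not\leq_Tx\}$, which is $F_{\sigma\delta}$ with co-countable (hence $G_\delta$, and genuinely non-closed) sections. The point is a Medvedev-minimality phenomenon: if $P<_M^xS(x)$ and $P$ has no $x$-computable member, then $P\subseteq S(x)$ and the identity gives $S(x)\leq_M^xP$, a contradiction; so nothing lies strictly $\leq_M^x$-between $\Baire$ and $S(x)$. On the other hand, an $\equiv_2$-equivalence would make $S(x)$ $x$-Medvedev equivalent to a $\Sigma^1_1(x)$ closed set, and the downward density theorem (Theorem~\ref{thm:lower-density}), relativized, produces a $\Sigma^1_1(x)$ closed set strictly between $\Baire$ and $S(x)$. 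This interplay is entirely absent from your argument.

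Clause (4) uses the right key lemma (Lemma~\ref{lem:main-separation} via Harrington's theorem), but your assertion that the graph of ${\sf ATR}_2$ is ``arithmetical (in particular closed)'' is false: being a jump hierarchy is a $\Pi^0_2$-type condition, so the sections of ${\sf ATR}_2$ are arithmetical but not closed, and the witness as you state it does not satisfy the hypothesis ``total closed set.'' The paper closes this gap with the additional (routine but necessary) step that every $\Sigma^1_1$ set is uniformly Medvedev reducible to a closed set via unfolding, and takes that closed unfolding as the witness; any $\leq_2$-reduction of it to a total analytic set with homogeneous sections would then relativize to a Weihrauch reduction of ${\sf ATR}_2$ to $\ac{}$, contradicting Lemma~\ref{lem:main-separation}. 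You should add that unfolding step explicitly; as written, (4) establishes the statement only for an arithmetical total set rather than a closed one.
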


\begin{proof}
(1) follows from the relativization of Theorem \ref{th:fin-maximum}.
(2) follows from the relativization of Theorem \ref{th:upper-density-strong}.
For (3), let $S$ be the set of pairs $(x,y)$ with $y\not\leq_Tx$.
Then $S$ is $F_{\sigma\delta}$, and each $S(x)=\{y:y\not\leq_Tx\}$ is co-countable; hence $G_\delta$.
Suppose that $S$ is $\equiv_2$-equivalent to an analytic set $A$ with closed sections.
In particular, there are $x$-computable functions $h_0,h_1$ such that $S(x)\leq_M^xA(h_0(x))\leq_M^xS(h_1\circ h_0(x))\leq_M^xS(x)$, where $\leq_M^x$ indicates the Medvedev reducibility relative to $x$.
Then we have $S(x)\equiv^x_MA(h_0(x))$.
By relativizing Theorem \ref{thm:lower-density}, there exists a $\Sigma^1_1(x)$ closed set $\om^\om<_M^xB<_M^xS(x)$, which is impossible.
Finally, (4) follows from the relativization of Lemma \ref{lem:main-separation}, and the fact that every $\Sigma^1_1$ set is Medvedev reducible to a closed set in a uniform manner.
\end{proof}

\begin{Ack}
Kihara's research was partially supported by JSPS KAKENHI Grant 17H06738, 15H03634, and the JSPS Core-to-Core Program (A. Advanced Research Networks). Angles d'Auriac's was a Summer Program Fellow of the Japan
Society for the Promotion of Science. He would like to thank JSPS for funding this program.
\end{Ack}

\bibliographystyle{plain}
\bibliography{references}

\begin{thebibliography}{10}

\bibitem{BGM17}
Laurent Bienvenu, Noam Greenberg, and Benoit Monin.
\newblock Continuous higher randomness.
\newblock {\em J. Math. Log.}, 17(1):1750004, 53, 2017.

\bibitem{paulybrattka5}
Vasco Brattka, Guido Gherardi, Rupert H\"olzl, Hugo Nobrega, and Arno Pauly.
\newblock Borel choice.
\newblock in preparation.

\bibitem{pauly-handbook}
Vasco Brattka, Guido Gherardi, and Arno Pauly.
\newblock Weihrauch complexity in computable analysis.
\newblock arXiv 1707.03202, 2017.

\bibitem{paulybrattka4}
Vasco Brattka and Arno Pauly.
\newblock On the algebraic structure of {W}eihrauch degrees.
\newblock arXiv 1604.08348, 2016.

\bibitem{densityPi01}
Douglas Cenzer and Peter~G. Hinman.
\newblock Density of the {M}edvedev lattice of {$\Pi^0_1$} classes.
\newblock {\em Arch. Math. Logic}, 42(6):583--600, 2003.

\bibitem{Friedmanthesis}
Harvey Friedman.
\newblock {\em Subsystems of analysis}.
\newblock PhD thesis, Massachusetts Institute of Technology, ProQuest LLC, Ann
  Arbor, MI, 1968.

\bibitem{Fri76}
Harvey Friedman.
\newblock Uniformly defined descending sequences of degrees.
\newblock {\em J. Symbolic Logic}, 41(2):363--367, 1976.

\bibitem{GohHarrington}
Jun~Le Goh.
\newblock On {H}arrington's unpublished result, 2018.
\newblock in preparation.

\bibitem{Goh}
Jun~Le Goh.
\newblock Some computability-theoretic reductions between principles around
  {${\rm ATR}_0$}, 2018.
\newblock in preparation.

\bibitem{Gre}
Vassilios Gregoriades.
\newblock Classes of {P}olish spaces under effective {B}orel isomorphism.
\newblock {\em Mem. Amer. Math. Soc.}, 240(1135):vii+87, 2016.

\bibitem{KechrisBook}
A.~Kechris.
\newblock {\em Classical Descriptive Set Theory}.
\newblock Springer-Verlag, 1995.

\bibitem{KMP}
Takayuki Kihara, Albert Marcone, and Arno Pauly.
\newblock Searching for an analogue of {${\rm ATR}_0$} in the {W}eihrauch
  lattice, 2018.
\newblock in preparation.

\bibitem{pauly-kihara2-mfcs}
Takayuki Kihara and Arno Pauly.
\newblock {Dividing by Zero -- How Bad Is It, Really?}
\newblock In Piotr Faliszewski, Anca Muscholl, and Rolf Niedermeier, editors,
  {\em 41st Int.~Sym.~on Mathematical Foundations of Computer Science (MFCS
  2016)}, volume~58 of {\em Leibniz International Proceedings in Informatics
  (LIPIcs)}, pages 58:1--58:14. Schloss Dagstuhl, 2016.

\bibitem{Medvedev}
Yu.~T. Medvedev.
\newblock Degrees of difficulty of the mass problem.
\newblock {\em Dokl. Akad. Nauk SSSR (N.S.)}, 104:501--504, 1955.

\bibitem{MosBook}
Yiannis~N. Moschovakis.
\newblock {\em Descriptive set theory}, volume 155 of {\em Mathematical Surveys
  and Monographs}.
\newblock American Mathematical Society, Providence, RI, second edition, 2009.

\bibitem{SOSOA:Simpson}
S.~Simpson.
\newblock {\em Subsystems of Second Order Arithmetic}.
\newblock Perspectives in Logic. Cambridge University Press, 2009.

\bibitem{Ste75}
John Steel.
\newblock Descending sequences of degrees.
\newblock {\em J. Symbolic Logic}, 40(1):59--61, 1975.

\bibitem{Ste80}
John~R. Steel.
\newblock A note on analytic sets.
\newblock {\em Proc. Amer. Math. Soc.}, 80(4):655--657, 1980.

\bibitem{Tan03}
Kazuyuki Tanaka.
\newblock The hyperarithmetical axiom of choice {HAC} and its companions.
\newblock {\em S\={u}rikaisekikenky\={u}sho K\={o}ky\={u}roku}, (1301):79--83,
  2003.
\newblock Sequent calculi and proof theory (Japanese) (Kyoto, 2002).

\end{thebibliography}
\end{document}